\documentclass[11pt,a4paper]{amsart}
\usepackage{enumitem,xcolor,graphicx,hyperref}\usepackage[cp1250]{inputenc}
\usepackage{ifthen,amssymb,amsmath,subfig}\usepackage{amsthm}\usepackage{amsfonts}

\newtheorem{thm}{Theorem}[section]
\newtheorem{cor}[thm]{Corollary}
\newtheorem{lem}[thm]{Lemma}
\newtheorem{prop}[thm]{Proposition}
\theoremstyle{definition}
\newtheorem{defn}[thm]{Definition}

\numberwithin{equation}{section}
\DeclareMathOperator{\Arg}{\rm Arg}
\textwidth15.5cm                            \textheight22.5cm
\oddsidemargin-5pt                         \evensidemargin-5pt
\allowdisplaybreaks
\newcommand{\abs}[1]{\left\vert#1\right\vert}%
\newcommand{\myb}[2][0cm]{\mathopen{}\left[#2\parbox[h][#1]{0cm}{}\right]}
\newcommand{\myp}[2][0cm]{\mathopen{}\left(#2\parbox[h][#1]{0cm}{}\right)}
\newcommand{\set}[1]{\left\{#1\right\}}%

\begin{document}

\title[Analytic representation of the generalized Pascal snail and its applications]
{Analytic representation of the generalized Pascal snail and its applications}

\author[S. Kanas and V. S. Masih ]{S. Kanas$^{1}$ and V. S. Masih$^{2}$}

\address{$^{1}$University of Rzeszow, Al. Rejtana 16c, PL-35-959 Rzesz\'{o}w, Poland}
\email{skanas@ur.edu.pl}
\address{$^{2}$Department of Mathematics, Payame Noor University, Tehran, Iran}
\email{masihvali@gmail.com; v\_soltani@pnu.ac.ir}
\thanks{$^{1}$Corresponding author}
\subjclass{30C80, 30C45}
\keywords {domain bounded by the generalized Pascal snail, Booth leminiscate, conchoid of the Sluze, Pascal snail, univalent functions, applied mathematics, starlike and convex functions}

\begin{abstract}
We find an unifying approach to the analytic representation of the domain bounded by a generalized Pascal snail. 
Special cases as Pascal snail, Both leminiscate,  conchoid of the Sluze and a disc are included. 
The behavior of  functions related to generalized Pascal snail are demonstrated.
\end{abstract}
\maketitle

\section{The analytic representation of a Pascal snail}\label{sec:3}
For $-1 \le \alpha\le 1$, $-1\le \beta \le 1$, $\alpha\beta\neq \pm 1$, and $0\le \gamma <1$ let $\mathfrak{L}_{\alpha,\beta,\gamma}$ denote the complex valued mapping
\begin{equation}\label{funS}
\mathfrak{L}_{\alpha,\beta,\gamma}(z)=\frac{(2-2\gamma)z}{\myp{1-\alpha z}\myp{1-\beta z}}=\sum_{n=1}^{\infty}B_n
z^n=\left\{\begin{array}{lc}
(2-2\gamma)\displaystyle\sum_{n=1}^{\infty}\myp{\dfrac{\alpha^n-\beta^n}{\alpha-\beta}}z^n,\quad &\alpha\ne \beta;\\[1em]
(2-2\gamma)\displaystyle\sum_{n=1}^{\infty}n\alpha^{n-1}z^n,\quad& \alpha=\beta,
\end{array}\right.
\end{equation}
where $z\in \mathbb{D}=\{z\in\mathbb{C}:\  |z|< 1\}$.  We note that $\mathfrak{L}_{\alpha,\beta,\gamma}$ maps  $\mathbb{D}$ onto a domain $\mathfrak{D}(\alpha,\beta,\gamma)$ whose boundary is a  given by
\begin{multline*}
\partial\mathfrak{D}(\alpha,\beta,\gamma)=\Biggl\{w=u+i v\colon
 \frac{\myp{2(1-\gamma)u+(\alpha+\beta)(u^2+v^2)}^2}{(1+\alpha\beta)^2}+\frac{4(1-\gamma)^2v^2}{(1-\alpha\beta)^2}-(u^2+v^2)^2=0\Biggr\}.
 \end{multline*}
Indeed, for $z=e^{i \theta}$, with  $\theta\in[0,2\pi)$,  we obtain
\begin{align}
\mathfrak{L}(z):=\frac{z}{\myp{1-\alpha z}\myp{1-\beta z}}&=\frac{e^{i\theta}\myp{1-\alpha e^{-i \theta}}\myp{1-\beta e^{-i\theta}}}{\left|1-\alpha e^{i \theta}\right|^2\left|1-\beta e^{i \theta}\right|^2}\notag\\[0.5em]
&=\frac{\myp{1+\alpha\beta}\cos\theta-(\alpha+\beta)+i\myp{1-\alpha\beta}\sin\theta}{(1+\alpha^2-2\cos\theta)\myp{1+\beta^2-2\beta\cos\theta}},\ -1\le\alpha,\beta\le 1.\label{eq_funT_bou}
\end{align}
\color{black}Let
$u=u(\theta)=\Re\set{\mathfrak{L}\myp{e^{i \theta}}}$ and  $v=v(\theta)=\Im\set{\mathfrak{L}\myp{e^{i\theta}}}$.
Then
\begin{equation}\label{ReImS}
u=\frac{(1+\alpha\beta)\cos\theta-(\alpha+\beta)}{(1+\alpha^2-2\alpha\cos\theta)\myp{1+\beta^2-2\beta\cos\theta}}, \quad v=\frac{\myp{1-\alpha\beta}\sin\theta}{(1+\alpha^2-2\alpha\cos\theta)\myp{1+\beta^2-2\beta\cos\theta}}.\end{equation}
Hence,  $u,v$ satisfy the  equation
\begin{equation}\label{Pascal1}
 \frac{\myp{u+(\alpha+\beta)(u^2+v^2)}^2}{(1+\alpha\beta)^2}+\frac{v^2}{(1-\alpha\beta)^2}-(u^2+v^2)^2=0 \quad (\alpha\beta\neq\pm 1).
 \end{equation}
Therefore $\mathfrak{L}_{\alpha,\beta,\gamma}$ maps the unit circle onto a curve (cf. \cite{KT1, KT2})
\begin{equation}\label{Pascal2}
 \frac{\myp{2(1-\gamma)u+(\alpha+\beta)(u^2+v^2)}^2}{(1+\alpha\beta)^2}+\frac{4(1-\gamma)^2}{(1-\alpha\beta)^2}v^2=(u^2+v^2)^2\quad (\alpha\beta\neq\pm 1),
\end{equation}
or
\begin{equation}\label{Pascal3}
\left(u^2+v^2-\dfrac{2(1-\gamma)(\alpha+\beta)}{(1-\alpha^2)(1-\beta^2)}u\right)^2=\dfrac{4(1-\gamma)^2(1+\alpha\beta)^2}{(1-\alpha^2)^2(1-\beta^2)^2}
u^2+\frac{4(1-\gamma)^2(1+\alpha\beta)^2}{(1-\alpha^2)(1-\beta^2)(1-\alpha\beta)^2}v^2,
\end{equation}
that is generalization of \emph{the Pascal snail}  (see Fig.~\ref{Fig1} and Fig~\ref{Snail12}).

The  wide applications of the Pascal snail have been known since their description; the newest ones rely on the application to figure the path of airflow around object like plane wings, in the design of race and train tracks but also in cryptography for selecting the points of the curve (ellipse, leminiscate, etc.)  over the prime fields. Also, the leminiscate are used in the construction of grids on irregular regions  in the development of software for numerically solving partial differential equations.  Very recently a method based on leminiscate  is applied for meander like regions and rely on  covering the region with  sectors bounded by two confocal leminiscate and two arcs  orthogonal to the Pascal snail (cf. \cite{LCP}).

In this paper we  will deal with the Pascal snail \eqref{Pascal2} or \eqref{Pascal3} and its analytical representation. Also, we will discuss the special cases of \eqref{Pascal2} or \eqref{Pascal3} which give some interesting curves.\medskip
\begin{figure}[!ht]
\centering
\subfloat[$\alpha=-0.4,\beta= 0.9, \gamma=0.93$]{%
\includegraphics[width=0.4\textwidth]{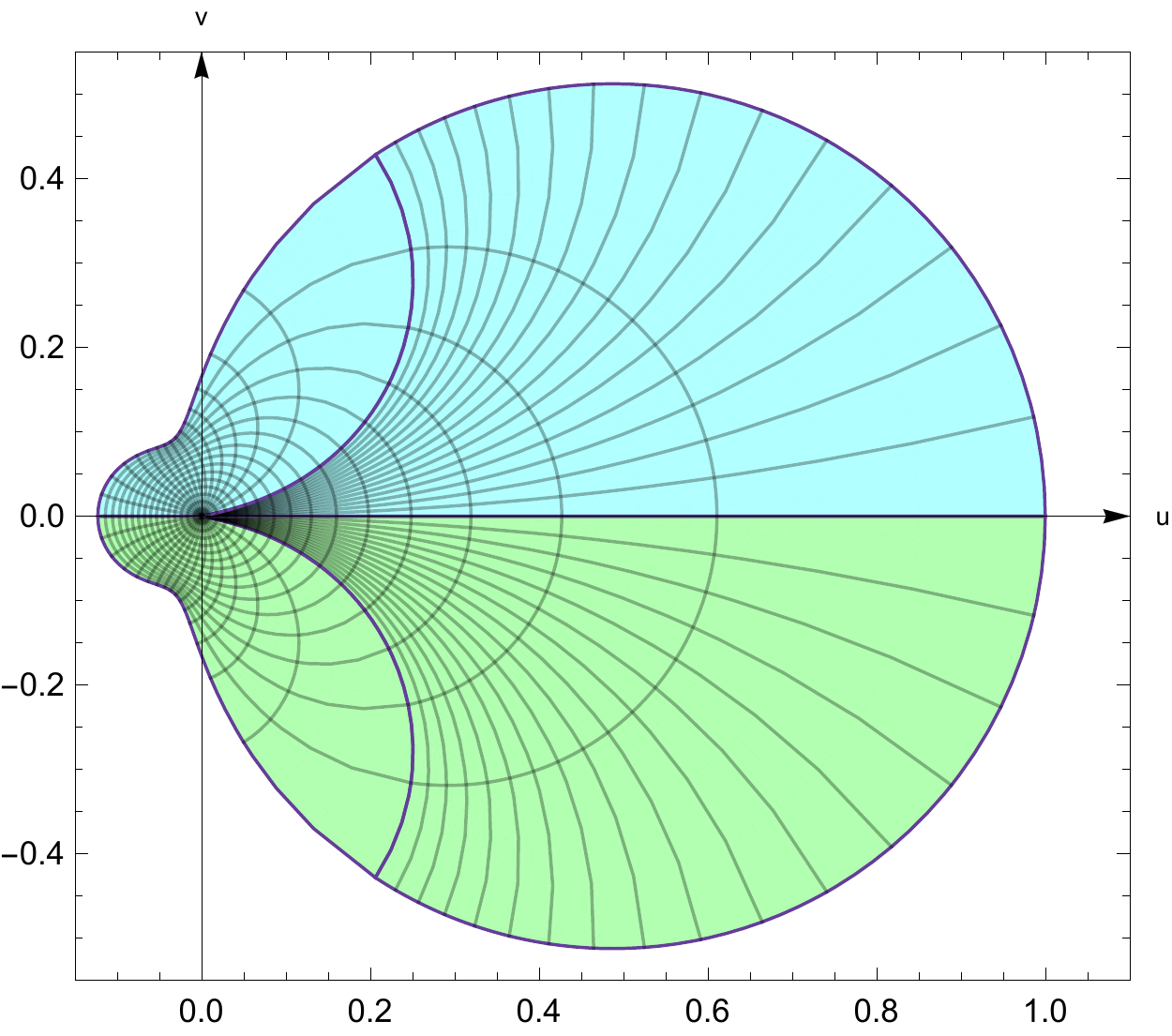}}%
\qquad\quad
\subfloat[$\alpha=-0.9, \beta=0.4, \gamma=0.93$]{%
\includegraphics[width=0.432\textwidth]{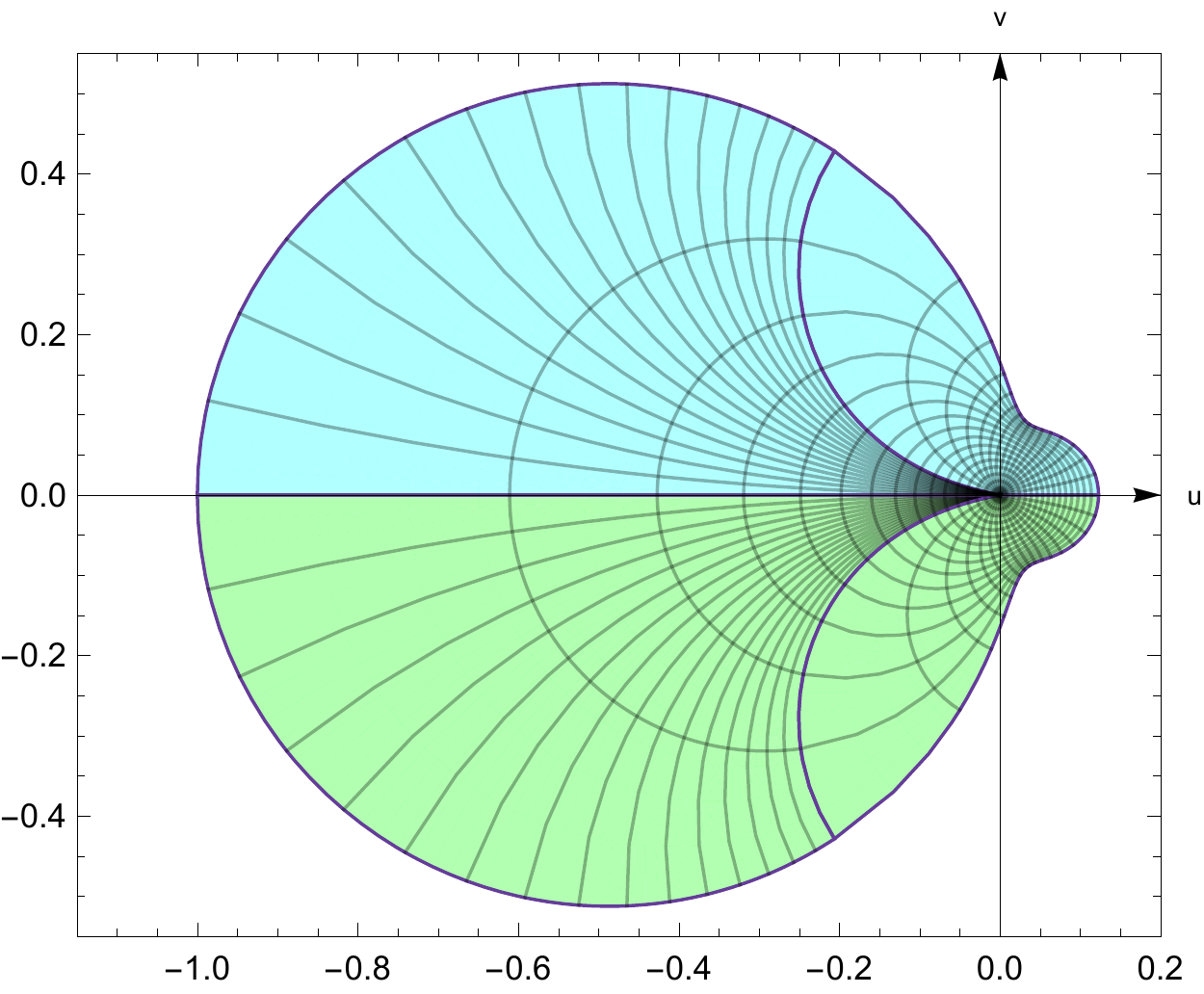}}%
\caption{The image of  $\mathbb{D}$ under $\mathfrak{L}_{\alpha,\beta,\gamma}(z)$}\label{Fig1}
\end{figure}
\begin{figure}[!ht]
\centering
\subfloat[$\alpha=-0.4,\beta=-0.9, \gamma=0.93$]{%
\includegraphics[width=0.4\textwidth]{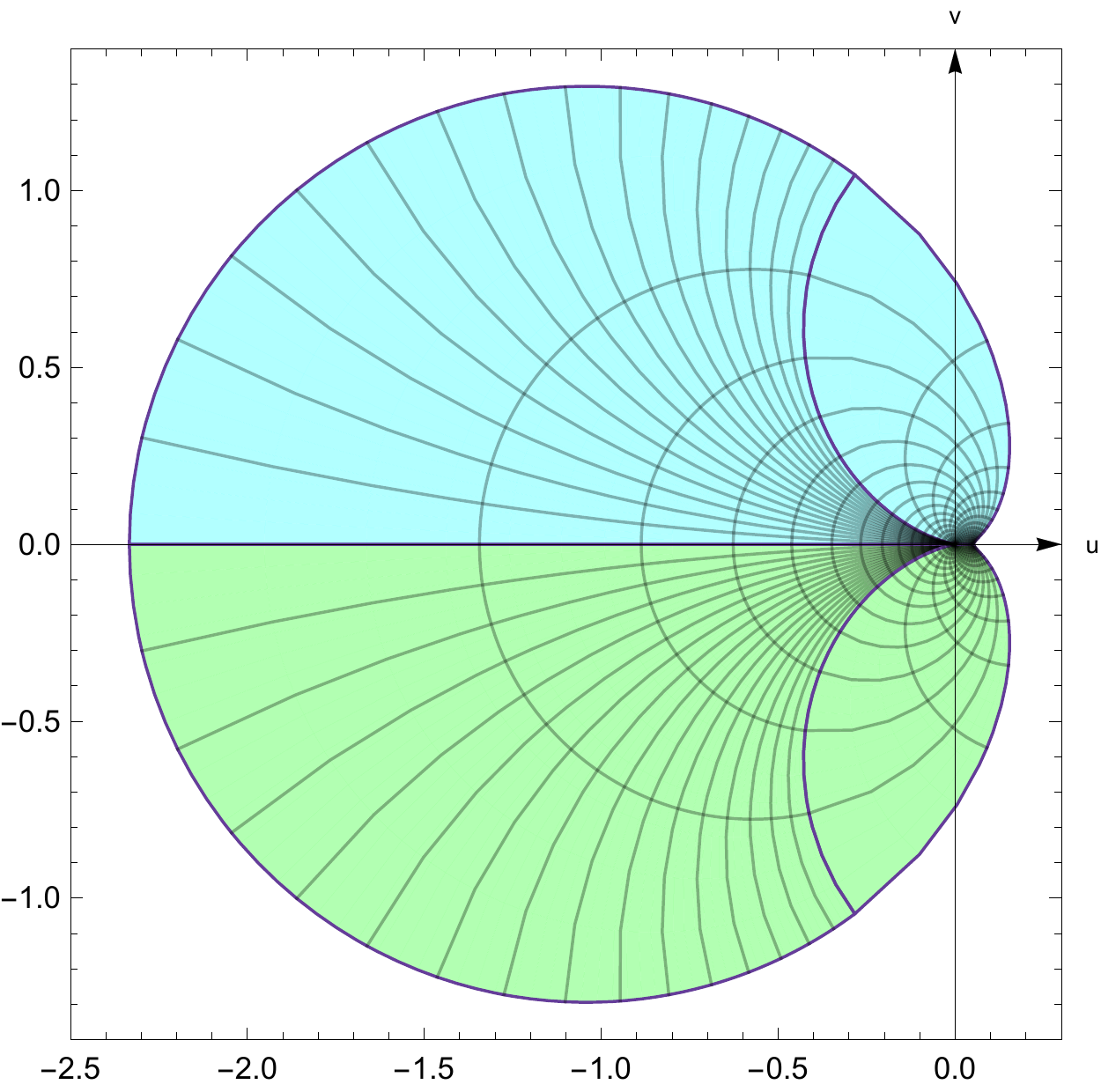}}%
\qquad\quad
\subfloat[$\alpha=0.9, \beta=0.4, \gamma=0.93$]{%
\includegraphics[width=0.4\textwidth]{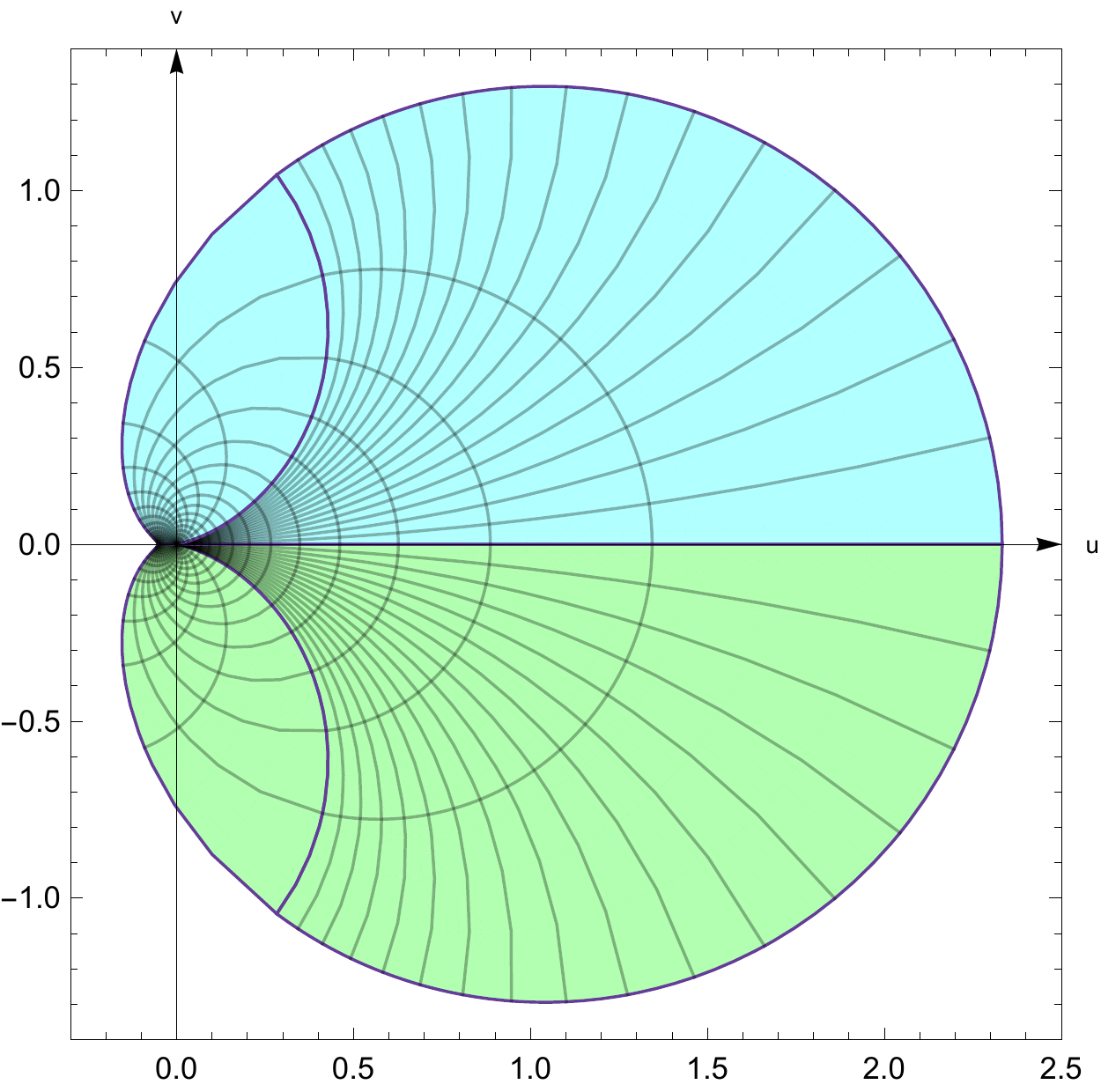}}%
\caption{The image of  $\mathbb{D}$ under $\mathfrak{L}_{\alpha,\beta,\gamma}(z)$}\label{Snail12}
\end{figure}
Let us  consider individual cases separately. By a symmetry, from now on we make the assumption: $\beta \ge \alpha$, unless otherwise stated.\medskip

\subsection{Circular domains} We get a circle for the case, when one of the parameter $\alpha$ or $\beta$ is zero, and the second is in the interval $(-1,1)$. Let  $\alpha=0<\beta < 1$. Then  $\mathfrak{L}_{0,\beta,\gamma}$ has the form $$\mathfrak{L}_{0,\beta,\gamma}(z)=\frac{2(1-\gamma)z}{1-\beta\, z},$$ and  $\mathfrak{L}_{0,\beta,\gamma}(\mathbb{D})$ is a circular domain
\begin{equation*}
\mathfrak{D}(0,\beta,\gamma)=\left\{w\in \mathbb{C}:\ \left|\frac{w}{2(1-\gamma)}-\frac{\beta}{1-\beta^2}\right|< \frac{1}{1-\beta^2}\right\}.
\end{equation*}

For the case $\alpha=\beta=0$ a curve $\mathfrak{D}(0,0,\gamma)$ is a circle $|w|<2(1-\gamma)$.

\subsection{Halfplane}  For the case when $\alpha = 0$ and $\beta =1$ the domain $\mathfrak{L}(\mathbb{D})$ is the halfplane $\Re w > \gamma-1$.
The case $\beta = 0, \alpha =-1$ gives a halfplane $\Re w < 1-\gamma$ which is not the ones of interest to us.

\subsection{Pascal snail regions}\label{Pascal snail regions} In the case $\beta =\alpha \in (-1,1)\setminus\{0\}$ the function $\ \mathfrak{L}_{\alpha,\alpha,\gamma}$ becomes
\begin{equation}\label{Lim1}
\mathfrak{L}_{\alpha,\alpha,\gamma}(z)=\frac{2(1-\gamma)z}{(1-\alpha z)^2},
\end{equation}
with $0\le \gamma <1$, that maps the unit disk onto simply connected and bounded region, which can be described as
\begin{equation}\label{Lim01}(u^2+v^2-eau)^2=a^2(u^2+v^2),\end{equation}
where
$$e=\frac{2\alpha}{1+\alpha^2},\quad a=\frac{2(1-\gamma)(1+\alpha^2)}{(1-\alpha^2)^2}.$$
The equation \eqref{Lim01} can be rewritten in a polar equation
\begin{equation}\label{Lim3}
\mathfrak{L}_{\alpha,\alpha,\gamma}(e^{it})=\set{\rho \mathrm{e}^{\mathrm{i}\varphi}\colon\
\rho = \Theta(\alpha,\varphi), \ -\pi<\varphi\leq\pi},
\end{equation}
where
$$\Theta(\alpha,\varphi)=\dfrac{2(1-\gamma)|\alpha|}{\alpha} \dfrac{1+\alpha^2+2|\alpha|\cos\varphi}{\myp{1-\alpha^2}^2}.$$

The boundary curve, known as Pascal snail (lima\c{c}on of Pascal), is a bicircular rational plane algebraic curve of degree 4 which belongs to the family of curves called centered trochoids or epitrochoids  (cf. Fig. \ref{Fig230.3}. Certainly $\mathfrak{L}_{0,0,\gamma}(\mathbb{D})$ is a disk). Pascal snail is the inversion of conic sections with respect to a focus.

\begin{figure}[!ht]
\centering
\subfloat[$\alpha=2-\sqrt{3},\gamma=0.5$]{%
\label{Fig230.31}\includegraphics[width=0.4\textwidth]{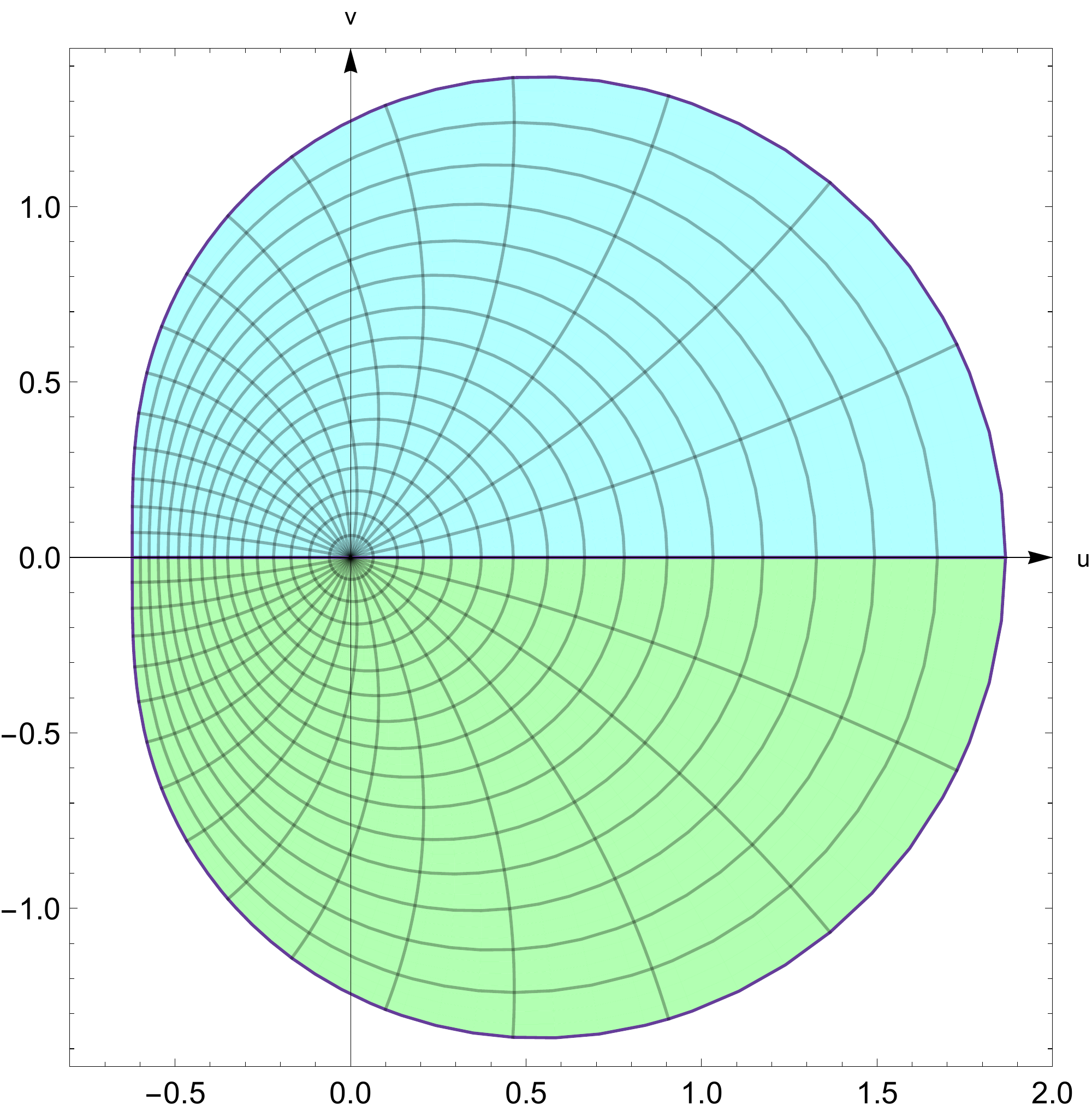}}%
\qquad\quad
\subfloat[$\alpha=-2+\sqrt{3},\gamma=0.5$]{%
\label{Fig230.32}\includegraphics[width=0.4\textwidth]{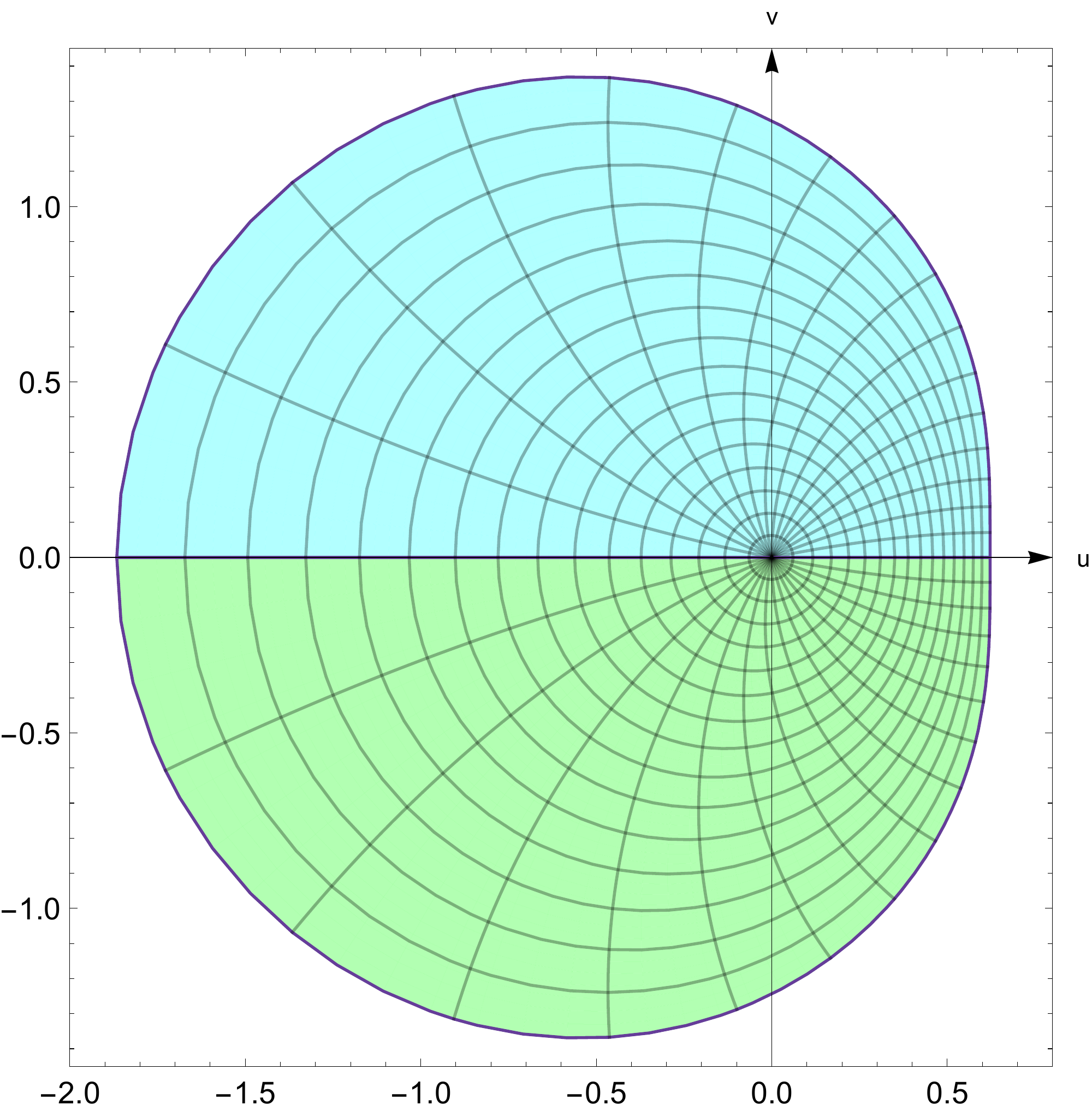}}%
\caption{The image of  $\mathbb{D}$ under $\mathfrak{L}_{\alpha,\alpha,\gamma}(z)$}\label{Fig230.3}
\end{figure}

We note that $|e|<1$ for $\alpha \neq 1$. In this case the snail is elliptic which is inverse of an ellipse with respect to its focus. In the case, when $|e|< 1/2$, that is $\alpha\in \left(-2+\sqrt{3},2-\sqrt{3}\right)$,  the domain  bounded by the Pascal snail \eqref{Lim01} is convex, and tends to the circle when $\alpha \to 0$.  For $|e|=1/2$ the snail has a flattened segment of the boundary and when $|e|>1/2$, that is for $\alpha \in \left(-1,-2+\sqrt{3}\right)\cup\left(2-\sqrt{3},1\right)$, the curve has a shape of a bean. The case when the Pascal snail has a loop does not hold, because it is equivalent to  the inequality $(1-\alpha)^2 <0$. Summarizing, the  domain  bounded by the Pascal snail is bounded, convex for $\alpha\in \left[-2+\sqrt{3},2-\sqrt{3}\right]$, concave for $\alpha \in \left(-1,-2+\sqrt{3}\right)\cup\left(2-\sqrt{3},1\right)$, and symmetric with respect to real axis.

 The function $\mathfrak{L}_{\alpha,\alpha,\gamma}(z)$ with $\alpha\ne 0$ can also be written as a composition of two analytic univalent functions, that is,
\[
\mathfrak{L}_{\alpha,\alpha,\gamma}(z)=\myp{h_2\circ h_1}(z)=\frac{1-\gamma}{2\alpha}\myb{\myp{\frac{1+\alpha z}{1-\alpha z}}^2-1},
\]
where
\[
h_1(z)=\frac{1+\alpha z}{1-\alpha z}\quad\textrm{and}\quad h_2(z)=\frac{1-\gamma}{2\alpha}\myp{z^2-1}.
\]
The function $h_1$ is univalent in $\mathbb{D}$ and $h_2$ is univalent in $h_1(\mathbb{D})=\left\{w\in \mathbb{C}\colon \left|\frac{w-1}{w+1}\right|<|\alpha|\right\}$.
\subsection{Conchoid of the Sluze} In the third special case we set $\beta =1, \alpha \in (-1,1)\setminus\{0\}$ or $\alpha =-1, \beta\in (-1,1)\setminus\{0\}$.  Let us consider $\beta =1, \alpha \in (-1,1)$. Thus $\mathfrak{L}_{\alpha,1,\gamma}$ has a form
\begin{equation}\label{Sluze1}
\mathfrak{L}_{\alpha,1,\gamma}(z)=\frac{2(1-\gamma)z}{(1-\alpha z)(1- z)},
\end{equation}
where $0\le \gamma <1$, that maps the unit disk onto simply connected region with boundary that is a curve
\begin{align*}\label{Sluze2}
\partial\mathfrak{D}(\alpha,1,\gamma)&=\left\{u+\mathrm{i} v\colon
\left(u+\frac{(1+\alpha)(1-\gamma)}{\myp{1-\alpha}^2}\right)(u^2+v^2)-\frac{4\alpha(1-\gamma)}{\myp{1-\alpha}^2\myp{1+\alpha}} u^2=0
\right\}\\
&=\Biggl\{ u+i v\colon \frac{\left[2(1-\gamma)u+(1+\alpha)(u^2+v^2)\right]^2}{(1+\alpha)^2}+\frac{4(1-\gamma)^2v^2}{(1-\alpha)^2}=\left[u^2+v^2\right]^2\Biggr\}
\end{align*}
known as the Conchoid of de Sluze, see Fig.~\ref{Fig12}. We note that the special case $\beta=1$ and $-1<\alpha<0$ was also considered  in \cite{MEY}.
\begin{figure}[!ht]
	\centering
	\subfloat[$\alpha=-0.5,\gamma=0.5$]{%
		\includegraphics[height=5.5cm,width=0.4\textwidth]{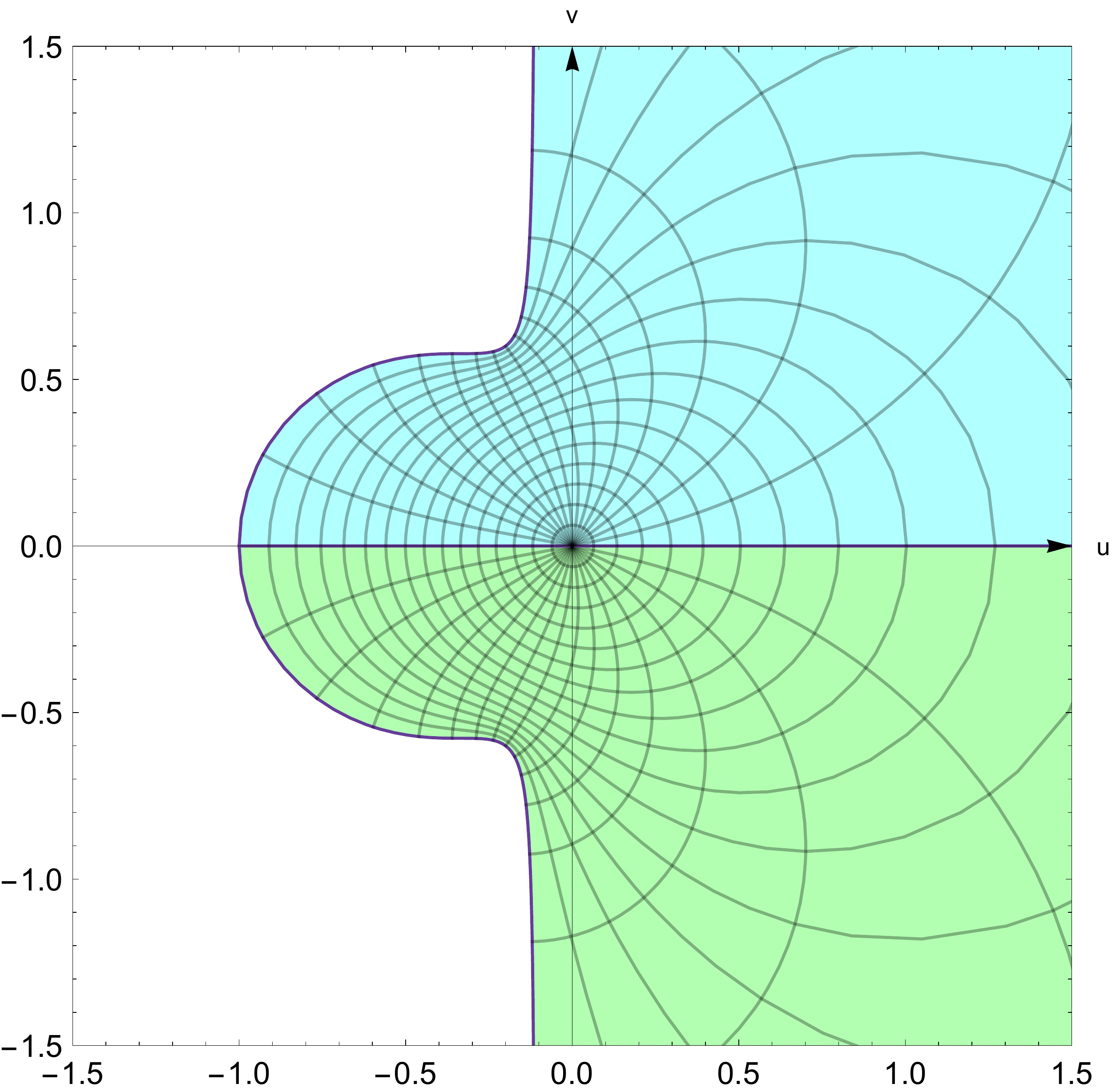}}%
	\qquad\quad
	\subfloat[$\alpha=0.5, \gamma=2/3$]{%
		\includegraphics[height=5.5cm,width=0.4\textwidth]{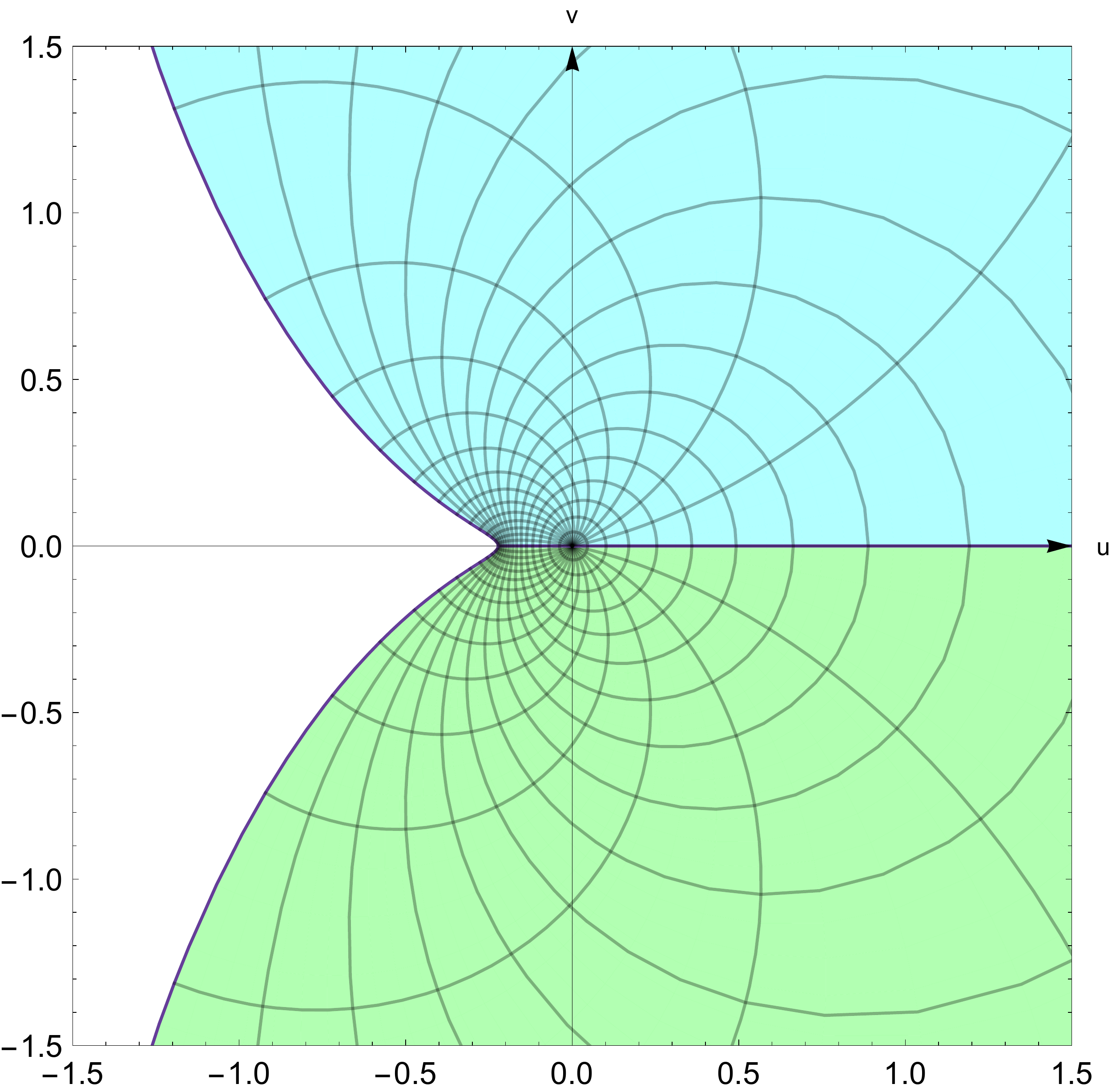}}%
	\caption{The image of  $\mathbb{D}$ under $\mathfrak{L}_{\alpha,1,\gamma}$}\label{Fig12}
\end{figure}

In the case when $\alpha =-1, \beta\in (-1,1)\setminus\{0\}$ we obtain the conchoid  of de Sluze (see Fig.~\ref{Fig34})  of the form
\begin{figure}[!ht]
	\centering
	\subfloat[$\beta=-0.5,\gamma=2/3$]{%
		\includegraphics[height=5.5cm,width=0.4\textwidth]{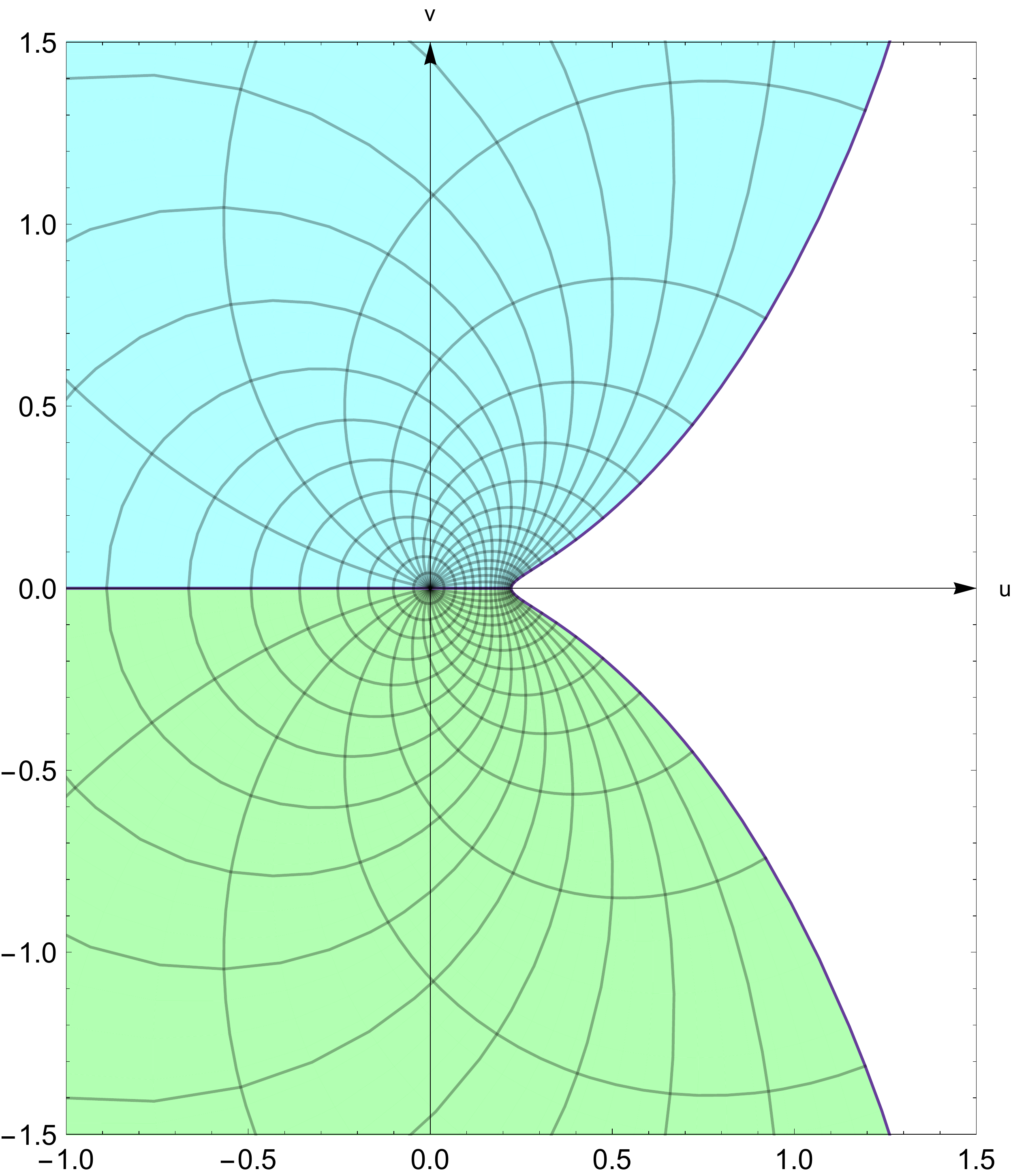}}%
	\qquad\quad
	\subfloat[$\beta=\gamma=0.5$]{%
		\includegraphics[height=5.5cm,width=0.4\textwidth]{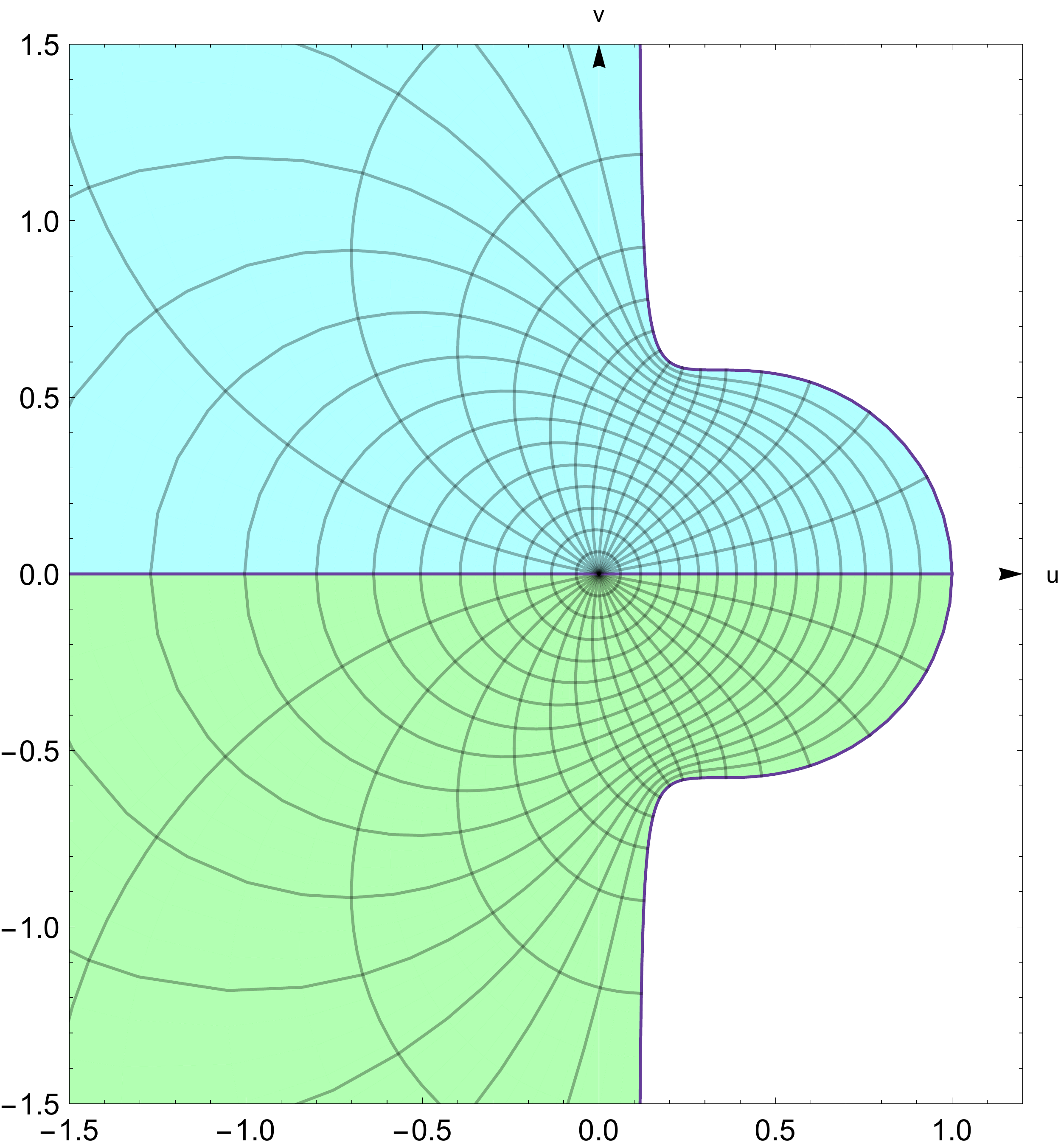}}%
	\caption{The image of  $\mathbb{D}$ under $\mathfrak{L}_{-1,\beta,\gamma}$}\label{Fig34}
\end{figure}

\begin{align*}
\partial\mathfrak{D}(-1,\beta,\gamma)&=\left\{u+\mathrm{i} v\colon
\myp{u-\frac{(1-\gamma)(1-\beta)}{\myp{1+\beta}^2}}(u^2+v^2)-\frac{4\beta(1-\gamma)}{\myp{1+\beta}^2\myp{1-\beta}}\,
u^2=0\right\}\\
&=\Biggl\{ u+i v\colon \frac{\left[2(1-\gamma)u-(1-\beta)(u^2+v^2)\right]^2}{(1-\beta)^2}+\frac{4(1-\gamma)^2v^2}{(1+\beta)^2}=\left[u^2+v^2\right]^2 \Biggr\},
\end{align*}
symmetric to the $\partial\mathfrak{D}(\alpha,1,\gamma)$ with respect to the imaginary axis.

\subsection{Hippopede. Leminiscate of Booth} Here we let  $\beta =-\alpha, \alpha \in (-1,0)$. In this case $\mathfrak{L}_{\alpha,-\alpha,\gamma}$ is of the form
\begin{equation}\label{Both1}
\mathfrak{L}_{\alpha,-\alpha,\gamma}(z)=\frac{2(1-\gamma)z}{1-\alpha^2 z^2},
\end{equation}
and the equation \eqref{Pascal2} or \eqref{Pascal3} reduces to  $(u^2+v^2)^2 = c^2u^2+d^2v^2$, with $c=2(1-\gamma)/(1-\alpha^2), d=2(1-\gamma)/(1+\alpha^2)$, that is
\begin{equation}\label{Both_1}
\partial\mathfrak{D}(\alpha,-\alpha,\gamma)=\Biggl\{u+i v\colon
\frac{4(1-\gamma)^2}{(1-\alpha^2)^2}u^2+\frac{4(1-\gamma)^2}{(1+\alpha^2)^2}v^2=\left(u^2+v^2\right)^2\Biggr\}.
\end{equation}
We remind that the hippopede is the bicircular rational algebraic curve of degree $4$,  symmetric with respect to both axes. Any hippopede is the intersection of a torus with one of its tangent planes that is parallel to its axis of rotational symmetry. When $c> d>0$ (that is $\alpha \neq 0$)  such a curve is known as an oval or leminiscate of Booth,  see Fig. \ref{Booth12}. Since the case $d=-c$ does not hold, the leminiscate \eqref{Both_1} do not reduce to the leminiscate of Bernoulli.
\begin{figure}[!ht]
	\centering
	\subfloat[$\alpha=-\sqrt{2/3}, \gamma=0.5$]{%
		\label{Fig230.32}\includegraphics[width=0.56\textwidth]{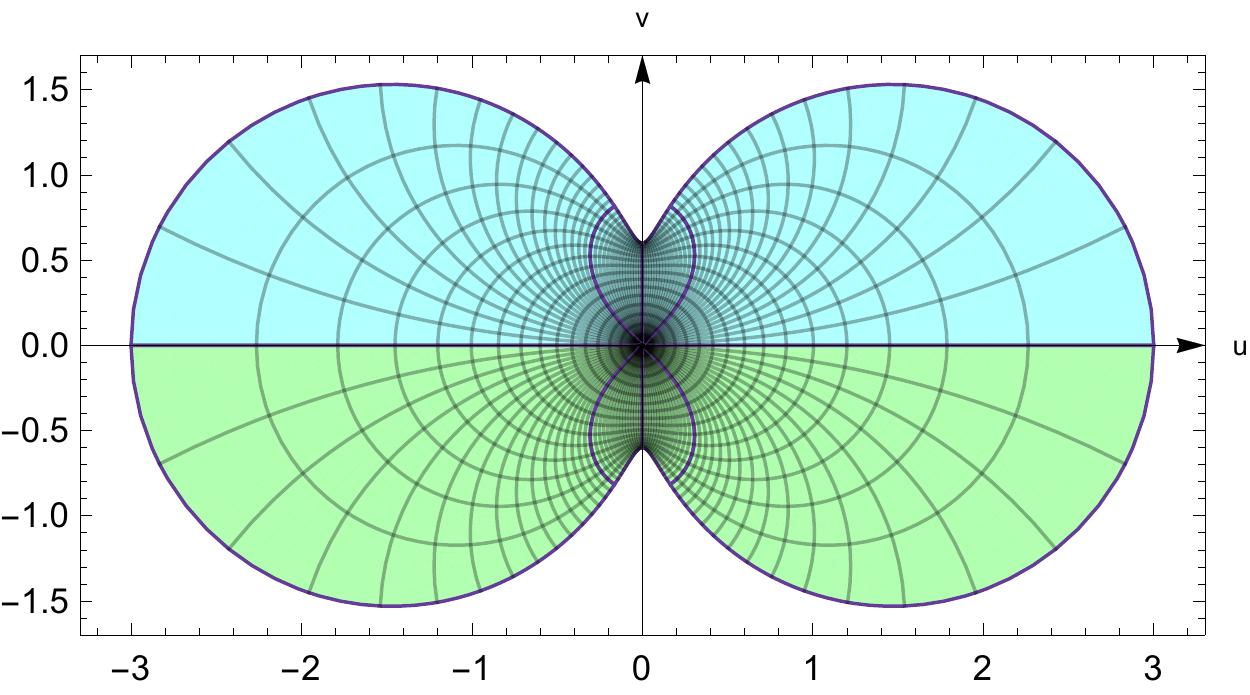}}%
	\caption{The image of  $\mathbb{D}$ under $\mathfrak{L}_{\alpha,-\alpha,\gamma}(z)$}\label{Booth12}
\end{figure}

We note that for $c/\sqrt{2}<d<c\sqrt{2}$,  the domain  bounded by the hippopede  is convex,  that is for $-\sqrt{3-2\sqrt{2}}< \alpha<0$, and the curve is called Booth's oval.  For $d=c/\sqrt{2}$, that is for $\alpha=- \sqrt{3-2\sqrt{2}}$ the hippopede has a flattened segment of the boundary. Summarizing, the  domain  bounded by the hippopede  is bounded and convex for $\alpha\in \left[-\sqrt{3-2\sqrt{2}},0\right)$, and concave for $\alpha \in \left(-1, -\sqrt{3-2\sqrt{2}}\right)$.

\subsection{Remaining cases} In this case, we consider the remaining range of parameters, i.e. $-1< \alpha < \beta < 1$, which were not considered in previous subsections. In these cases  a curve $\mathfrak{L}_{\alpha,\beta,\gamma}(e^{it})$ is the generalized Pascal snail, that has the form
\begin{equation}\label{Pascal5}(u^2+v^2-au)^2 = c^2u^2+d^2v^2\end{equation} with
\begin{equation}\label{acd}
a=\frac{2(1-\gamma)(\alpha+\beta)}{(1-\alpha^2)(1-\beta^2)},\ c=\dfrac{2(1-\gamma)(1+\alpha\beta)}{(1-\alpha^2)(1-\beta^2)},\ d=\frac{2(1-\gamma)(1+\alpha\beta)}{(1-\alpha\beta)\sqrt{(1-\alpha^2)(1-\beta^2)}}.
\end{equation}
We note, that the curve represented by the equation \eqref{Pascal5} has similar properties to the Pascal snail, and is symmetric only with respect to real axis. It has either horizontal eight-like shape,  bean-shape, pear-shape  or is convex. From this reason the region bounded by \eqref{Pascal5} is convex,  or concave. As we can see in the Theorem \ref{Th_Re} the minimum and maximum of real part are not always achieved on the real axis. Taking into account the geometrical properties of set $\mathfrak{L}(\mathbb{D})$, we get the following.

\begin{thm}[\cite{KT2}]\label{Th_Re} Let $-1 \le\alpha\le \beta\le 1$ and $\alpha\beta\neq \pm 1$. Then
$$\max\limits_{0\le \theta< 2\pi}\Re\, \mathfrak{L}(e^{i\theta}) =
\left\{\begin{array}{lcl}
\frac{(1+\alpha\beta)^2}{2(1-\alpha\beta)[2\sqrt{\alpha\beta(1-\alpha^2)(1-\beta^2)}-(\alpha+\beta)(1-\alpha\beta)]}&\textit{for}&(\alpha,
\beta)\in B_2,\\
\frac{1}{(1-\alpha)(1-\beta)}&& \textit{otherwise}, \\
\end{array}\right.$$

$$\min\limits_{0\le \theta< 2\pi}\Re\, \mathfrak{L}(e^{i\theta}) =
\left\{\begin{array}{lcl}
\frac{-(1+\alpha\beta)^2}{2(1-\alpha\beta)[2\sqrt{\alpha\beta(1-\alpha^2)(1-\beta^2)}+(\alpha+\beta)(1-\alpha\beta)]}&\textit{for}& (\alpha,\beta)\in B_1, \\
\frac{-1}{(1+\alpha)(1+\beta)}& & \textit{otherwise}, \\
\end{array}\right.$$
where
$$B_1=\left\{0 <\alpha<1,\  \beta_1(\alpha) < \beta <1
\right\},\quad B_2 =\left\{-1 <\alpha<0, \ \alpha < \beta<\beta_2(\alpha)\right\},$$ with
$$\beta_1(\alpha)=\frac{(1+\alpha) \sqrt{\alpha^2 + 14 \alpha + 1 }-(\alpha^2 + 6 \alpha + 1)}{2\alpha(1-
\alpha)},$$
$$\beta_2(\alpha) =\frac{(1-\alpha)\sqrt{ \alpha^2 - 14 \alpha+1 }-\alpha^2 + 6 \alpha -1}{2\alpha(1+
\alpha)}.$$
The sets $B_1, B_2$ are represented on a Fig. \ref{Fig2}.
\begin{figure}[h]
	\centering
	\includegraphics[width=0.4\textwidth]{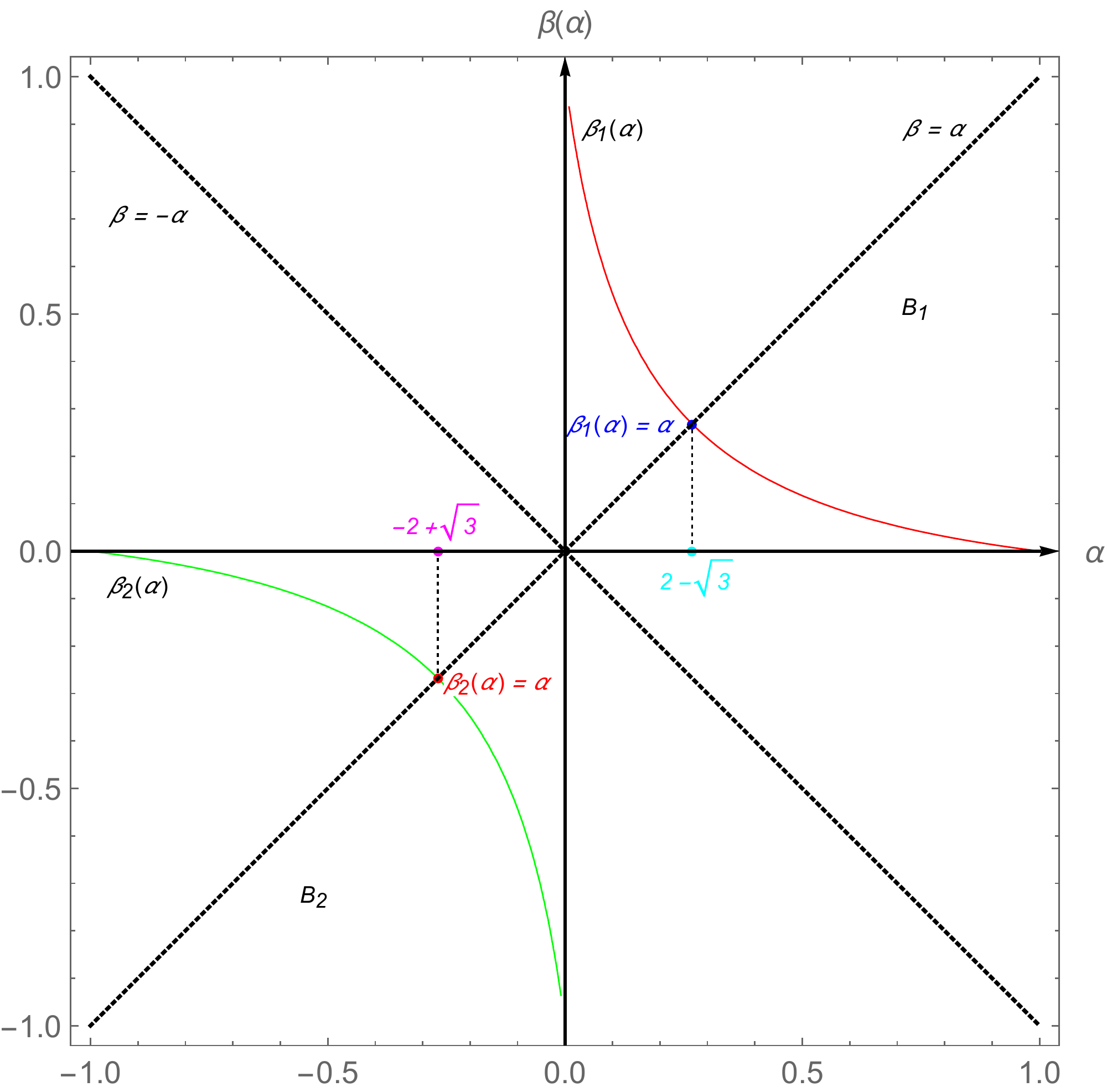}%
	\caption{The range of the parameters $\alpha, \beta$.}\label{Fig2}
\end{figure}

In the  special cases we have
$$\max\limits_{0\le \theta< 2\pi}\Re\, \mathfrak{L}(e^{i\theta}) =
\left\{\begin{array}{lcl}
-\frac{1}{8\alpha}\left(\frac{1+\alpha^2}{1-\alpha^2}\right)^2&\textit{for}& -1<\alpha\le \sqrt{3}-2,\ \beta =\alpha, \\
\frac{1}{(1-\alpha)^2}&\textit{for}&\sqrt{3}-2\le\alpha <1,\ \beta =\alpha, \\
- \frac{1+\alpha}{2(1-\alpha)^2}  & \textit{for}& -1< \alpha\le0,\ \beta=1,\\
-  \frac{1}{2(1+\alpha)} &\textit{for}& 0\le \alpha<1,\ \beta=1,\\
\frac{1}{1-\alpha^2}&\textit{for}&-1<\alpha <0,\ \beta=-\alpha.
\end{array}\right.$$

$$\min\limits_{0\le \theta< 2\pi}\Re\, \mathfrak{L}(e^{i\theta}) =
\left\{\begin{array}{lcl}
-\frac{1}{8\alpha}\left(\frac{1+\alpha^2}{1-\alpha^2}\right)^2&\textit{for}& 2-\sqrt{3}\le \alpha<1,\ \beta=\alpha,\\
-\frac{1}{(1+\alpha)^2}&\textit{for}&-1<\alpha \le\sqrt{3}-2,\ \beta =\alpha, \\
- \frac{1}{2(1+\alpha)}  &\textit{for}& -1< \alpha\le0,\ \beta=1,\\
- \frac{1-\alpha}{2(1+\alpha)^2}&\textit{for}& 0\le \alpha<1,\ \beta=1,\\
-\frac{1}{1-\alpha^2}&\textit{for}&-1<\alpha <0,\ \beta=-\alpha.
\end{array}\right.$$
\end{thm}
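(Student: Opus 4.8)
The plan is to reduce the problem to a one–variable optimisation and then locate the global extrema by elementary calculus. Since $\Re\mathfrak L(e^{i\theta})$ depends on $\theta$ only through $x:=\cos\theta\in[-1,1]$, formula \eqref{ReImS} exhibits it as the rational function $u(x)=\dfrac{(1+\alpha\beta)x-(\alpha+\beta)}{P(x)Q(x)}$, where $P(x):=1+\alpha^2-2\alpha x=\abs{1-\alpha e^{i\theta}}^2$ and $Q(x):=1+\beta^2-2\beta x=\abs{1-\beta e^{i\theta}}^2$; using the partial fractions $\frac{z}{(1-\alpha z)(1-\beta z)}=\frac1{\alpha-\beta}\bigl(\frac1{1-\alpha z}-\frac1{1-\beta z}\bigr)$ one also has
\[
u(x)=\frac1{2(\alpha-\beta)}\left(\frac{1-\alpha^2}{P(x)}-\frac{1-\beta^2}{Q(x)}\right)\qquad(\alpha\ne\beta).
\]
Since $P\ge(1-\abs\alpha)^2$ and $Q\ge(1-\abs\beta)^2$ on $[-1,1]$, $u$ is real-analytic there whenever $\abs\alpha,\abs\beta<1$; when $\beta=1$ or $\alpha=-1$ the apparent pole cancels, leaving $u(x)=\frac{-(1+\alpha)}{2P(x)}$, resp.\ $\frac{1-\beta}{2Q(x)}$. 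The cases $\abs\alpha=1$, $\abs\beta=1$, and $\alpha=\beta$ are among the special cases at the end, so in the main argument I assume $-1<\alpha<\beta<1$. Then the extrema of $\Re\mathfrak L(e^{i\theta})$ occur among the endpoints $x=\pm1$ and the interior zeros of $u'$.

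The endpoint values are $u(1)=\frac{1}{(1-\alpha)(1-\beta)}>0$ and $u(-1)=\frac{-1}{(1+\alpha)(1+\beta)}<0$ — the ``otherwise'' values. Differentiating the partial-fraction form and clearing denominators, $u'(x)=0\iff\alpha(1-\alpha^2)Q(x)^2=\beta(1-\beta^2)P(x)^2$. If $\alpha\beta\le0$, one side of this is $\ge0$ and the other $\le0$ on $[-1,1]$, with no common zero there, so $u'$ keeps a nonzero constant sign: $u$ is strictly monotone and, as $u(-1)<0<u(1)$, the maximum is $u(1)$ and the minimum $u(-1)$, which settles every case with $\alpha\beta\le0$. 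If $\alpha\beta>0$, the numbers $\alpha(1-\alpha^2)$ and $\beta(1-\beta^2)$ share a sign, so writing $\mu:=\sqrt{\dfrac{\beta(1-\beta^2)}{\alpha(1-\alpha^2)}}>0$ the left side minus the right factors, up to a nonzero constant, as $(Q-\mu P)(Q+\mu P)$; since $Q+\mu P>0$ on $[-1,1]$, $u$ has at most one interior critical point $x^\ast$, the solution of $Q=\mu P$. Substituting $x^\ast$ into $u$ — a short computation in which $\sqrt{\alpha\beta(1-\alpha^2)(1-\beta^2)}$ surfaces — yields $u(x^\ast)=\dfrac{-(1+\alpha\beta)^2}{2(1-\alpha\beta)\bigl[\,2\sqrt{\alpha\beta(1-\alpha^2)(1-\beta^2)}+(\alpha+\beta)(1-\alpha\beta)\,\bigr]}$ when $\alpha,\beta>0$, a negative number, and the positive value obtained on replacing $(\alpha+\beta)$ by $-(\alpha+\beta)$ in the bracket when $\alpha,\beta<0$; these are exactly the $B_1$- and $B_2$-expressions in the statement. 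Because $u$ has only this one interior critical point and $u(x^\ast)$ has sign opposite to one of the endpoint values, $x^\ast$ — whenever it lies in $(-1,1)$ — is forced to be a local minimum if $\alpha,\beta>0$ and a local maximum if $\alpha,\beta<0$; it is then the global minimum, resp.\ maximum, the other extremum remaining at $x=1$, resp.\ $x=-1$, whereas if $x^\ast\notin(-1,1)$ then $u$ is monotone on $[-1,1]$ and both extrema are endpoint values. (Under $(\alpha,\beta)\mapsto(-\alpha,-\beta)$ one has $\mathfrak L(z)\mapsto-\mathfrak L(-z)$, so the maximum in one sign regime equals minus the minimum in the other; this explains the correspondence $B_1\leftrightarrow B_2$ between the two formulas and halves the work.)

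It remains to decide \emph{exactly} when $x^\ast\in(-1,1)$, and this is the crux. The boundary conditions $x^\ast=-1$ and $x^\ast=1$ reduce to $(1+\beta)^2=\mu(1+\alpha)^2$ and $(1-\beta)^2=\mu(1-\alpha)^2$, that is, to $\dfrac{\beta(1-\beta)}{(1+\beta)^3}=\dfrac{\alpha(1-\alpha)}{(1+\alpha)^3}$ and $\dfrac{\beta(1+\beta)}{(1-\beta)^3}=\dfrac{\alpha(1+\alpha)}{(1-\alpha)^3}$; each is a cubic in $\beta$ from which the trivial root $\beta=\alpha$ factors out, leaving the quadratics $\alpha(1-\alpha)\beta^2+(\alpha^2+6\alpha+1)\beta-(1-\alpha)=0$ and $\alpha(1+\alpha)\beta^2+(\alpha^2-6\alpha+1)\beta+(1+\alpha)=0$, with discriminants $(1+\alpha)^2(\alpha^2+14\alpha+1)$ and $(1-\alpha)^2(\alpha^2-14\alpha+1)$; solving reproduces $\beta_1(\alpha)$ and $\beta_2(\alpha)$, and a sign analysis shows $x^\ast\in(-1,1)$ precisely on $B_1$ (for the minimum, with $\alpha,\beta>0$) and on $B_2$ (for the maximum, with $\alpha,\beta<0$), which together with the previous paragraph proves the general statement. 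The five special cases then drop out by substitution: $\beta=1$ and $\alpha=-1$ use the monotone $u(x)=\frac{-(1+\alpha)}{2P(x)}$, resp.\ $\frac{1-\beta}{2Q(x)}$, so the extrema are the (one-sided) endpoint limits; $\beta=-\alpha$ gives the monotone $u(x)=\frac{(1-\alpha^2)x}{(1+\alpha^2)^2-4\alpha^2x^2}$, with extrema $\pm\frac1{1-\alpha^2}$; and $\beta=\alpha$ gives $u(x)=\frac{(1+\alpha^2)x-2\alpha}{P(x)^2}$, whose single interior critical point $x^\ast=\frac{8\alpha^2-(1+\alpha^2)^2}{2\alpha(1+\alpha^2)}$ satisfies $u(x^\ast)=-\frac1{8\alpha}\bigl(\frac{1+\alpha^2}{1-\alpha^2}\bigr)^2$ and lies in $(-1,1)$ exactly for $\abs\alpha\ge2-\sqrt3$. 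The main obstacle is this last passage: proving that $x^\ast$ enters $(-1,1)$ exactly across $\beta=\beta_1(\alpha)$ (resp.\ $\beta_2(\alpha)$) forces one to square away the radical $\mu$, track the resulting extraneous-root sign conditions, and factor the ensuing quartics in $\alpha$ (among them $(\alpha-1)^2(\alpha^2+4\alpha+1)$ and its reflection) — which is where essentially all of the computation sits.
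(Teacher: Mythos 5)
The paper offers no proof of Theorem \ref{Th_Re} --- it is imported wholesale from \cite{KT2} --- so there is nothing to compare your argument against; judged on its own, your proposal is correct in structure and every identity you actually write down checks out. The reduction to $u(x)$ with $x=\cos\theta$, the partial-fraction form $u=\frac{1}{2(\alpha-\beta)}\bigl(\frac{1-\alpha^2}{P}-\frac{1-\beta^2}{Q}\bigr)$, the critical-point equation $\alpha(1-\alpha^2)Q^2=\beta(1-\beta^2)P^2$ with its sign argument for $\alpha\beta\le 0$ and the factorization $(Q-\mu P)(Q+\mu P)$ for $\alpha\beta>0$, the boundary quadratics $\alpha(1\mp\alpha)\beta^2+(\alpha^2\pm 6\alpha+1)\beta\mp(1\mp\alpha)=0$ with discriminants $(1\pm\alpha)^2(\alpha^2\pm14\alpha+1)$, and the $\beta=\alpha$ computation $x^\ast=\frac{8\alpha^2-(1+\alpha^2)^2}{2\alpha(1+\alpha^2)}$, $u(x^\ast)=-\frac{1}{8\alpha}\bigl(\frac{1+\alpha^2}{1-\alpha^2}\bigr)^2$ with $x^\ast\in(-1,1)$ iff $(\alpha^2-4\alpha+1)(\alpha^2+4\alpha+1)<0$ --- all of these I verified. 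The two steps you defer (the closed form of $u(x^\ast)$ in the general case, and the sign analysis locating $x^\ast$ in $(-1,1)$ exactly on $B_1$, resp.\ $B_2$) are indeed where the labor sits, but you have set them up correctly and they are routine; I confirmed the $u(x^\ast)$ formula numerically at a sample point of $B_1$.

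One thing you should be aware of: your (correct) method does not reproduce one entry of the printed special-case table, and the fault is the table's, not yours. For $\beta=1$ and $0\le\alpha<1$ you get $u(\theta)=\frac{-(1+\alpha)}{2(1+\alpha^2-2\alpha\cos\theta)}$, whose minimum is the $\theta\to 0$ limit $-\frac{1+\alpha}{2(1-\alpha)^2}$ (equivalently, the $\beta\to 1^-$ limit of the $B_1$ formula), not the value $-\frac{1-\alpha}{2(1+\alpha)^2}$ printed in the theorem; a direct check at $\alpha=\tfrac12$, $\theta$ small gives $\Re\,\mathfrak L\approx -3$, well below $-\tfrac{1}{9}$. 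So when you substitute into the special cases, flag this discrepancy rather than forcing agreement. Also make sure, when you finish the sign analysis for $B_1$, to observe that $\beta_1(\alpha)<\alpha$ exactly for $\alpha>2-\sqrt3$, which is what reconciles the general $B_1$ criterion with the $\beta=\alpha$ threshold $2-\sqrt3$ in the special-case table.
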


\subsection{Conclusions}
In general $\mathfrak{L}(\mathbb{D})$ is a domain symmetric about the real axis and starlike with respect to origin and such that $\mathfrak{L}(0)=0$, $\mathfrak{L}'(0)=1 > 0$. The geometrical properties of the regions $\mathfrak{L}(\mathbb{D})$ provides a natural bridge between the convex and concave domains. We also note that such domains were discussed in relation of generalized typically-real functions and generalized Chebyshev polynomials of the second kind \cite{KT1, KT2}.

From Theorem \ref{Th_Re} we conclude the following Corollary.
 \begin{cor}\label{Cor1_Th-Re}
Let $-1 \le\alpha\le \beta\le 1$, $\alpha\beta\neq \pm 1$ and $0\le\gamma<1$, and let $\mathfrak{L}_{\alpha,\beta,\gamma}$ be the function defined by \eqref{funS}. Then, for   $z\in \mathbb{D}$ we have

$$\Re\set{\mathfrak{L}_{\alpha,\beta,\gamma}(z)}> \mathfrak{L}_0(\alpha,\beta,\gamma)=
\left\{\begin{array}{lcl}
\frac{-(1+\alpha\beta)^2(1-\gamma)}{(1-\alpha\beta)[2\sqrt{\alpha\beta(1-\alpha^2)(1-\beta^2)}+(\alpha+\beta)(1-\alpha\beta)]}&\textit{for}& (\alpha,\beta)\in B_1, \\
\frac{-2(1-\gamma)}{(1+\alpha)(1+\beta)}& & otherwise, \\
\end{array}\right.$$
and
$$ \Re\set{\mathfrak{L}_{\alpha,\beta,\gamma}(z)}<\mathfrak{M}_0(\alpha,\beta,\gamma) =\left\{\begin{array}{lcl}
\frac{(1+\alpha\beta)^2(1-\gamma)}{(1-\alpha\beta)[2\sqrt{\alpha\beta(1-\alpha^2)(1-\beta^2)}-(\alpha+\beta)(1-\alpha\beta)]}&\textit{for}&(\alpha,
\beta)\in B_2,\\
\frac{2(1-\gamma)}{(1-\alpha)(1-\beta)}&& otherwise, \\
\end{array}\right.$$
Also, if $\alpha=0$, then
$$\left|\mathfrak{L}_{0,\beta,\gamma}(z)-\frac{2(1-\gamma)\beta}{1-\beta^2} \right|\le \frac{2(1-\gamma)}{1-\beta^2}.
$$
\end{cor}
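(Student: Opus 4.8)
The plan is to deduce everything from Theorem~\ref{Th_Re} by a normalisation followed by the maximum principle. Write $\mathfrak{L}_{\alpha,\beta,\gamma}(z)=2(1-\gamma)\,\mathfrak{L}(z)$ with $\mathfrak{L}$ the function in \eqref{eq_funT_bou}; since $2(1-\gamma)>0$, it suffices to bound $\Re\{\mathfrak{L}(z)\}$ on $\mathbb{D}$ and then multiply the resulting inequalities by $2(1-\gamma)$. Comparing the quantities in the Corollary with those of Theorem~\ref{Th_Re} (both the generic values on $B_1,B_2$ and the special-case values), one reads off at once that $\mathfrak{M}_0(\alpha,\beta,\gamma)=2(1-\gamma)\max_{0\le\theta<2\pi}\Re\{\mathfrak{L}(e^{i\theta})\}$ and $\mathfrak{L}_0(\alpha,\beta,\gamma)=2(1-\gamma)\min_{0\le\theta<2\pi}\Re\{\mathfrak{L}(e^{i\theta})\}$. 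Thus the real statement to prove is that the extremal values of $\Re\,\mathfrak{L}$ on $\partial\mathbb{D}$ turn into \emph{strict} bounds on $\mathbb{D}$.

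Suppose first $|\alpha|<1$ and $|\beta|<1$. Then the poles $1/\alpha,1/\beta$ of $\mathfrak{L}$ lie outside $\overline{\mathbb{D}}$, so $\mathfrak{L}$ is holomorphic in a neighbourhood of $\overline{\mathbb{D}}$; hence $u(z):=\Re\{\mathfrak{L}(z)\}$ is harmonic on $\mathbb{D}$, continuous on $\overline{\mathbb{D}}$, and non-constant (a constant real part would force $\mathfrak{L}$ to be constant by the Cauchy--Riemann equations). By the strong maximum and minimum principles for harmonic functions, $\min_{|\zeta|=1}u(\zeta)<u(z)<\max_{|\zeta|=1}u(\zeta)$ for every $z\in\mathbb{D}$, and multiplying by $2(1-\gamma)$ and using the identifications above gives $\mathfrak{L}_0(\alpha,\beta,\gamma)<\Re\{\mathfrak{L}_{\alpha,\beta,\gamma}(z)\}<\mathfrak{M}_0(\alpha,\beta,\gamma)$.

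There remain the parameter values $\beta=1$ and (by the reflection in the imaginary axis noted in the text) $\alpha=-1$, where $\mathfrak{L}$ has a simple pole on $\partial\mathbb{D}$ and $\mathfrak{L}_{\alpha,\beta,\gamma}(\mathbb{D})$ is unbounded. Here one of the two estimates is vacuous, the corresponding ``otherwise'' value being $+\infty$ resp.\ $-\infty$; the other follows from the explicit description of the Conchoid region established above, which exhibits $\mathfrak{L}_{\alpha,\beta,\gamma}(\mathbb{D})$ as the open region bounded by the displayed quartic, so that the non-constant harmonic function $\Re w$ stays strictly above (resp.\ below) its boundary infimum (resp.\ supremum); alternatively, one applies the argument of the previous paragraph to the dilates $z\mapsto\mathfrak{L}(rz)$ and lets $r\uparrow1$. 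Finally, when $\alpha=0$ (so that, for the stated formula, $0\le\beta<1$), the map is the Möbius transformation $\mathfrak{L}_{0,\beta,\gamma}(z)=2(1-\gamma)z/(1-\beta z)$; solving for $z$ and imposing $|z|<1$ leads, after completing the square, to $\bigl|w-\tfrac{2(1-\gamma)\beta}{1-\beta^2}\bigr|<\tfrac{2(1-\gamma)}{1-\beta^2}$, i.e.\ $w$ lies in the circular domain $\mathfrak{D}(0,\beta,\gamma)$ recalled at the beginning of the section, which is the last inequality. The one place calling for care is the pole case $\beta=1$ (resp.\ $\alpha=-1$): one must check that the escape of the boundary curve to infinity does not corrupt the finite one-sided bound, which is why I would lean on the already-known shape of the Conchoid region there rather than on a bare maximum-principle argument.
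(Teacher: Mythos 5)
Your argument is correct and is essentially the paper's own (the paper offers no proof beyond the phrase ``From Theorem \ref{Th_Re} we conclude the following Corollary''): one rescales by $2(1-\gamma)$, reads the boundary extrema of $\Re\,\mathfrak{L}$ off Theorem \ref{Th_Re}, and passes to strict interior inequalities, with the $\alpha=0$ case reduced to the M\"obius image of $\mathbb{D}$. Your explicit invocation of the strong maximum principle for the harmonic function $\Re\,\mathfrak{L}$, and your separate treatment of the pole cases $\beta=1$, $\alpha=-1$ via dilates, simply make precise the step the paper leaves implicit.
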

In the sequel we will use the following lemma.

 \begin{lem}[\cite{Bab}]\label{inequalityreal}
Let $z$ is a complex number with positive real part. Then for any real
number $t$ such that $t\in [0,1]$, we have
$\Re\set{z^t}\ge \myp{\Re\, z}^t$.
\end{lem}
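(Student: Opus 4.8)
The plan is to reduce the statement to an elementary trigonometric inequality via the polar representation of $z$. Since $\Re z>0$, we may write $z=r\mathrm{e}^{\mathrm{i}\theta}$ with $r=|z|>0$ and $\theta=\Arg z\in\left(-\tfrac{\pi}{2},\tfrac{\pi}{2}\right)$, using the principal branch (which is the natural one for defining $z^{t}$ on the right half-plane). Then $z^{t}=r^{t}\mathrm{e}^{\mathrm{i}t\theta}$, so $\Re\set{z^{t}}=r^{t}\cos(t\theta)$, while $\Re z=r\cos\theta>0$ gives $\myp{\Re z}^{t}=r^{t}(\cos\theta)^{t}$. Cancelling the positive factor $r^{t}$, the asserted inequality is equivalent to
\[
\cos(t\theta)\ge(\cos\theta)^{t},\qquad \theta\in\left(-\tfrac{\pi}{2},\tfrac{\pi}{2}\right),\ t\in[0,1].
\]
Because $\cos$ is even in $\theta$, it suffices to treat $\theta\in\left[0,\tfrac{\pi}{2}\right)$.

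Next I would dispose of the degenerate cases: if $\theta=0$, or $t=0$, or $t=1$, then the two sides coincide (equal to $1$, $1$, and $\cos\theta$ respectively), so there is nothing to prove. Assume therefore $\theta\in\left(0,\tfrac{\pi}{2}\right)$ and $t\in(0,1)$. For such $\theta$ we have $t\theta\in\left(0,\tfrac{\pi}{2}\right)$, hence $\cos(t\theta)>0$ and $\cos\theta>0$, and we may pass to logarithms. Set
\[
f(t)=\log\cos(t\theta)-t\log\cos\theta,\qquad t\in[0,1].
\]
Then $f(0)=f(1)=0$, and $f''(t)=-\theta^{2}\sec^{2}(t\theta)<0$ on $(0,1)$, so $f$ is strictly concave there. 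A strictly concave function vanishing at both endpoints of an interval is strictly positive in the interior, whence $f(t)>0$ and therefore $\cos(t\theta)>(\cos\theta)^{t}$ for $t\in(0,1)$. Combined with the degenerate cases this gives the desired inequality $\Re\set{z^{t}}\ge\myp{\Re z}^{t}$ (with equality precisely when $\theta=0$ or $t\in\{0,1\}$).

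There is essentially no serious obstacle here; the only point requiring a little care is the well-definedness of $z^{t}$, which is why I insist on the principal branch and on the observation that $\Re z>0$ keeps $\theta$, and hence $t\theta$, strictly inside $\left(-\tfrac{\pi}{2},\tfrac{\pi}{2}\right)$, so that $\cos(t\theta)$ and $\cos\theta$ are positive and the logarithmic reformulation is legitimate. An alternative to introducing $f$ is to note directly that $\varphi\mapsto\log\cos\varphi$ is concave on $\left(-\tfrac{\pi}{2},\tfrac{\pi}{2}\right)$ and apply this to the convex combination $t\theta=t\cdot\theta+(1-t)\cdot 0$, obtaining $\log\cos(t\theta)\ge t\log\cos\theta+(1-t)\log\cos 0=t\log\cos\theta$, which is the same inequality in one line; I would likely present this shorter route in the final writeup.
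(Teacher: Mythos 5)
Your argument is correct and complete. Note that the paper itself offers no proof of this lemma at all: it is stated with a citation to Babalola's paper \cite{Bab} and used as a black box, so there is nothing internal to compare against. Your reduction --- writing $z=r\mathrm{e}^{\mathrm{i}\theta}$ with $\theta\in(-\pi/2,\pi/2)$ (forced by $\Re z>0$), cancelling $r^{t}>0$, and reducing to $\cos(t\theta)\ge(\cos\theta)^{t}$ --- is the standard route, and the one-line finish via concavity of $\varphi\mapsto\log\cos\varphi$ applied to the convex combination $t\theta=t\cdot\theta+(1-t)\cdot 0$ is clean and airtight; the care you take with the principal branch and with the positivity of $\cos(t\theta)$ and $\cos\theta$ is exactly what is needed to justify taking logarithms. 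If anything, you could present only the shorter concavity argument you sketch at the end and omit the auxiliary function $f$, but both versions are valid, and the equality analysis ($\theta=0$ or $t\in\{0,1\}$) is a small bonus beyond what the lemma asserts.
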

\section{Subclass of the Carath\`{e}odory class related to the generalized Pascal snail}\label{sec:2}
 Denote by $\mathcal{P}$ the Carath\`{e}odory class of functions i.e. $\mathcal{P}= \{p:\ p(z)=1+p_1 z+ p_2 z^2+\cdots,\ \Re\, p(z) > 0\ (z\in \mathbb{D})\}$. The fundamental importance of  $\mathcal{P}$ in geometric functions theory relies on the construction of several related families of analytic functions and is well known. Hence, various subclasses of $\mathcal{P}$ were defined and studied. Classical cases are related to the  halfplane and angular domain i.e.  $\mathcal{P}(\alpha)$ that denotes a subclass of  $\mathcal{P}$ consisting with functions with real part greater than $\alpha\ (0\le \alpha < 1)$, and  $\mathcal{P}_\gamma$ the class with argument between $-\gamma\pi/2$ and $\gamma\pi/2\ (0<\gamma\le 1)$.
Also, several subfamilies of  $\mathcal{P}$ were determined by the fact that some functionals are contained in convex subdomains of right halfplane. Therefore any subfamily of halfplane domains were considered in the context to a subfamily of $\mathcal{P}$. Hence a definition of the domains related to the Pascal snail was a motivation to the definition of some subclass of $\mathcal{P}$ associated with such domains.  To do this we first translate a domain $\mathfrak{L}_{\alpha,\beta,\gamma}(\mathbb{D})$  with a vector $(1,0)$ in order to obtain a domain $\mathfrak{D}_{\alpha,\beta,\gamma}$ contained in a right halfplane such that $1\in \mathfrak{D}_{\alpha,\beta,\gamma}$.
The boundary of the domain $\mathfrak{D}_{\alpha,\beta,\gamma}$ is described as follows:

$\partial\mathfrak{D}_{\alpha,\beta,\gamma}\quad=\Bigg\{u+iv:\ \dfrac{\myp{(2-2\gamma)(u-1)+(\alpha+\beta)((u-1)^2+v^2)}^2}{(1+\alpha\beta)^2}$

$\hspace{7cm} +\dfrac{4(1-\gamma)^2v^2}{(1-\alpha\beta)^2}-((u-1)^2+v^2)^2=0\Bigg\}.$

 We note that $\mathfrak{D}_{\alpha,\beta,\gamma}$ is contained in a halfplane $\Re\, w > 1+\mathfrak{L}_0$, where $\mathfrak{L}_0$ is given in Corollary \ref{Cor1_Th-Re}. Anyway, there is substantial difference between $\mathfrak{D}_{\alpha,\beta,\gamma}$ and a halfplane because  $\mathfrak{D}_{\alpha,\beta,\gamma}$ is not always a convex domain. However, when $\alpha= 0$ and $\beta\to 1^-$ then $\mathfrak{D}_{\alpha,\beta,\gamma}$ tends to a halfplane $\Re \, w  > \gamma-1$.
Thus $\mathfrak{D}_{\alpha,\beta,\gamma}$ provides a natural bridge between the convex and the concave domains.

Now, we define a function $\mathcal{T}_{\alpha,\beta,\gamma}$ as
 \begin{equation}\label{eq_def_funT}
\mathcal{T}_{\alpha,\beta,\gamma}(z)=1+ \mathfrak{L}_{\alpha,\beta,\gamma}(z),
\end{equation}
that map $\mathbb{D}$ univalently onto a domain $\mathfrak{D}_{\alpha,\beta,\gamma}$. Rewriting Corollary  \ref{Cor1_Th-Re}  for the function $\mathcal{T}_{\alpha,\beta,\gamma}$ we conclude the following theorem.
\begin{thm}\label{main1}
Let $-1<\alpha\le\beta<1$, $0\le \gamma<1$, and let $\mathcal{T}_{\alpha,\beta,\gamma}(\cdot)$ be defined by \eqref{eq_def_funT}. Then
$$\Re\set{\mathcal{T}_{\alpha,\beta,\gamma}(z)}> 1+\mathfrak{L}_0(\alpha,\beta,\gamma)$$
and
$$ \Re\set{\mathcal{T}_{\alpha,\beta,\gamma}(z)}<1+\mathfrak{M}_0(\alpha,\beta,\gamma),$$
where $\mathfrak{L}_0(\alpha,\beta,\gamma)$ and $\mathfrak{M}_0(\alpha,\beta,\gamma)$ are given in Corollary \ref{Cor1_Th-Re}.
\end{thm}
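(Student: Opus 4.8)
The plan is to observe that Theorem \ref{main1} is nothing more than Corollary \ref{Cor1_Th-Re} transported by the translation $w \mapsto w+1$, so essentially no new work is required. First I would record that, by the definition \eqref{eq_def_funT}, one has $\mathcal{T}_{\alpha,\beta,\gamma}(z) = 1 + \mathfrak{L}_{\alpha,\beta,\gamma}(z)$ for every $z \in \mathbb{D}$, whence, by additivity of the real part,
$$\Re\set{\mathcal{T}_{\alpha,\beta,\gamma}(z)} = 1 + \Re\set{\mathfrak{L}_{\alpha,\beta,\gamma}(z)}.$$

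Second, I would invoke Corollary \ref{Cor1_Th-Re}: its hypotheses $-1 \le \alpha \le \beta \le 1$, $\alpha\beta \ne \pm 1$, $0 \le \gamma < 1$ are satisfied a fortiori under the present, strictly sharper, range $-1 < \alpha \le \beta < 1$, and the Corollary gives $\mathfrak{L}_0(\alpha,\beta,\gamma) < \Re\set{\mathfrak{L}_{\alpha,\beta,\gamma}(z)} < \mathfrak{M}_0(\alpha,\beta,\gamma)$ for all $z \in \mathbb{D}$. Adding $1$ to each term of this chain of inequalities yields
$$1 + \mathfrak{L}_0(\alpha,\beta,\gamma) < \Re\set{\mathcal{T}_{\alpha,\beta,\gamma}(z)} < 1 + \mathfrak{M}_0(\alpha,\beta,\gamma),$$
which is precisely the assertion. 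Along the way I would also note, for completeness, that $\mathcal{T}_{\alpha,\beta,\gamma}$ maps $\mathbb{D}$ univalently onto $\mathfrak{D}_{\alpha,\beta,\gamma}$: this is immediate because $\mathfrak{L}_{\alpha,\beta,\gamma}$ is univalent on $\mathbb{D}$ and a translation is a bijection of $\mathbb{C}$, so $\mathfrak{D}_{\alpha,\beta,\gamma} = (1,0) + \mathfrak{L}_{\alpha,\beta,\gamma}(\mathbb{D})$ and its boundary equation results from that of $\partial\mathfrak{D}(\alpha,\beta,\gamma)$ by the substitution $u \mapsto u-1$, exactly the equation displayed before the statement.

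There is genuinely no obstacle here; the only points that merit a moment's care are verifying the inclusion of parameter ranges (which is what makes $\mathfrak{L}_0$, $\mathfrak{M}_0$ finite and keeps the radicals and denominators defining $B_1$, $B_2$ meaningful) and that the strictness of the inequalities is preserved — it is, since $\mathfrak{L}_{\alpha,\beta,\gamma}(\mathbb{D})$ is open and the bounds in Corollary \ref{Cor1_Th-Re} are boundary values of $\Re\,\mathfrak{L}(e^{i\theta})$ not attained inside $\mathbb{D}$. Should one prefer a self-contained derivation bypassing the Corollary, the alternative is to optimize $u(\theta) = \Re\set{\mathfrak{L}(e^{i\theta})}$ from \eqref{ReImS} directly: differentiate in $\theta$, solve the resulting trigonometric equation for the interior critical point, and compare its value with those at $\theta = 0$ and $\theta = \pi$, the dichotomy $B_1$ versus $B_2$ being governed by whether that interior critical point falls in the admissible range — but this simply reproduces the proof of Theorem \ref{Th_Re}, so with Corollary \ref{Cor1_Th-Re} in hand the one-line translation argument suffices.
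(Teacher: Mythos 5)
Your proposal is correct and is exactly the paper's argument: the authors prove Theorem \ref{main1} simply by "rewriting Corollary \ref{Cor1_Th-Re} for the function $\mathcal{T}_{\alpha,\beta,\gamma}$," i.e.\ by adding $1$ to the bounds on $\Re\,\mathfrak{L}_{\alpha,\beta,\gamma}$, after observing that the hypotheses $-1<\alpha\le\beta<1$ force $\alpha\beta\neq\pm1$. Your additional remarks on strictness and univalence are sound but not needed beyond what the Corollary already supplies.
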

Now, we are ready to construct a class $\mathcal{P}_{snail}(\alpha,\beta,\gamma)$ as follows
$$\mathcal{P}_{snail}(\alpha,\beta,\gamma)=\{p\in \mathcal{P}:\ p(\mathbb{D})\subset \mathfrak{D}_{\alpha,\beta,\gamma}\}=\{p\in \mathcal{P}:\ p\prec \mathcal{T}_{\alpha,\beta,\gamma}\}.$$

\section{The classes $\mathcal{ST}_{snail}(\alpha,\beta,\gamma)$,  $\ \mathcal{CV}_{snail}(\alpha,\beta,\gamma)$ and their properties}\label{sec:4}
In this Section we give a concise presentation of some families of analytic functions related to the generalized Pascal snail $\mathcal{T}_{\alpha,\beta,\gamma}$.
We will study  some subclasses of $\mathcal{S}$ with functions analytic and univalent in $\mathbb{D}$ of the form
\begin{equation}\label{funf}
f(z)=z+\sum_{n=2}^{\infty}a_nz^n \quad (z \in \mathbb{D}).
\end{equation}
We also recall a class $\mathcal{ST}(\beta)\subset \mathcal{S}$, called \emph{starlike functions of order $0\le \beta<1$}, that consist of functions $f$ satisfying a condition
$$\Re\set{{zf'(z)}/{f(z)}}>\beta\quad (z \in \mathbb{D})$$
and a class $\mathcal{CV}(\beta)$, called \emph{convex functions of order $0\le \beta<1$}, with analytic condition
$$ \Re\set{1+{zf''(z)}/{f'(z)}}>\beta \quad (z \in \mathbb{D}).$$
\par
Let $f$ and $g$ be analytic in $\mathbb{D}$. Then the function $f$ is said to \emph{subordinate} to $g$ in $\mathbb{D}$ written by $f(z)\prec g(z)$,
if there exists a self-map of the unit disk $\omega$, analytic in $\mathbb{D}$ with $\omega(0)=0$ and  such that $f(z)=g(\omega(z))$. If $g$ is univalent in $\mathbb{D}$, then $f\prec g$ if and
only if $f(0)=g(0)$ and $f(\mathbb{D})\subset g(\mathbb{D})$.
\par
Also, let $\mathcal{ST}[\beta]$ be the subclass of $\mathcal{ST}$ defined by
\[
\mathcal{ST}[\beta]:=\left\{f\in \mathcal{A}\colon \ \frac{zf'(z)}{f(z)}\prec\frac{1}{1-\beta z} \right\}
\]
where $-1\le\beta\le 1, \beta\ne 0$. Notice that for $\beta=\pm 1$ the function $w=1/(1-\beta z)$ maps the unit disc $\mathbb{D}$ onto the half-plane
$\Re w>1/2$, and for $-1<\beta<1$  the function $w=1/(1-\beta z)$ maps the unit disc $\mathbb{D}$ onto the disc $D(C(\beta), R(\beta))$ with the center $C(\beta)=1/(1-\beta^2)$  and the radius $R(\beta)=|\beta|/(1-\beta^2)$.
\begin{lem}\label{univalent}
Let $-1<\alpha\le\beta<1$, $0\le\gamma<1$, and  $\mathfrak{L}_{\alpha,\beta,\gamma}$ be  defined by \eqref{funS}. Then $\mathfrak{L}_{\alpha,\beta,\gamma}$ is  starlike  in $\mathbb{D}$, moreover
\[\frac{\mathfrak{L}_{\alpha,\beta,\gamma}(z)}{2-2\gamma}\in \mathcal{ST}\left(\frac{1-|\alpha\beta|}{\myp{1+|\alpha|}(1+|\beta|)}\right)\quad \textit{and}\quad \frac{\mathfrak{L}_{\alpha,\beta,\gamma}(z)}{2-2\gamma}\in \mathcal{CV}\left(t_0(\alpha,\beta)\right),
\]
where
\begin{equation}\label{t_0}
0\le t_0(\alpha,\beta)=
\left\{
	\begin{array}{ll}
\dfrac{1-|\alpha|}{1+|\alpha|}+\dfrac{1-|\beta|}{1+|\beta|}-\dfrac{1+\alpha\beta}{1-\alpha\beta}\quad\textit{for}\quad \alpha\beta\ge0,\\
\dfrac{1-|\alpha|}{1+|\alpha|}+\dfrac{1-|\beta|}{1+|\beta|}-\dfrac{1-\alpha\beta}{1+\alpha\beta}\quad\textit{for}\quad \alpha\beta<0.
	\end{array}\right.
\end{equation}
Also, if $|z|=r<1$, then (see Fig.~\ref{Fig1}, Fig.~\ref{Snail12} and Fig.~\ref{Booth12})
$$\max_{|z|=r}\left|\mathfrak{L}_{\alpha,\beta,\gamma}(z)\right| =
\left\{\begin{array}{lcl}
\mathfrak{L}_{\alpha,\beta,\gamma}(r)&\textit{for}& \alpha\beta>0 \ \textit{with}\ \alpha+\beta>0\ \textit{or}\ \alpha=0, \\
-\mathfrak{L}_{\alpha,\beta,\gamma}(-r)&\textit{for}&\alpha\beta>0 \ \textit{with}\ \alpha+\beta<0 \ \textit{or}\ \beta=0, \\
 \mathfrak{L}_{\alpha,\beta,\gamma}(r) & \textit{for}& \alpha\beta<0 \ \textit{with}\ \alpha+\beta>0,\\
-\mathfrak{L}_{\alpha,\beta,\gamma}(-r) &\textit{for}& \alpha\beta<0 \ \textit{with}\ \alpha+\beta<0,\\
\left|\mathfrak{L}_{\alpha,-\alpha,\gamma}(\pm r)\right| &\textit{for}&  \alpha+\beta=0,
\end{array}\right.$$
$$\min_{|z|=r}\left|\mathfrak{L}_{\alpha,\beta,\gamma}(z)\right| =
\left\{\begin{array}{lcl}
-\mathfrak{L}_{\alpha,\beta,\gamma}(-r)&\textit{for}& \alpha\beta>0 \ \textit{with}\ \alpha+\beta>0 \ \textit{or}\  \alpha=0, \\
\mathfrak{L}_{\alpha,\beta,\gamma}(r)&\textit{for}&\alpha\beta>0 \ \textit{with}\ \alpha+\beta<0 \ \textit{or}\  \beta=0, \\
\frac{4(1-\gamma)r\sqrt{|\alpha\beta|}}{(\beta-\alpha)(1-\alpha\beta r^2)} & \textit{for}& \alpha\beta<0 \ \textit{with}\ \alpha+\beta>0,\\
\frac{4(1-\gamma)r\sqrt{|\alpha\beta|}}{(\beta-\alpha)(1-\alpha\beta r^2)} &\textit{for}& \alpha\beta<0 \ \textit{with}\ \alpha+\beta<0,\\
\left|\mathfrak{L}_{\alpha,-\alpha,\gamma}(\pm i r)\right| &\textit{for}&  \alpha+\beta=0.
\end{array}\right.$$
\end{lem}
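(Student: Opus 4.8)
The plan is to set $g(z):=\mathfrak{L}_{\alpha,\beta,\gamma}(z)/(2-2\gamma)=z/\myp{(1-\alpha z)(1-\beta z)}$, which is analytic in $\mathbb{D}$ with $g(0)=0$ and $g'(0)=1$, and to analyse $g$ by logarithmic differentiation, using throughout the elementary fact that for $|\lambda|<1$ the map $z\mapsto\lambda z/(1-\lambda z)$ carries $\mathbb{D}$ onto the disc centred at $\lambda^2/(1-\lambda^2)$ of radius $|\lambda|/(1-\lambda^2)$, so that $\Re\set{\lambda z/(1-\lambda z)}>-|\lambda|/(1+|\lambda|)$ on $\mathbb{D}$. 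From $\log g(z)=\log z-\log(1-\alpha z)-\log(1-\beta z)$ one gets $zg'(z)/g(z)=1+\alpha z/(1-\alpha z)+\beta z/(1-\beta z)$, and adding the two disc estimates gives
\[
\Re\frac{zg'(z)}{g(z)}>1-\frac{|\alpha|}{1+|\alpha|}-\frac{|\beta|}{1+|\beta|}=\frac{1-|\alpha\beta|}{(1+|\alpha|)(1+|\beta|)}\ge 0,
\]
which is precisely the asserted membership in $\mathcal{ST}\myp{(1-|\alpha\beta|)/[(1+|\alpha|)(1+|\beta|)]}$ and, in particular, shows that $g$ (hence $\mathfrak{L}_{\alpha,\beta,\gamma}$) is starlike and univalent in $\mathbb{D}$.

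For the convexity assertion I would apply the same idea to $zg'$. A short calculation gives $g'(z)=(1-\alpha\beta z^2)/\myp{(1-\alpha z)^2(1-\beta z)^2}$, which is zero-free in $\mathbb{D}$ since its zeros satisfy $|z|^2=1/|\alpha\beta|>1$; thus $1+zg''(z)/g'(z)=z(zg'(z))'/(zg'(z))$ is analytic in $\mathbb{D}$, and from $\log\myp{zg'(z)}=\log z+\log(1-\alpha\beta z^2)-2\log(1-\alpha z)-2\log(1-\beta z)$ one obtains
\[
1+\frac{zg''(z)}{g'(z)}=1-\frac{2\alpha\beta z^2}{1-\alpha\beta z^2}+\frac{2\alpha z}{1-\alpha z}+\frac{2\beta z}{1-\beta z}.
\]
Estimating the three fractions by the disc bound above (with $z^2\in\mathbb{D}$ in the first) yields $\Re\myp{1+zg''/g'}>1-2|\alpha\beta|/(1-|\alpha\beta|)-2|\alpha|/(1+|\alpha|)-2|\beta|/(1+|\beta|)$, and the identities $\frac{1-t}{1+t}=1-\frac{2t}{1+t}$, $\frac{1+t}{1-t}=1+\frac{2t}{1-t}$ identify the right-hand side with $t_0(\alpha,\beta)$ of \eqref{t_0} for both signs of $\alpha\beta$; whenever $t_0\ge 0$ this gives $g\in\mathcal{CV}(t_0(\alpha,\beta))$.

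For the extremal moduli, note that $|\mathfrak{L}_{\alpha,\beta,\gamma}(re^{i\theta})|=(2-2\gamma)r/\myp{|1-\alpha re^{i\theta}|\,|1-\beta re^{i\theta}|}$, so maximising (resp. minimising) $|\mathfrak{L}_{\alpha,\beta,\gamma}|$ on $|z|=r$ amounts to minimising (resp. maximising) over $x=\cos\theta\in[-1,1]$ the quadratic $P(x)=(1+\alpha^2r^2-2\alpha rx)(1+\beta^2r^2-2\beta rx)$, whose leading coefficient is $4\alpha\beta r^2$, whose linear coefficient simplifies to $-2r(\alpha+\beta)(1+\alpha\beta r^2)$, and whose vertex is thus at $x^*=(\alpha+\beta)(1+\alpha\beta r^2)/(4\alpha\beta r)$. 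If $\alpha\beta>0$ the parabola opens upward and the bounds $|\alpha+\beta|\ge 2\sqrt{|\alpha\beta|}$, $1+|\alpha\beta|r^2\ge 2r\sqrt{|\alpha\beta|}$ give $|x^*|>1$, so $P$ is monotone on $[-1,1]$ and its extrema are attained at $x=\pm1$, that is at $z=\pm r$, where $P(1)=(1-\alpha r)^2(1-\beta r)^2$ and $P(-1)=(1+\alpha r)^2(1+\beta r)^2$; taking square roots, tracking the signs of $1\pm\alpha r$ and $1\pm\beta r$, and comparing $P(1)$ with $P(-1)$ by $\mathrm{sgn}(\alpha+\beta)$ yields the first two (and the degenerate $\alpha=0$, $\beta=0$) lines of the $\max$- and $\min$-formulae. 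If $\alpha+\beta=0$ then $\mathfrak{L}_{\alpha,-\alpha,\gamma}(z)=(2-2\gamma)z/(1-\alpha^2z^2)$ and $|1-\alpha^2z^2|$ runs from $1-\alpha^2r^2$ at $z=\pm r$ to $1+\alpha^2r^2$ at $z=\pm ir$, giving the last lines. If $\alpha\beta<0$ the parabola opens downward; its minimum on $[-1,1]$ sits at whichever endpoint is farther from $x^*$, producing $\max_{|z|=r}|\mathfrak{L}_{\alpha,\beta,\gamma}|=\mathfrak{L}_{\alpha,\beta,\gamma}(r)$ or $-\mathfrak{L}_{\alpha,\beta,\gamma}(-r)$ according to $\mathrm{sgn}(\alpha+\beta)$, while its maximum is attained at $x^*$, where completing the square gives $P(x^*)=(\beta-\alpha)^2(1-\alpha\beta r^2)^2/(4|\alpha\beta|)$, whence $\min_{|z|=r}|\mathfrak{L}_{\alpha,\beta,\gamma}(z)|=4(1-\gamma)r\sqrt{|\alpha\beta|}/\myp{(\beta-\alpha)(1-\alpha\beta r^2)}$.

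The main obstacle, I expect, is the placement of the vertex $x^*$ in the case $\alpha\beta<0$: the value $P(x^*)$ is the maximum of $P$ over the whole circle only when $x^*\in[-1,1]$, i.e.\ when $|\alpha+\beta|(1+\alpha\beta r^2)\le 4r|\alpha\beta|$, so this inequality is what the argument must establish (or impose as a hypothesis), since otherwise the extremising point slides to $z=\pm r$ and the minimum modulus becomes $-\mathfrak{L}_{\alpha,\beta,\gamma}(-r)$ or $\mathfrak{L}_{\alpha,\beta,\gamma}(r)$ instead. The companion --- but reversed --- inequality keeping $x^*$ out of $[-1,1]$ when $\alpha\beta>0$, together with the sign bookkeeping that turns the endpoint values $P(\pm1)$ into $\mathfrak{L}_{\alpha,\beta,\gamma}(\pm r)$, is the other spot demanding care; everything else is routine computation.
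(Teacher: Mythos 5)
Your argument follows the same route as the paper's: logarithmic differentiation of $\mathfrak{L}_{\alpha,\beta,\gamma}$ and of $z\mathfrak{L}'_{\alpha,\beta,\gamma}$ for the starlikeness and convexity claims, and a critical-point analysis of $\left|\mathfrak{L}_{\alpha,\beta,\gamma}(re^{i\theta})\right|^2$ for the moduli; your quadratic $P(x)$ in $x=\cos\theta$ is exactly the paper's equation $4\alpha\beta r\cos\theta-(\alpha+\beta)(1+\alpha\beta r^2)=0$ in disguise. In one respect you go further than the paper: for the $\mathcal{CV}(t_0)$ claim the paper only reduces the problem to ``estimating $g(t)$'' on $[-1,1]$ and stops, whereas your three disc bounds applied to $1-\frac{2\alpha\beta z^2}{1-\alpha\beta z^2}+\frac{2\alpha z}{1-\alpha z}+\frac{2\beta z}{1-\beta z}$ actually produce the number $t_0(\alpha,\beta)$ in both sign cases, so that part of your write-up is more complete than the source.

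The obstacle you flag in the case $\alpha\beta<0$ is not a technicality you failed to check --- it is a genuine defect, and the paper does not resolve it either: its proof simply lists ``the solutions of the equation \eqref{eq1}'' among the critical points without verifying they exist. They need not. Your condition $x^*\in[-1,1]$, i.e. $|\alpha+\beta|\,(1+\alpha\beta r^2)\le 4r|\alpha\beta|$, fails for every sufficiently small $r$ whenever $\alpha+\beta\neq0$ (the left side tends to $|\alpha+\beta|>0$, the right side to $0$), and for some parameters (e.g. $\alpha=-0.1$, $\beta=0.5$, where $|x^*|=2(1-0.05r^2)/r>1$ for all $r<1$) it fails on the whole range. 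In those situations the quantity $\frac{4(1-\gamma)r\sqrt{|\alpha\beta|}}{(\beta-\alpha)(1-\alpha\beta r^2)}=2(1-\gamma)r/\sqrt{P(x^*)}$ is only a strict lower bound, the true minimum being $-\mathfrak{L}_{\alpha,\beta,\gamma}(-r)$ or $\mathfrak{L}_{\alpha,\beta,\gamma}(r)$ at an endpoint; one sees this directly for small $r$, where $\min_{|z|=r}|\mathfrak{L}_{\alpha,\beta,\gamma}(z)|\sim 2(1-\gamma)r$ while the displayed formula is $\sim 2(1-\gamma)r\cdot\frac{2\sqrt{|\alpha\beta|}}{\beta-\alpha}<2(1-\gamma)r$ by AM--GM. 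So the third and fourth lines of the $\min$-formula, as stated in the lemma and as ``proved'' in the paper, are correct only under the additional hypothesis you identified; everything else in your proposal (the $\max$ lines, the $\alpha\beta>0$ and $\alpha+\beta=0$ cases, and the vertex-exclusion argument $|x^*|>1$ for $\alpha\beta>0$) is sound.
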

\begin{proof}
A straightforward calculation shows that $G:=\mathfrak{L}_{\alpha,\beta,\gamma}$ satisfy
\begin{equation*} \Re\set{\frac{zG'(z)}{G(z)}}=1+ \Re\set{\frac{\alpha z}{1-\alpha  z}}+ \Re\set{\frac{\beta z}{1-\beta  z}}>1-\frac{|\alpha|}{1+|\alpha|}-\frac{|\beta|}{1+|\beta|},
\end{equation*}
from which the result concerning starlikeness follows.

In addition, we have
\[
1+\frac{zG''(z)}{G'(z)}=\frac{1+\alpha z}{1-\alpha z}+\frac{1+\beta z}{1-\beta z}-\frac{1+\alpha\beta z^2}{1-\alpha\beta z^2}\quad (z\in \mathbb{D}).
\]
Thus for $\theta\in [0,2\pi)$
\[
\Re\left\{1+\frac{e^{i \theta}G''(e^{i \theta})}{G'(e^{i \theta})}\right\}=\frac{1-\alpha^2}{1+\alpha^2-2\alpha\cos\theta}+\frac{1-\beta^2}{1+\beta^2-2\beta\cos\theta}-\frac{1-\alpha^2\beta^2}{1+\alpha^2\beta^2-2\alpha\beta\cos2\theta}.
\]
A convexity result yield from the estimating the value of the function  $g(t)$ of the variable $t:=\cos\theta$ of the form
\[
g(t):=\frac{1-\alpha^2}{1+\alpha^2-2\alpha  t}+\frac{1-\beta^2}{1+\beta^2-2\beta  t}-\frac{1-\alpha^2\beta^2}{(1+\alpha\beta)^2-4\alpha\beta t^2},
\]
where $-1\le t\le 1$.
\newline
 In order to prove the second part of lemma, define for  $\theta\in [0,2\pi)$ the function
 \[
 Q(\theta):=\left|\mathfrak{L}_{\alpha,\beta,\gamma}\myp{re^{i \theta}}\right|^2=\frac{4(1-\gamma)^2r^2}{\myp{1+\alpha^2r^2-2\alpha\, r\cos\theta}\myp{1+\beta^2
  r^2-2\beta\, r \cos\theta}}\quad (0<r<1).
 \]
We see that $\min$ or $\max$ of $Q(\theta)$ are attained at the critical points of the above function, equivalently
 \[
8(1-\gamma)^2r^3 \sin\theta \left(4\alpha\beta r \cos\theta-(\alpha+\beta)(1+\alpha\beta r^2)\right)  =0.
 \]
For $\alpha\beta= 0$ and $\alpha+\beta\neq 0$ the only ones critical points are $\theta=0, \theta=\pi$. Next, let $\alpha\beta> 0$. Then, similarly,   $Q'(\theta)=0$ for $\theta=0$ and $\theta=\pi$ since $|(\alpha+\beta)(1+\alpha\beta r^2)/4\alpha\beta|\le 1$ does not hold.
If $\alpha+\beta>0$, then for such $\theta$  we have
 \[
-\mathfrak{L}_{\alpha,\beta,\gamma}(-r) \le \left|\mathfrak{L}_{\alpha,\beta,\gamma}\left(r e^{i\theta}\right)\right|\le \mathfrak{L}_{\alpha,\beta,\gamma}(r).
 \]
And, if $\alpha+\beta<0$, then for   we obtain
 \[
\mathfrak{L}_{\alpha,\beta,\gamma}(r) \le \left|\mathfrak{L}_{\alpha,\beta,\gamma}\left(r e^{i\theta}\right)\right|\le   -\mathfrak{L}_{\alpha,\beta,\gamma}(-r).
 \]
 For the case $\alpha\beta< 0$, the  critical points are $\theta=0, \theta=\pi$, and the solutions of the equation
\begin{equation}\label{eq1}
4\alpha\beta r \cos\theta-(\alpha+\beta)(1+\alpha\beta r^2)=0.
\end{equation}
We consider three separate cases, the first is $\alpha+\beta>0$. Then for critical points $\theta=0$ and  solutions of the equation \eqref{eq1}. In this case  we have
\[
 \frac{4(1-\gamma)r\sqrt{|\alpha\beta|}}{(\beta-\alpha)(1-\alpha\beta r^2)} \le \left|\mathfrak{L}_{\alpha,\beta,\gamma}\left(r e^{i\theta}\right)\right|\le \mathfrak{L}_{\alpha,\beta,\gamma}(r).
 \]
The second case is $\alpha+\beta<0$. Then for critical points $\theta=\pi$ and  solutions of the equation \eqref{eq1},  we obtain
 \[
 \frac{4(1-\gamma)r\sqrt{|\alpha\beta|}}{(\beta-\alpha)(1-\alpha\beta r^2)} \le \left|\mathfrak{L}_{\alpha,\beta,\gamma}\left(r e^{i\theta}\right)\right|\le- \mathfrak{L}_{\alpha,\beta,\gamma}(-r).
 \]
Finally for the case $\alpha+\beta=0$, the  critical points are $\theta=0, \theta=\pi/2, \theta=\pi, \theta=3\pi/2$ and $\theta=2\pi$.  For such $\theta$  we conclude
\[
\frac{2(1-\gamma)r}{1+\alpha^2r^2}=\left|\mathfrak{L}_{\alpha,-\alpha,\gamma}(\pm i r)\right| \le \left|\mathfrak{L}_{\alpha,-\alpha,\gamma}\left(r e^{i\theta}\right)\right|\le \left|\mathfrak{L}_{\alpha,-\alpha,\gamma}(\pm r)\right|=\frac{2(1-\gamma)r}{1-\alpha^2r^2}.
\]
 \end{proof}
 Let $\alpha=\pm\beta$. Form \eqref{t_0}, the function $\mathfrak{L}_{\alpha,\alpha,\gamma}(z)/(2-2\gamma)$ is univalent in $\mathbb{D}$ if
 $t_0(\alpha,\alpha)=\frac{1+\alpha^2-4|\alpha|}{1-\alpha^2}\ge0$ and this is equivalent to the range $-2+\sqrt{3}\le \alpha\le 2-\sqrt{3}$. 

 Now, we define a family of functions related to the Pascal snail $\mathcal{T}_{\alpha,\beta,\gamma}$ and present various relations of that family with the previously known classes.
\begin{defn}\label{def1}
For $-1 < \alpha\le\beta< 1$, and $0\le \gamma<1$ with $\gamma \ge \mathcal{T}_0(\alpha,\beta)$ and $\mathcal{T}_0(\alpha,\beta)$ defined by
\begin{equation}\label{eq_Con_RE}
\mathcal{T}_0(\alpha,\beta)=\left\{\begin{array}{lcl}
1-\frac{(1-\alpha\beta)[2\, \sqrt{\alpha\beta(1-\alpha^2)(1-\beta^2)}+(\alpha+\beta)(1-\alpha\beta)]}{(1+\alpha\beta)^2}&\textit{for}& (\alpha,\beta)\in B_1, \\
1-\frac{(1+\alpha)(1+\beta)}{2}& & otherwise,
\end{array}\right.
\end{equation}
 let $\mathcal{ST}_{snail}(\alpha,\beta,\gamma)$ denote the subfamily of $\mathcal{S}$ consisting of the functions $f$, satisfying the condition
\begin{equation}\label{def_main}zf'(z)/f(z)\prec \mathcal{T}_{\alpha,\beta,\gamma}(z) \quad (z\in \mathbb{D}),\end{equation}
and let $\mathcal{CV}_{snail}(\alpha,\beta,\gamma)$ be a class of analytic functions $f$
such that
\begin{equation}\label{def_main1}
1+zf''(z)/f'(z)\prec \mathcal{T}_{\alpha,\beta,\gamma}(z)\quad (z\in \mathbb{D}),
\end{equation}
 where $\mathcal{T}_{\alpha,\beta,\gamma}$ is given by \eqref{eq_def_funT}.  Geometrically, the condition \eqref{def_main} and \eqref{def_main1} means that the expression ${zf'(z)}/{f(z)}$ or $1+{zf''(z)}/{f'(z)}$ lies in a domain bounded by the generalized Pascal snail $\mathcal{T}_{\alpha,\beta,\gamma}$   (Fig.~\ref{Fig3}) given by
 \begin{equation*}
\left[(u-1)^2+v^2-a(u-1)\right]^2 = c^2(u-1)^2+d^2v^2,\end{equation*}
where $a, c$ and $d$ given by \eqref{acd}.

By the properties of $\mathcal{T}_{\alpha,\beta,\gamma}$, given in Theorem \ref{eq_def_funT} we have
\begin{equation}\label{eq_def_main}
\Re\set{zf'(z)/f(z)} >1+\mathfrak{L}_0\quad (z\in \mathbb{D}),
\end{equation}
for $f \in \mathcal{ST}_{snail}(\alpha,\beta,\gamma)$, and for $f \in \mathcal{CV}_{snail}(\alpha,\beta,\gamma)$
\begin{equation}\label{eq_def_main1}
\Re\set{1+zf''(z)/f'(z)}>1+\mathfrak{L}_0 \quad (z\in \mathbb{D}),
\end{equation}
where $\mathfrak{L}_0= \mathfrak{L}_0(\alpha,\beta,\gamma)$ is given in Corollary \ref{Cor1_Th-Re}.

Additionally  $\mathcal{CV}_{snail}(\alpha,\beta,\gamma)\subset \mathcal{G}$ for $\gamma$ satisfying
\begin{equation}\label{eq_Con_one_dir}
\gamma \ge\gamma_0(\alpha,\beta) = \left\{\begin{array}{lcl}
1-\frac{3(1-\alpha\beta)[2\, \sqrt{\alpha\beta(1-\alpha^2)(1-\beta^2)}+(\alpha+\beta)(1-\alpha\beta)]}{2(1+\alpha\beta)^2}&\textit{for}& (\alpha,\beta)\in B_1, \\
1-\frac{3(1+\alpha)(1+\beta)}{4}& & otherwise,
\end{array}\right.
\end{equation}
where $\mathcal{G}$ is the family of function univalent, convex in one direction, and satisfying $ \Re\{1+zf''(z)/f'(z)\} > -1/2$, see  \cite{Ozaki}.

Taking into account \eqref{t_0} the function $\mathfrak{L}_{\alpha,\alpha,\gamma}(z)/(2-2\gamma)\in \mathcal{G}$ for $|\alpha|\le 4-\sqrt{13}$,   and $\mathfrak{L}_{\alpha,-\alpha,\gamma}(z)/(2-2\gamma)\in \mathcal{G}$  for $-\sqrt{6-\sqrt{33}}\le \alpha<0$, \cite{KJ}.

Summarizing,  $\mathcal{T}_{\alpha,\beta,\gamma}$ is a analytic univalent function with positive real part in $\mathbb{D}$, $\mathcal{T}_{\alpha,\beta,\gamma}(\mathbb{D})$ is symmetric with respect to the real axis, starlike with respect  to $\mathcal{T}_{\alpha,\beta,\gamma}(0)=1$ and   convex in one direction under some conditions on $\alpha$ and $\beta$. Moreover $\mathcal{T}'_{\alpha,\beta,\gamma}(0)=2(1-\gamma)>0$ hence $\mathcal{T}_{\alpha,\beta,\gamma}(\mathbb{D})$ satisfies Ma and Minda condition \cite{MM}. We refer to \cite{KME1,KME2,KSu,MSR} for a detailed discussion about similar subclasses of related to functions mapping the unit disk onto domains contained in a right halfplane and starlike with respect to $1$.

For $\beta=\alpha$ with $-1<\alpha<1$ and $\beta=-\alpha$ with $-1<\alpha<0$, the quantities $\mathcal{T}_0(\alpha,\beta)$ and $\gamma_0(\alpha,\beta)$ are the following
\begin{equation*}
\mathcal{T}_0(\alpha,\alpha)= \left\{\begin{array}{ll}
1-4\alpha\left(\frac{1-\alpha^2}{1+\alpha^2}\right)^2\quad&\textit{for}\quad 2-\sqrt{3}\le\alpha<1,\\
\frac{1-2\alpha-\alpha^2}{2}\quad&\textit{for}\quad -1<\alpha\le 2-\sqrt{3},
\end{array}\right.
\quad \quad \mathcal{T}_0(\alpha,-\alpha)=\frac{1+\alpha^2}{2}
\end{equation*}
and
\begin{equation*}
\gamma_0(\alpha,\alpha)= \left\{\begin{array}{ll}
1-6\alpha\left(\frac{1-\alpha^2}{1+\alpha^2}\right)^2\quad&\textit{for}\quad 2-\sqrt{3}\le\alpha<1,\\
\frac{1-6\alpha-3\alpha^2}{4}\quad&\textit{for}\quad -1<\alpha\le 2-\sqrt{3},
\end{array}\right. \quad \quad \gamma_0(\alpha,-\alpha)=\frac{1+3\alpha^2}{4}.
\end{equation*}
For $\gamma=1/2$, classes $\mathcal{ST}_{snail}(\alpha,\alpha,1/2)$ and $\mathcal{CV}_{snail}(\alpha,\alpha,1/2)$ are defined under the condition
$0\le \alpha\le \alpha_0$, where $\alpha_0=0.615331\ldots$ is a root of equation
$8\alpha \myp{1-\alpha^2}^2=\myp{1+\alpha^2}^2$. We also note that  $\mathcal{ST}_{snail}(\alpha,-\alpha,1/2)$ and $\mathcal{CV}_{snail}(\alpha,-\alpha,1/2)$ of starlike and convex functions of Ma-Minda  type \cite{MM}, can not be defined, because it should satisfy $\gamma\ge \frac{1+\alpha^2}{2}$ that is $\alpha^2\le 0$ which is impossible.
\begin{figure}[!ht]
\centering
\subfloat[$\alpha=1-\sqrt{2}, \gamma=2-\sqrt{2}\      (\gamma\ge\mathcal{T}_0(\alpha,-\alpha))$]{%
\includegraphics[width=0.45\textwidth]{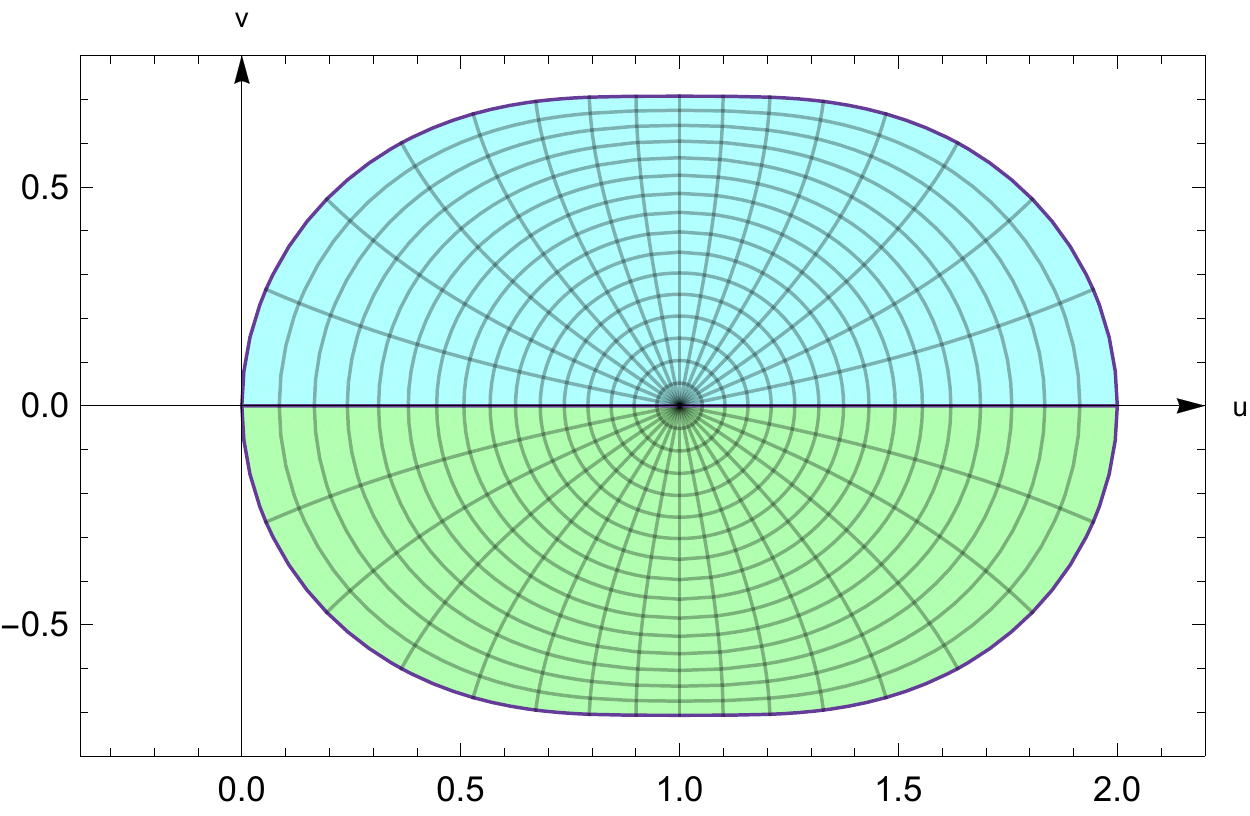}}%
\qquad\quad
\subfloat[$\alpha=1-\sqrt{2}, \gamma=1.9-\sqrt{2}\ (\gamma<\mathcal{T}_0(\alpha,-\alpha))$]{%
\includegraphics[width=0.415\textwidth]{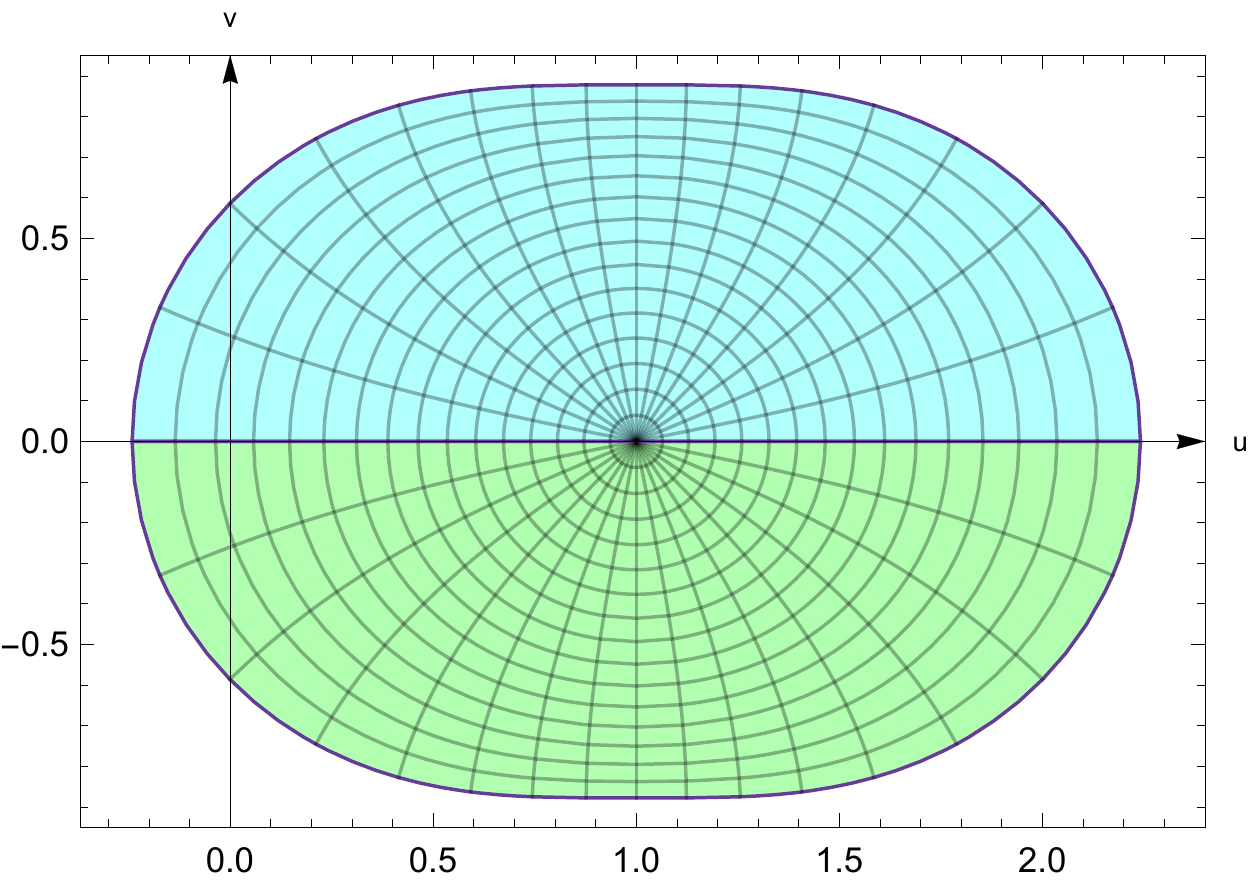}}%
\caption{Image of $\mathbb{D}$ under $\mathcal{T}_{\alpha,-\alpha,\gamma}(z)$}\label{Fig3}
\end{figure}

Further properties of $\mathcal{T}_{\alpha,\beta,\gamma}$  yield:
 \begin{align*}
 &f\in \mathcal{ST}_{snail}(\alpha,\beta, \gamma) \Longrightarrow \phi(z):=\int_{0}^{z}\left(\frac{f(t)}{t}\right)^{-\frac{1}{\mathfrak{L}_0}}dt\in \mathcal{CV}.
\end{align*}
Indeed, by logarithmic differentiation of  $\phi'(z)=\left(\frac{f(z)}{z}\right)^{-\frac{1}{\mathfrak{L}_0}}$  we obtain
\[
1+\frac{z\phi''(z)}{\phi'(z)}=1-\frac{1}{\mathfrak{L}_0}\left(\frac{zf'(z)}{f(z)}-1\right)=1+\frac{1}{\mathfrak{L}_0}-\frac{1}{\mathfrak{L}_0}
\frac{zf'(z)}{f(z)}\quad (z \in \mathbb{D}).
\]
Since $f\in \mathcal{ST}_{snail}(\alpha,\beta, \gamma) $, we conclude that
\[
\Re\left\{1+\frac{z\phi''(z)}{\phi'(z)} \right\}=1+\frac{1}{\mathfrak{L}_0}-\frac{1}{\mathfrak{L}_0}\Re\left\{ \frac{zf'(z)}{f(z)}\right\}>0\quad  (z \in \mathbb{D}).
\]
The equivalence $g\in \mathcal{ST}_{snail}(\alpha,\beta,\gamma)$ if and only if $zg'(z)/g(z)\prec \mathcal{T}_{\alpha,\beta,\gamma}(z)$ allows to determine the structural formula for functions in $\mathcal{ST}_{snail}(\alpha,\beta,\gamma)$. A function $g$ is in the class $\mathcal{ST}_{snail}(\alpha,\beta,\gamma)$
if and only if there exists an analytic function  $p\prec \mathcal{T}_{\alpha,\beta,\gamma}$, such that
\begin{equation}\label{rep_ST_snail}
	g(z)=z\exp\myp{ \int_{0}^{z} \frac{p(t)-1}{t}\, dt}.
\end{equation}
 \end{defn}
\noindent The above integral representation provides many examples of functions of the class $\mathcal{ST}_{snail}(\alpha,\beta,\gamma)$. Let
 $p(z)=\mathcal{T}_{\alpha,\beta,\gamma}(z^n) \in \mathcal{ST}_{snail}(\alpha,\beta,\gamma)$ for $n=1,2,\ldots$. Then, for $\alpha\neq \beta,\ n\ge 1$,   the function
\begin{align}\label{eq_psi_alpha_n}
\Psi_{\alpha,\beta,\gamma,n}(z)&=z\exp\myp{ \int_{0}^{z}\frac{2(1-\gamma)t^{n-1}}{(1-\alpha t^n)(1-\beta t^{n)}}\,dt}=z\myp{\frac{1-\beta z^n}{1- \alpha z^n}}^{\frac{2(1-\gamma)}{n\myp{\alpha-\beta}}}\\
&=z+\dfrac{2(1-\gamma)}{n}z^{n+1}+\dfrac{(1-\gamma)[2(1-\gamma)+n(\alpha+\beta)]}{n^2}z^{2n+1}+\cdots,\notag
\end{align}
is extremal  for several problems in the class  $\mathcal{ST}_{snail}(\alpha,\beta,\gamma)$.
For  $n=1$ we have
\begin{equation}\label{eq_psi_alpha}
\Psi_{\alpha,\beta,\gamma}(z):= \Psi_{\alpha,\beta,\gamma,1}(z)=z\myp{\frac{1-\beta z}{1- \alpha z}}^{\frac{2(1-\gamma)}{\alpha-\beta}},
\end{equation}
and for $\alpha=\beta$
\begin{equation}\label{eq_psi_2alpha_n}
\Psi_{\alpha,\alpha,\gamma,n}(z)=z\exp\left(\frac{2(1-\gamma)z^n}{n(1- \alpha z^n)}\right)
=z+\frac{2 (1-\gamma ) z^{n+1}}{n(1-\alpha z^n)}+\cdots
\end{equation}
and
\begin{equation}\label{eq_psi_2alpha_1} \Psi_{\alpha,\alpha,\gamma}(z):=\Psi_{\alpha,\alpha,\gamma,1}(z)=z\exp\left(\frac{2(1-\gamma)z}{1- \alpha z}\right).
\end{equation}
We note that $\Psi_{\alpha,\beta,\gamma,n}(\mathbb{D})$ is sunflower's domain (Fig.~\ref{Flower}).

\begin{figure}[h]
\centering
\includegraphics[width=0.438\textwidth]{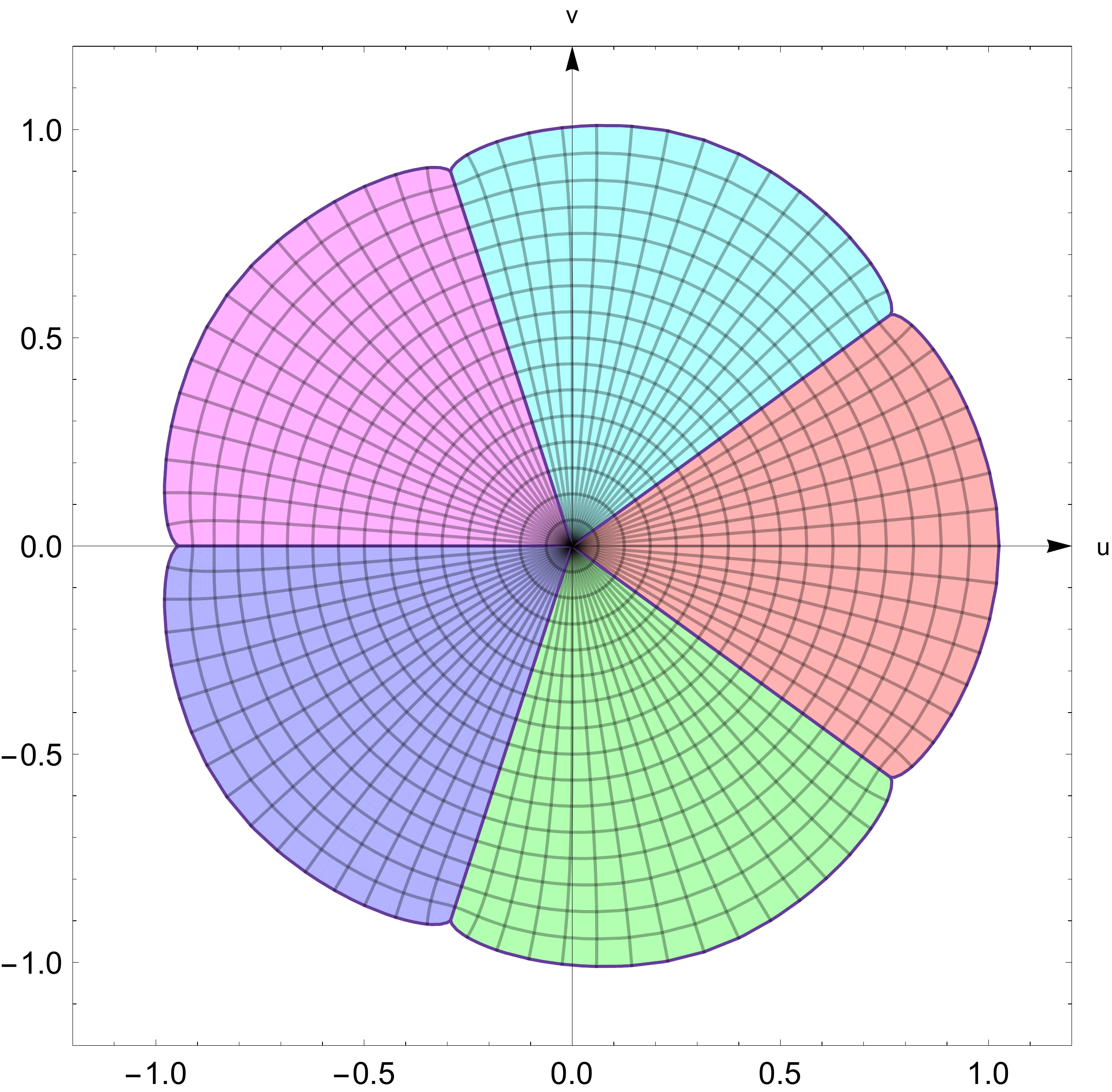}\quad\includegraphics[width=0.44\textwidth]{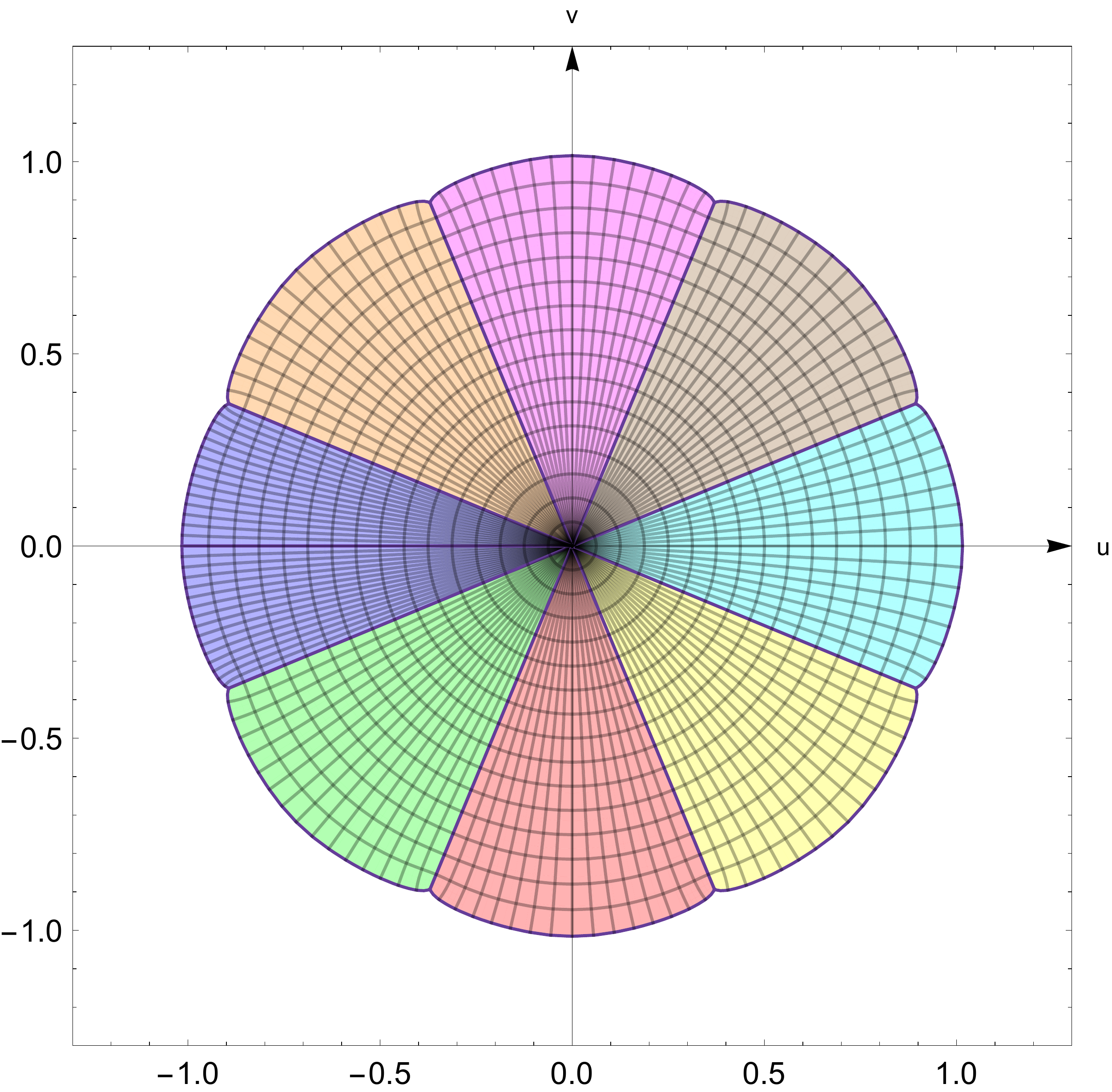}%
\caption{$\Psi_{\alpha,\beta,\gamma,n}(\mathbb{D})$ for $\alpha=-0.9,\beta=0.4,\gamma=0.93, n=5,8$.}\label{Flower}
\end{figure}

Indeed, for $\alpha \neq \beta$ let $$G(t) = \left|\Psi_{\alpha,\beta,\gamma,n}(e^{it})\right|=\left(\frac{1+\beta^2-2\beta\cos\, nt}{1+ \alpha^2-2\alpha\cos\, nt }\right)^{p},$$ where $p=\frac{1-\gamma}{n\myp{\alpha-\beta}}$ and $n\ge 2$. Since
$$G'(t) = \left(\frac{1+\beta^2-2\beta\cos\, nt}{1+ \alpha^2-2\alpha\cos\, nt }\right)^{p-1}\frac{pn(\beta-\alpha)(1-\alpha\beta)\sin\, nt}{({1+ \alpha^2-2\alpha\cos\, nt })^2},$$
we see that the points of extreme of modulus occur at $t=\frac{k\pi}{n}$, where $k=0,1,2,...,2(n-1)$. At these points $G(t)$ alternately attains its maximum and minimum, equal $\left(\frac{1+\beta}{1+\alpha}\right)^{2p}$ and  $\left(\frac{1-\beta}{1-\alpha}\right)^{2p}$, respectively.

\begin{figure}[h]
\centering
\subfloat[$\alpha=-0.9,\beta= 0.4, \gamma=0.2, n=5$ with $(\gamma<\mathcal{T}_0(\alpha,\beta))$]{%
\includegraphics[width=0.45\textwidth]{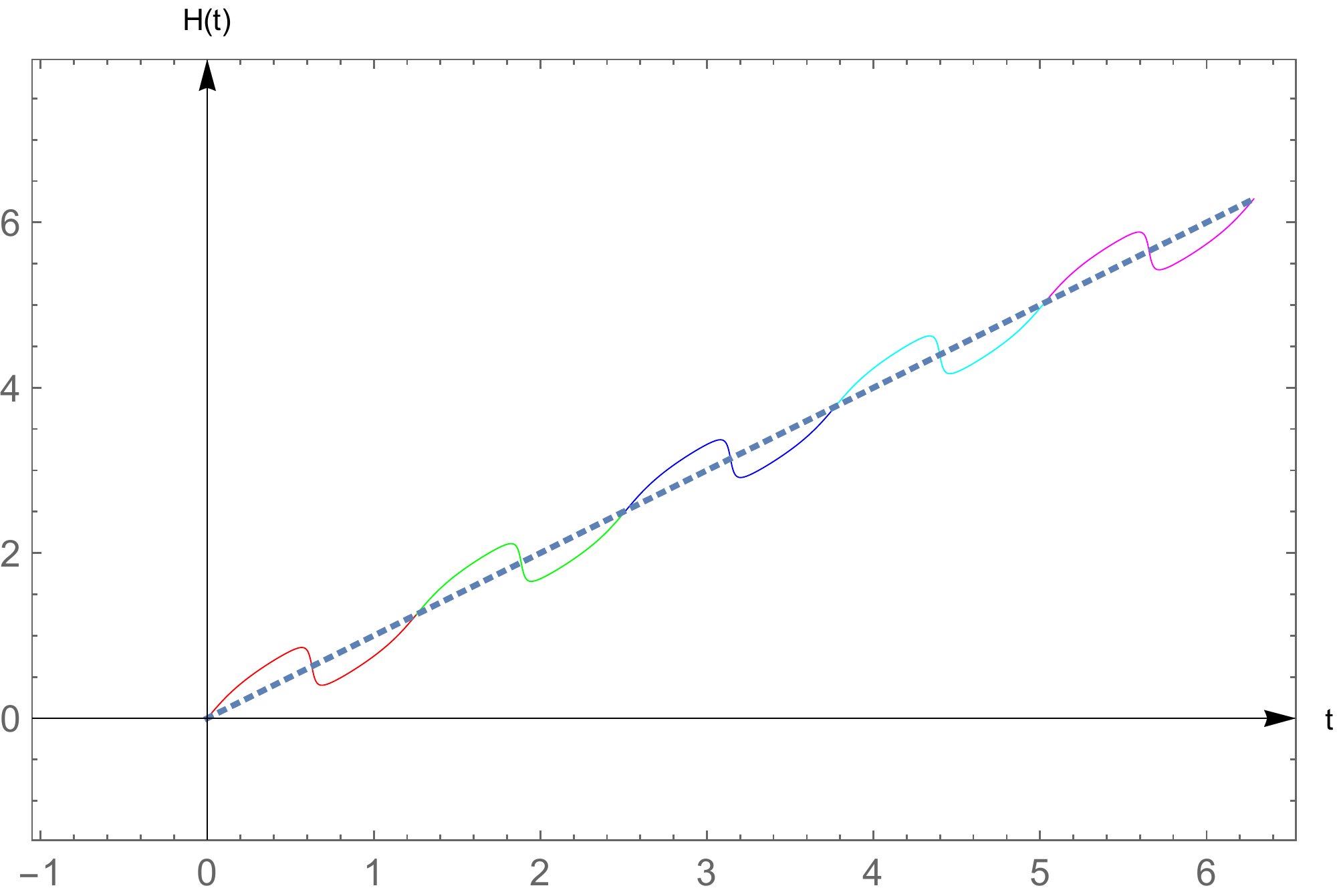}}%
\qquad\quad
\subfloat[$\alpha=0.4,\beta= 0.5, \gamma=0.3, n=5$ with $(\gamma\ge\mathcal{T}_0(\alpha,\beta))$]{%
\includegraphics[width=0.45\textwidth]{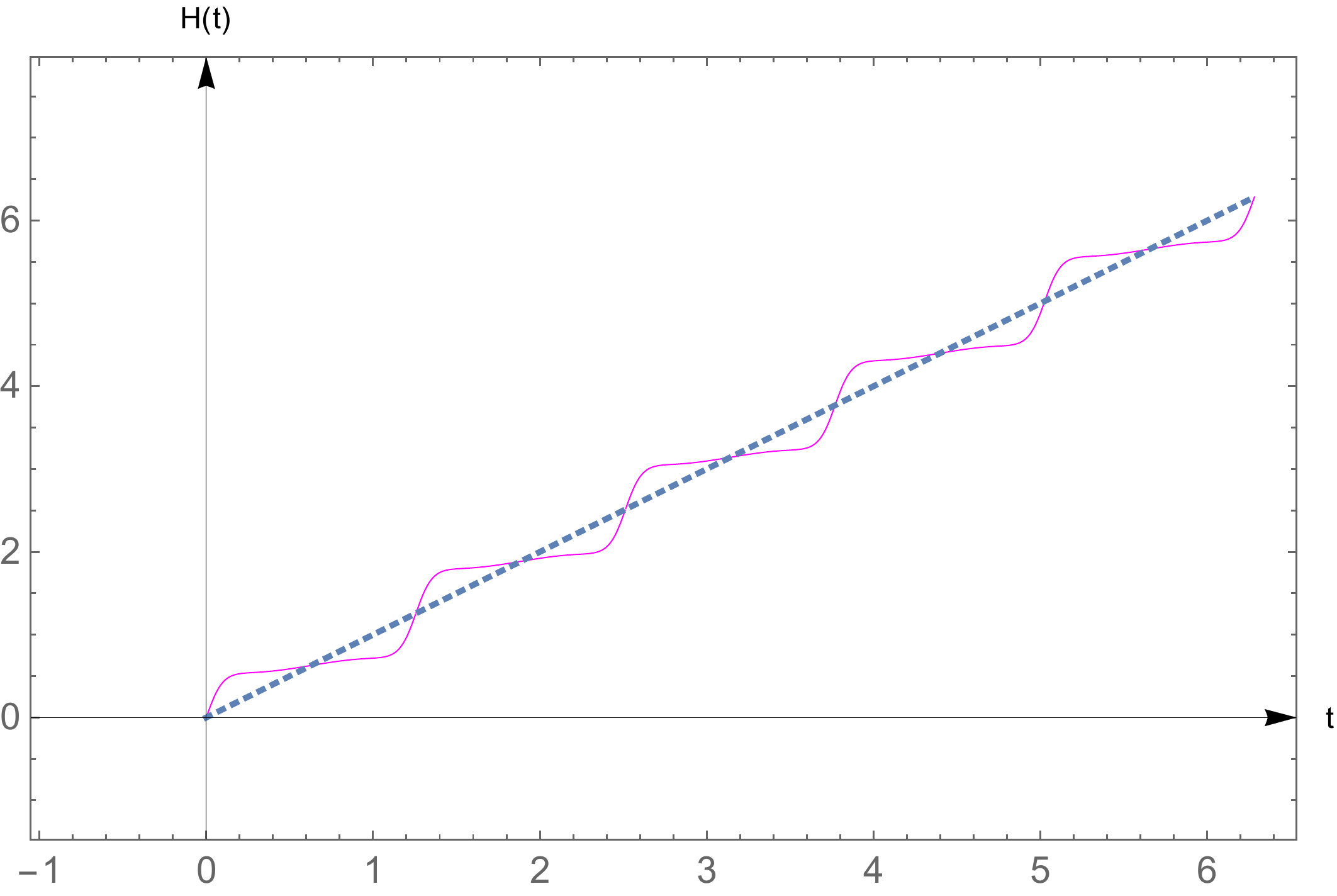}}%
\caption{Graph of function $H(t)$.}\label{Arg}
\end{figure}
Additionally, the argument of $\Psi$ i.e.
$$H(t)=\Arg\Psi_{\alpha,\beta,\gamma,n}(e^{it})=t-2p\, \tan^{-1}\frac{(\beta-\alpha)\sin\, nt}{1+\alpha\beta-(\alpha+\beta)\cos\, nt},$$
is also alternately increasing and decreasing  (see Fig. \ref{Arg}) as $t\in [0,2\pi)$  and $\gamma>\mathcal{T}_0(\alpha,\beta)$, where $\mathcal{T}_0(\alpha,\beta)$ is  defined by \eqref{eq_Con_RE}.

In the case $\alpha=\beta$ the function $G(t)$ has the form
$$G(t)=\exp\Re\left(\frac{2(1-\gamma)z^n}{n(1- \alpha z^n)}\right)= \exp\left(\frac{2(1-\gamma)(\cos nt -\alpha)}{n(1+\alpha^2-2\alpha\cos nt)}\right),$$
whose behavior is similar to the behavior of $G(t)$ for $\alpha\neq\beta$. The same situation holds for $H(t), \alpha =\beta$.

If $\gamma<\mathcal{T}_0(\alpha,\beta)$, the function $\Psi_{\alpha,\beta,\gamma,n}$ is not starlike in a whole unit disk as well as not univalent there (Fig. \ref{Fig4}).
\begin{figure}[!ht]
\centering
\includegraphics[width=0.45\textwidth]{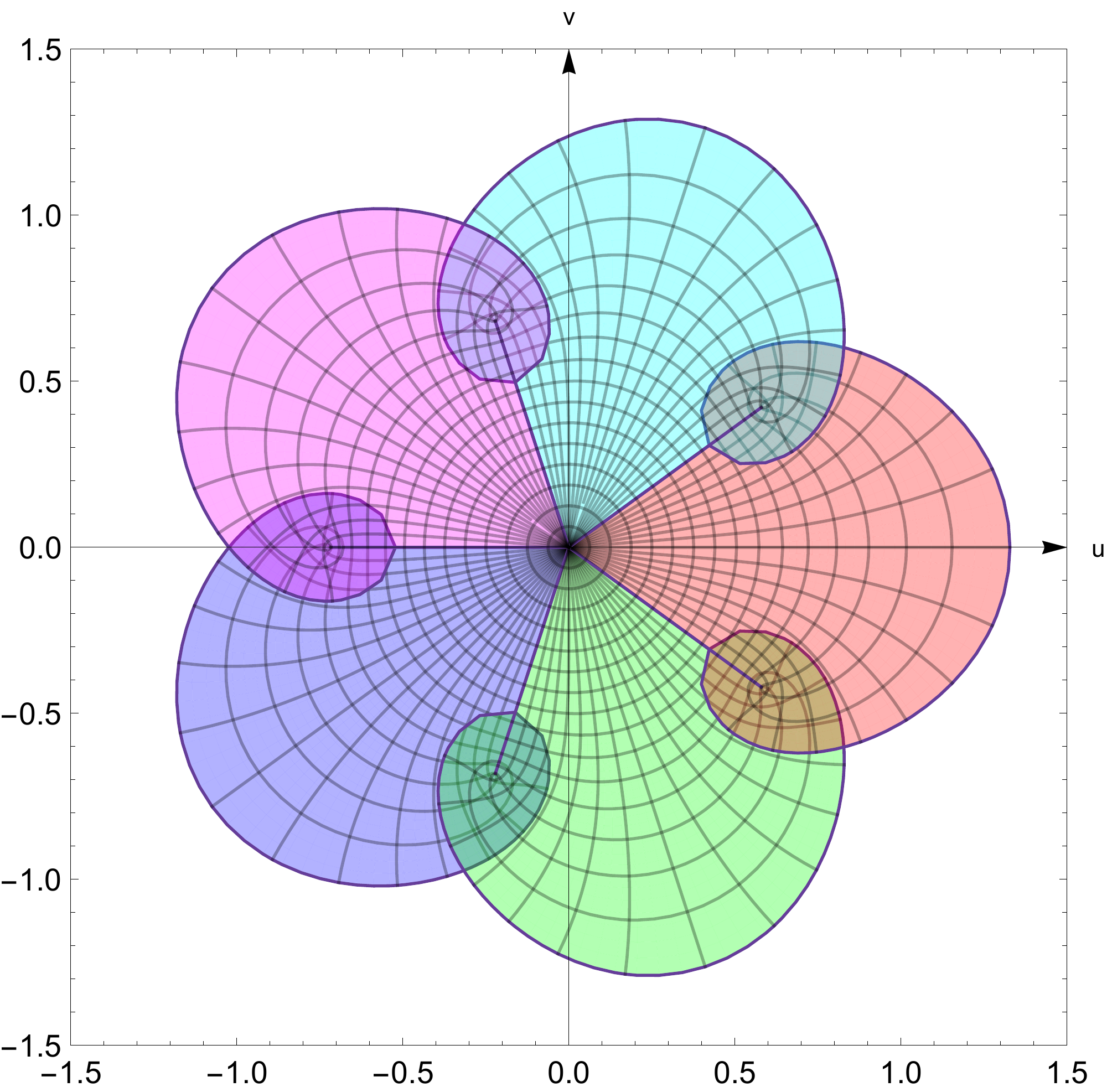}\quad\includegraphics[width=0.45\textwidth]{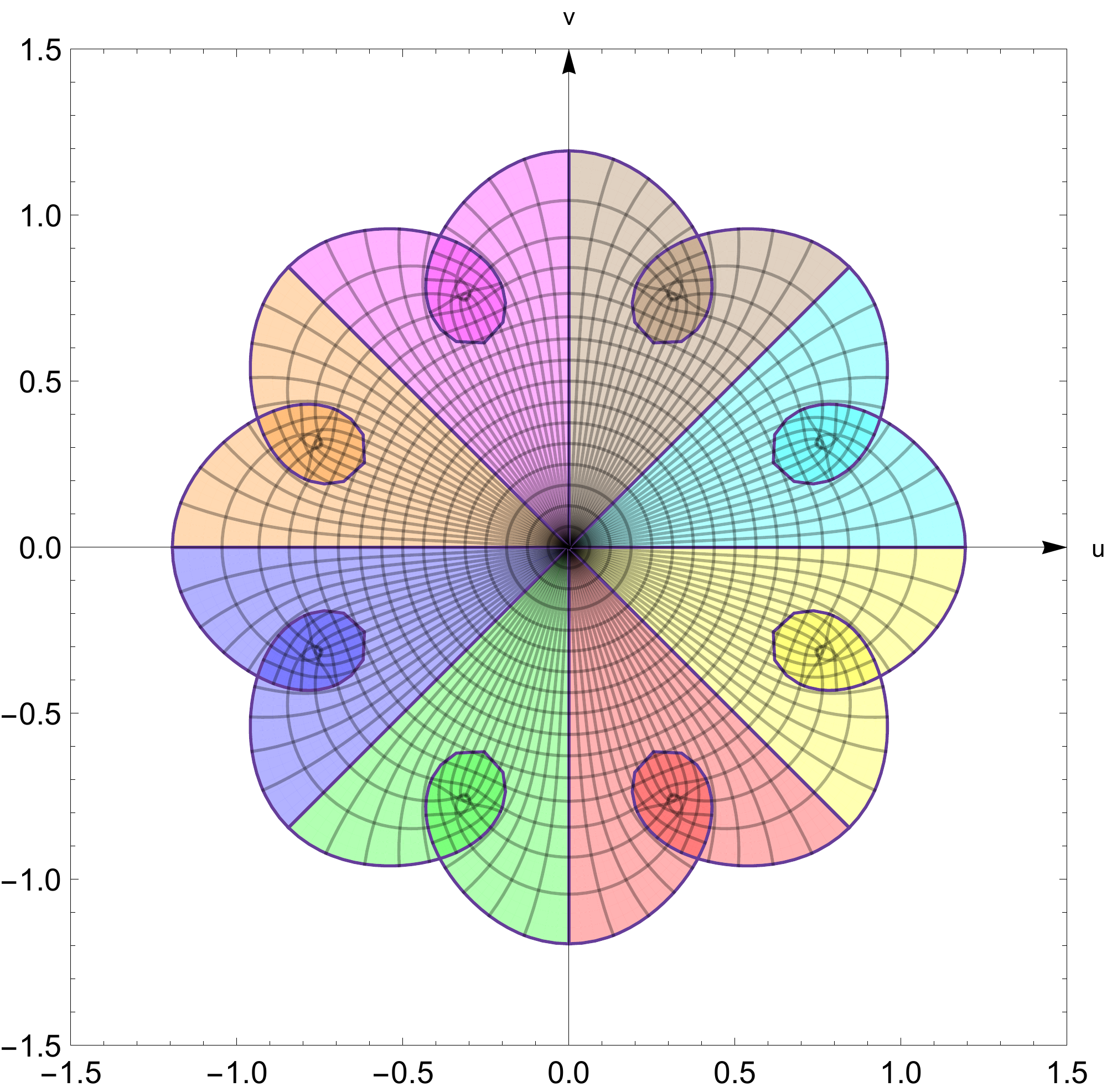}%
\caption{$\Psi_{\alpha,\beta,\gamma,n}(\mathbb{D})$ for $\alpha=-0.9,\beta=0.4,\gamma=0.2, n=5,8$ with $\gamma<\mathcal{T}_0(\alpha,\beta)$.}\label{Fig4}
\end{figure}

\noindent From Lemma \ref{univalent} it can be seen that the smallest disk with center  $(1,0)$ that contains $\mathcal{T}_{\alpha,\beta,\gamma}(z)$ and the largest disk with center at $(1,0)$ contained in $\mathcal{T}_{\alpha,\beta,\gamma}(z)$ (see Fig~\ref{Conchoid5}) are, below.
\begin{prop} Let $-1<\alpha\le\beta <1, \ \alpha\beta\neq\pm 1$. Then
\begin{equation}\label{eq_Between1}
\mathcal{T}_{\alpha,\beta,\gamma}(\mathbb{D})\supset \left\{\begin{array}{ll}
 \set{w\in \mathbb{C}\colon  |w-1|<\frac{2(1-\gamma)}{(1+\alpha)(1+\beta)}}\quad\textit{for}\quad\alpha\beta>0 \ \textit{with}\ \alpha+\beta<0\ \textit{or} \ \beta=0,\\
 \set{w\in \mathbb{C}\colon  |w-1|<\frac{2(1-\gamma)}{(1-\alpha)(1-\beta)}}\quad\textit{for}\quad \alpha\beta>0 \ \textit{with}\ \alpha+\beta>0\ \textit{or}\  \alpha=0,\\
 \set{w\in \mathbb{C}\colon  |w-1|<\frac{4(1-\gamma)\sqrt{|\alpha\beta|}}{(\beta-\alpha)(1-\alpha\beta)}}\quad\textit{for}\quad \alpha\beta<0\ \textit{with}\ \alpha+\beta\ne 0,\\
  \set{w\in \mathbb{C}\colon  |w-1|<\frac{2(1-\gamma)}{1+\alpha^2}}\quad\textit{for}\quad\alpha+\beta=0,
 \end{array}\right.
\end{equation}
\begin{equation}\label{eq_Between2}
 \mathcal{T}_{\alpha,\beta,\gamma}(\mathbb{D})\subset  \left\{\begin{array}{ll}\set{w\in \mathbb{C}\colon  |w-1|<\frac{2(1-\gamma)}{(1-\alpha)(1-\beta)}}\quad\textit{for}\quad \alpha\beta\neq0\ \textit{with}\ \alpha+\beta>0\ \textit{or}\ \alpha=0,\\
 \set{w\in \mathbb{C}\colon  |w-1|<\frac{2(1-\gamma)}{(1+\alpha)(1+\beta)}}\quad\textit{for}\quad \alpha\beta\neq0\ \textit{with}\ \alpha+\beta<0\  \textit{or}\ \beta=0,\\
\set{w\in \mathbb{C}\colon  |w-1|<\frac{2(1-\gamma)}{1-\alpha^2}}\quad\textit{for}\quad \alpha+\beta=0.
  \end{array}\right.
\end{equation}
\begin{figure}[h]
	\centering
	\includegraphics[width=0.4\textwidth]{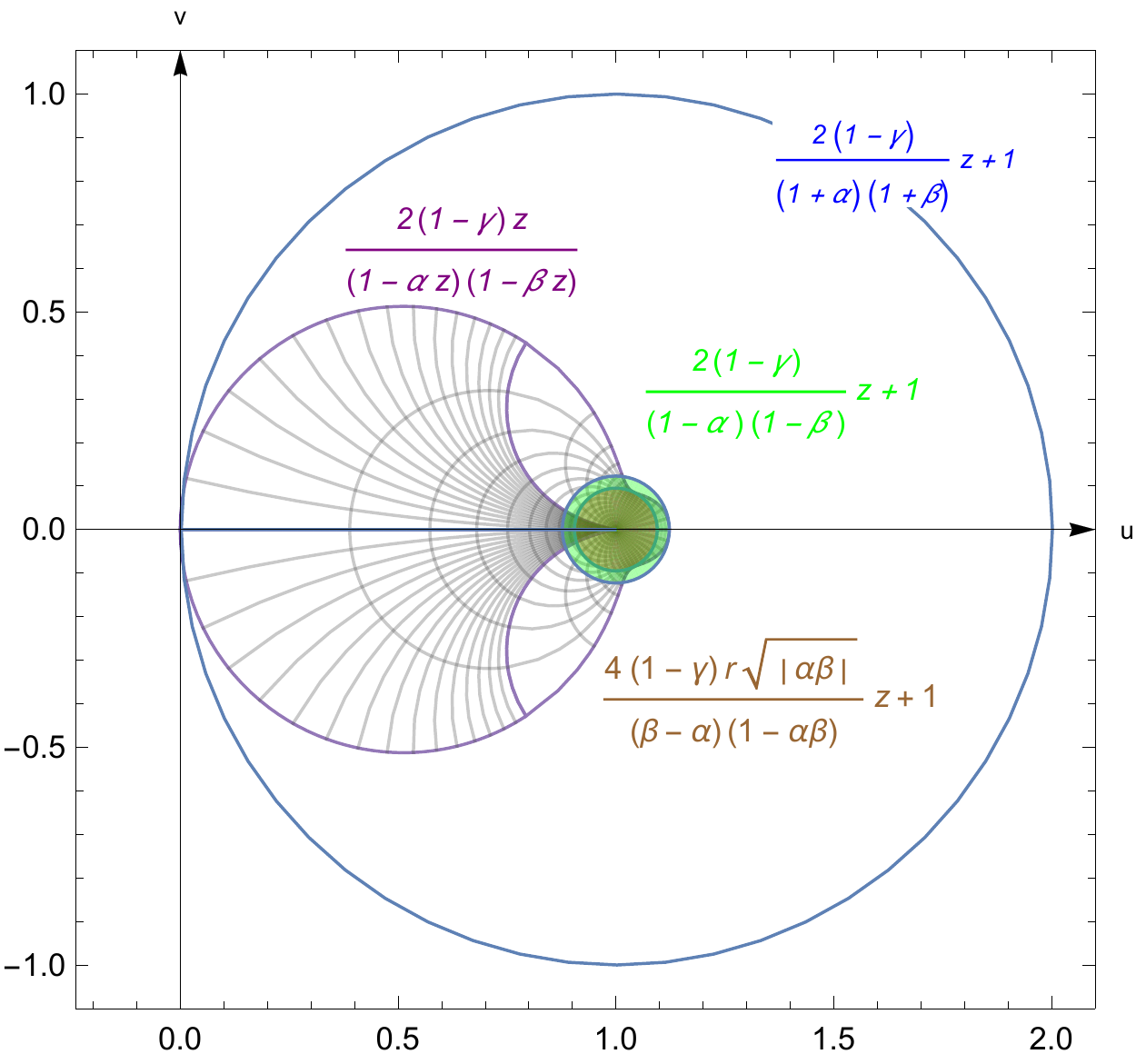}%
	\caption{The range of the functions $\mathfrak{L}_{\alpha,\beta,\gamma}$, $\frac{2(1-\gamma)}{(1+\alpha)(1+\beta)}z+1$, $\frac{2(1-\gamma)}{(1-\alpha)(1-\beta)}z+1$ and $\frac{4(1-\gamma)\sqrt{|\alpha\beta|}}{(\beta-\alpha)(1-\alpha\beta)}z+1$ for $\alpha=-0.9$, $\gamma=0.93$ and $\beta=0.4$}\label{Conchoid5}
\end{figure}
The function $\Psi_{\alpha,\beta,\gamma}$  given by \eqref{eq_psi_alpha_n}, and \eqref{eq_psi_2alpha_1} shows that the bounds are the best possible.\end{prop}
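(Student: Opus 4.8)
The plan is to read the proposition off the second half of Lemma~\ref{univalent}, which already records $\min_{|z|=r}|\mathfrak{L}_{\alpha,\beta,\gamma}(z)|$ and $\max_{|z|=r}|\mathfrak{L}_{\alpha,\beta,\gamma}(z)|$ in each of the five parameter regimes; the proposition is exactly the geometric ``$r\to1^-$'' consequence of those formulas. Write $G=\mathfrak{L}_{\alpha,\beta,\gamma}$, so that $\mathcal{T}_{\alpha,\beta,\gamma}(\mathbb{D})=1+G(\mathbb{D})$ and $|\mathcal{T}_{\alpha,\beta,\gamma}(z)-1|=|G(z)|$; then \eqref{eq_Between1} asserts that the open disk about $0$ of radius $m:=\min_{|z|=1}|G(z)|$ lies in $G(\mathbb{D})$, and \eqref{eq_Between2} that $G(\mathbb{D})$ lies in the open disk about $0$ of radius $M:=\max_{|z|=1}|G(z)|$. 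First I would note that $G$ is rational with its only poles $1/\alpha,1/\beta$ outside $\overline{\mathbb{D}}$, hence holomorphic on a neighbourhood of $\overline{\mathbb{D}}$; so $r\mapsto\min_{|z|=r}|G(z)|$ and $r\mapsto\max_{|z|=r}|G(z)|$ are continuous on $[0,1]$, and $m,M$ are just the values obtained by putting $r=1$ in Lemma~\ref{univalent}.

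For the outer inclusion \eqref{eq_Between2} I would invoke the maximum modulus principle: since $G$ is non-constant, $|G(z)|<M$ for every $z\in\mathbb{D}$, so $\mathcal{T}_{\alpha,\beta,\gamma}(\mathbb{D})\subset\{w:|w-1|<M\}$. Substituting $r=1$ into the expression for $\max_{|z|=r}|G(z)|$ from Lemma~\ref{univalent} gives $M=\tfrac{2(1-\gamma)}{(1-\alpha)(1-\beta)}$ when $\alpha+\beta>0$ or $\alpha=0$, $M=\tfrac{2(1-\gamma)}{(1+\alpha)(1+\beta)}$ when $\alpha+\beta<0$ or $\beta=0$, and $M=\tfrac{2(1-\gamma)}{1-\alpha^2}$ when $\alpha+\beta=0$, which is precisely \eqref{eq_Between2}.

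For the inner inclusion \eqref{eq_Between1} the extra ingredient is the starlikeness of $G$ with respect to $0$, also supplied by Lemma~\ref{univalent}: $G(\mathbb{D})$ is an open domain starlike about $0=G(0)$. Since $G$ is continuous on the compact set $\overline{\mathbb{D}}$ and $G(\mathbb{D})$ is open, $\partial G(\mathbb{D})\subset G(\partial\mathbb{D})$, so every $w\in\partial G(\mathbb{D})$ has $|w|\ge m$. If some $w_0$ with $|w_0|<m$ were not in $G(\mathbb{D})$, the segment $[0,w_0]$ would meet $\partial G(\mathbb{D})$ at a point of modulus $\le|w_0|<m$, a contradiction; hence $\{w:|w-1|<m\}\subset\mathcal{T}_{\alpha,\beta,\gamma}(\mathbb{D})$. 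Setting $r=1$ in the formula for $\min_{|z|=r}|G(z)|$ from Lemma~\ref{univalent} yields the four values $\tfrac{2(1-\gamma)}{(1+\alpha)(1+\beta)}$, $\tfrac{2(1-\gamma)}{(1-\alpha)(1-\beta)}$, $\tfrac{4(1-\gamma)\sqrt{|\alpha\beta|}}{(\beta-\alpha)(1-\alpha\beta)}$, $\tfrac{2(1-\gamma)}{1+\alpha^2}$ attached to the respective regimes of \eqref{eq_Between1}; the one point requiring attention here is that for $\alpha\beta<0$ the minimum is attained off the real axis at the roots of \eqref{eq1}, which is exactly the computation already carried out in the proof of Lemma~\ref{univalent}.

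Finally, both extrema are attained on $|z|=1$: if $|G(e^{i\theta_-})|=m$ and $|G(e^{i\theta_+})|=M$, then $\mathcal{T}_{\alpha,\beta,\gamma}(e^{i\theta_-}),\mathcal{T}_{\alpha,\beta,\gamma}(e^{i\theta_+})\in\partial\mathcal{T}_{\alpha,\beta,\gamma}(\mathbb{D})$ lie at distance exactly $m$, resp.\ $M$, from $1$, so neither disk can be enlarged, resp.\ shrunk; since $z\Psi_{\alpha,\beta,\gamma}'(z)/\Psi_{\alpha,\beta,\gamma}(z)=\mathcal{T}_{\alpha,\beta,\gamma}(z)$ for the functions $\Psi_{\alpha,\beta,\gamma}$ of \eqref{eq_psi_alpha} (and $\Psi_{\alpha,\alpha,\gamma}$ of \eqref{eq_psi_2alpha_1} when $\alpha=\beta$), these extremal functions witness the optimality. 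I do not expect a real obstacle: everything of substance is already in Lemma~\ref{univalent}, and the only genuine point of principle is the passage from circle‑extrema to disk‑inclusions — starlikeness for the inner disk, the maximum modulus principle for the outer one — while the remainder is the routine bookkeeping of the five parameter regimes at $r=1$.
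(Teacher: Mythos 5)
Your argument is correct and is essentially the paper's own route: the paper states this proposition without a written proof, merely noting that it "can be seen from Lemma \ref{univalent}", i.e.\ by reading off $\min_{|z|=r}|\mathfrak{L}_{\alpha,\beta,\gamma}|$ and $\max_{|z|=r}|\mathfrak{L}_{\alpha,\beta,\gamma}|$ in each parameter regime and letting $r\to1^-$. Your write-up just makes explicit the two bridging steps the paper leaves implicit (maximum modulus for the outer disk, starlikeness/connectedness from $0=\mathfrak{L}_{\alpha,\beta,\gamma}(0)$ for the inner disk), and correctly handles the off-axis minimum in the case $\alpha\beta<0$.
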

\begin{thm}\label{th_ST_con}
Let $-1 < \alpha\le\beta< 1$,  and let  $f$ be analytic in $\mathbb{D}$. If $P_f=f/f'\in \mathcal{ST}_{snail}(\alpha,\beta,\gamma)$, then
\[
\frac{zf'(z)}{f(z)}\prec\frac{z}{\Psi_{\alpha,\beta,\gamma}(z)}\quad (z \in \mathbb{D}).
\]
\end{thm}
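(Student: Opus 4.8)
The plan is to recast the assertion as a first-order differential subordination whose dominant is $z/\Psi_{\alpha,\beta,\gamma}$, with all the substance residing in the starlikeness of $\mathfrak{L}_{\alpha,\beta,\gamma}$ established in Lemma~\ref{univalent}.

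First I would extract the algebraic identity that powers the theorem. Since $P_f=f/f'\in\mathcal{ST}_{snail}(\alpha,\beta,\gamma)\subset\mathcal{S}$, writing $g:=P_f$ we have $g(z)=z+\cdots$, $g$ nonvanishing on $\mathbb{D}\setminus\{0\}$ (whence $f(0)=0$, $f'(0)\ne0$), and $zg'(z)/g(z)\prec\mathcal{T}_{\alpha,\beta,\gamma}(z)$. Because $zf'(z)/f(z)=z\cdot(f'/f)=z/g(z)$, the function $p(z):=zf'(z)/f(z)=z/g(z)$ is analytic and nonvanishing in $\mathbb{D}$ with $p(0)=1$, and logarithmic differentiation of $p=z/g$ gives $zp'(z)/p(z)=1-zg'(z)/g(z)$. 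Since $w\mapsto1-w$ is a univalent self-map of $\mathbb{C}$, subordination passes through, so that
\[
\frac{zp'(z)}{p(z)}\;\prec\;1-\mathcal{T}_{\alpha,\beta,\gamma}(z)=-\mathfrak{L}_{\alpha,\beta,\gamma}(z)=:h(z),\qquad h(0)=0.
\]

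Next I would pin down the dominant. Taking $p=\mathcal{T}_{\alpha,\beta,\gamma}$ in the structural formula \eqref{rep_ST_snail} (equivalently, reading off \eqref{eq_psi_alpha} and \eqref{eq_psi_2alpha_1}) shows $\Psi_{\alpha,\beta,\gamma}(z)=z\exp\!\big(\int_0^z\mathfrak{L}_{\alpha,\beta,\gamma}(t)/t\,dt\big)$, hence
\[
q(z):=\frac{z}{\Psi_{\alpha,\beta,\gamma}(z)}=\exp\!\Big(-\!\int_0^z\frac{\mathfrak{L}_{\alpha,\beta,\gamma}(t)}{t}\,dt\Big)=\Big(\frac{1-\beta z}{1-\alpha z}\Big)^{\!\frac{2(1-\gamma)}{\beta-\alpha}}
\]
(and $q(z)=\exp\!\big(-2(1-\gamma)z/(1-\alpha z)\big)$ when $\alpha=\beta$). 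A one-line computation then gives $q(0)=1$ and $zq'(z)/q(z)=-\mathfrak{L}_{\alpha,\beta,\gamma}(z)=h(z)$; thus $q$ is exactly the solution of $zq'(z)/q(z)=h(z)$ with $q(0)=1$.

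Finally I would invoke the standard first-order differential subordination lemma of Miller and Mocanu (with $\theta\equiv0$ and $\varphi(w)=1/w$): if $h$ is starlike univalent in $\mathbb{D}$ with $h(0)=0$, then $q(z)=\exp\!\int_0^z h(t)/t\,dt$ is the best dominant of the differential subordination $zp'(z)/p(z)\prec h(z)$; in particular, the function $p$ above satisfies $p\prec q$, i.e. $zf'(z)/f(z)\prec z/\Psi_{\alpha,\beta,\gamma}(z)$, which is the claim. The main obstacle is the admissibility hypothesis, namely that $h=-\mathfrak{L}_{\alpha,\beta,\gamma}$ be starlike univalent in $\mathbb{D}$ --- but this is exactly Lemma~\ref{univalent}, since $\mathfrak{L}_{\alpha,\beta,\gamma}$ is starlike in $\mathbb{D}$ and negation preserves starlikeness. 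A secondary point is the univalence of $q=z/\Psi_{\alpha,\beta,\gamma}$ needed for the ``best dominant'' formulation: starlikeness of $\mathfrak{L}_{\alpha,\beta,\gamma}$ makes $\log q=-\int_0^z\mathfrak{L}_{\alpha,\beta,\gamma}(t)/t\,dt$ convex univalent by Alexander's theorem, after which the explicit power form of $q$ is checked to be univalent directly. As an alternative avoiding differential subordinations, one may write $zg'(z)/g(z)=\mathcal{T}_{\alpha,\beta,\gamma}(\omega(z))$ for a Schwarz function $\omega$, so that $p(z)=\exp\!\big(-\int_0^z\mathfrak{L}_{\alpha,\beta,\gamma}(\omega(t))/t\,dt\big)$, and appeal to the subordination-preserving property of the Alexander-type operator $\phi\mapsto\int_0^z\phi(t)/t\,dt$ on the Ma--Minda domain $\mathcal{T}_{\alpha,\beta,\gamma}(\mathbb{D})$.
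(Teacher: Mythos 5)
Your proposal is correct and follows essentially the same route as the paper: both reduce the hypothesis to the differential subordination $zp'(z)/p(z)\prec-\mathfrak{L}_{\alpha,\beta,\gamma}(z)$ for $p=zf'/f$, identify $z/\Psi_{\alpha,\beta,\gamma}$ as the solution of the associated logarithmic ODE, and feed the starlikeness of $\mathfrak{L}_{\alpha,\beta,\gamma}$ from Lemma~\ref{univalent} into a standard subordination theorem. The only cosmetic difference is that the paper applies Suffridge's convexity result to $F(z)=\log\bigl(\Psi_{\alpha,\beta,\gamma}(z)/z\bigr)$ (convex by Alexander's theorem) and then exponentiates, whereas you invoke the equivalent Miller--Mocanu lemma for $zp'/p\prec h$ with $h$ starlike --- your closing remark via Alexander's theorem is precisely the paper's argument.
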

\begin{proof}
Let $p(z)=zf'(z)/f(z)$. Then $P_f(z)=z/p(z)$ and $zP'_f/P_f=1-zp'/p$. Since $P_f\in \mathcal{ST}_{snail}(\alpha,\beta,\gamma)$, we have
\[
-\frac{zp'(z)}{p(z)}\prec  \mathcal{T}_{\alpha,\beta,\gamma}(z)-1= \mathfrak{L}_{\alpha,\beta,\gamma}(z)\quad (z \in \mathbb{D}).
\]
The function $F$ defined by
\[
F(z)=\int_{0}^{z}\frac{\mathfrak{L}_{\alpha,\beta,\gamma}(t) }{t}\,dt=\log\myp{\frac{ \Psi_{\alpha,\beta,\gamma}(z)}{z}}
\]
 where $ \Psi_{\alpha,\beta,\gamma}$ given by \eqref{eq_psi_alpha}, is analytic in $\mathbb{D}$, $F(0)=F'(0)-1=0$ and
 \[
 1+\frac{zF''(z)}{F'(z)}=\frac{z \mathfrak{L}'_{\alpha,\beta,\gamma}(t) }{\mathfrak{L}_{\alpha,\beta,\gamma}(t)} \quad (z \in \mathbb{D}).
\]
Taking into account Lemma \ref{univalent}, we deduce that the function $F$ is convex in $\mathbb{D}$. Applying \cite{Suff}, we conclude that
\[
-\log p(z)\prec \log\myp{\frac{\Psi_{\alpha,\beta,\gamma}(z)}{z}}\quad \textit{or}\quad \log p(z)\prec \log\myp{\frac{z}{\Psi_{\alpha,\beta,\gamma}(z)}},
\]
and by \eqref{eq_psi_alpha}, the required result follows.
\end{proof}
Since for  $z\in \mathbb{D}$ and $\alpha\neq \beta$
\[
\Re\left\{\frac{z}{\Psi_{\alpha,\beta,\gamma}(z)}\right\}=\left|\frac{1-\alpha z}{1-\beta z}\right|^{-\frac{2(1-\gamma)}{\beta-\alpha}}\cos\left(\frac{2(1-\gamma)}{\beta-\alpha}\,\arg \frac{1-\alpha z}{1-\beta z} \right)
\]
and $\arg \frac{1-\alpha z}{1-\beta z}\in (-\pi/2,\pi/2)$ the above and Theorem \ref{th_ST_con} leads to the following conclusion.
  \begin{cor}\label{cor_ST_1}
	Let $f\in \mathcal{A}$ be a locally univalent function. If $P_f=f/f'\in  \mathcal{ST}_{snail}(\alpha,\beta,\gamma)$ with $\alpha\neq\beta$ and $\gamma\ge1-\frac{\beta-\alpha}{2}$, then $f\in \mathcal{ST}$.
\end{cor}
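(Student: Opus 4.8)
The plan is to read off the conclusion from Theorem~\ref{th_ST_con} together with the explicit real-part identity for $z/\Psi_{\alpha,\beta,\gamma}(z)$ displayed just above this corollary. Since $P_f=f/f'\in\mathcal{ST}_{snail}(\alpha,\beta,\gamma)$, Theorem~\ref{th_ST_con} gives at once the subordination $zf'(z)/f(z)\prec z/\Psi_{\alpha,\beta,\gamma}(z)$ in $\mathbb{D}$. Because a subordination forces the inclusion of ranges, it suffices to prove that $z/\Psi_{\alpha,\beta,\gamma}$ carries $\mathbb{D}$ into the right half-plane, i.e.\ that $\Re\set{z/\Psi_{\alpha,\beta,\gamma}(z)}>0$ for every $z\in\mathbb{D}$. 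Once this is known, $\Re\set{zf'(z)/f(z)}>0$ in $\mathbb{D}$, and since $f\in\mathcal{A}$ is locally univalent the quotient $zf'(z)/f(z)$ is analytic in $\mathbb{D}$ with value $1$ at the origin, so the classical analytic characterization of starlikeness yields $f\in\mathcal{ST}$.

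To obtain the positivity I would invoke the identity $\Re\set{z/\Psi_{\alpha,\beta,\gamma}(z)}=\abs{\frac{1-\alpha z}{1-\beta z}}^{-\frac{2(1-\gamma)}{\beta-\alpha}}\cos\myp{\frac{2(1-\gamma)}{\beta-\alpha}\arg\frac{1-\alpha z}{1-\beta z}}$ recorded just above. The modulus factor is strictly positive, so the sign of the real part is governed by the cosine. For $z\in\mathbb{D}$ one has $\arg\frac{1-\alpha z}{1-\beta z}\in(-\pi/2,\pi/2)$ and $\beta-\alpha>0$, hence the argument of the cosine always lies in the open interval $\myp{-\frac{(1-\gamma)\pi}{\beta-\alpha},\frac{(1-\gamma)\pi}{\beta-\alpha}}$. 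The hypothesis $\gamma\ge 1-\frac{\beta-\alpha}{2}$ is exactly equivalent to $\frac{2(1-\gamma)}{\beta-\alpha}\le 1$, so this interval is contained in $(-\pi/2,\pi/2)$, the cosine stays strictly positive throughout $\mathbb{D}$, and the required inequality follows.

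Every step is routine once Theorem~\ref{th_ST_con} and the displayed real-part identity are available; the only point demanding a little care is the borderline case $\gamma=1-\frac{\beta-\alpha}{2}$, where $\frac{2(1-\gamma)}{\beta-\alpha}=1$. Even then, because $z$ runs over the \emph{open} unit disk the quantity $\arg\frac{1-\alpha z}{1-\beta z}$ remains strictly inside $(-\pi/2,\pi/2)$, so the cosine never vanishes and the strict inequality $\Re\set{zf'(z)/f(z)}>0$ persists; thus no degeneracy arises and the corollary is established.
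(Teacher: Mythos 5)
Your proposal is correct and follows exactly the paper's (very terse) argument: Theorem~\ref{th_ST_con} gives the subordination $zf'/f\prec z/\Psi_{\alpha,\beta,\gamma}$, and the displayed real-part identity together with $\arg\frac{1-\alpha z}{1-\beta z}\in(-\pi/2,\pi/2)$ and the hypothesis $\frac{2(1-\gamma)}{\beta-\alpha}\le 1$ forces $\Re\{z/\Psi_{\alpha,\beta,\gamma}(z)\}>0$, hence $\Re\{zf'/f\}>0$. Your explicit handling of the borderline case $\gamma=1-\frac{\beta-\alpha}{2}$ is a welcome extra detail the paper leaves implicit.
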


Now we get a representation of functions in class $ \mathcal{ST}_{snail}(\alpha,\beta,\gamma)$ with the help of the class $\mathcal{ST}[\beta]$.
\begin{lem}\label{lem_rep_ST_snail}
Let  $f\in \mathcal{ST}_{snail}(\alpha,\beta,\gamma)$ with $\alpha, \beta\ne 0$ and $\alpha\neq\beta$. Then there exists  $h\in \mathcal{ST}[\beta]$, and $g \in \mathcal{ST}[\alpha]$ such that
\[
 f(z)=z \myp{\frac{h(z)}{g(z)}}^{\frac{2(1-\gamma)}{\beta-\alpha}} \quad (z \in \mathbb{D}).
\]
  \end{lem}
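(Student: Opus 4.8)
The plan is to unwind the subordination in Definition~\ref{def1} and then reconstruct $h$ and $g$ directly from the Schwarz function it produces. Since $f\in\mathcal{ST}_{snail}(\alpha,\beta,\gamma)$, by \eqref{def_main} together with \eqref{eq_def_funT} there is an analytic $\omega\colon\mathbb{D}\to\mathbb{D}$ with $\omega(0)=0$ such that
\[
\frac{zf'(z)}{f(z)}-1=\mathfrak{L}_{\alpha,\beta,\gamma}(\omega(z))=\frac{2(1-\gamma)\,\omega(z)}{\bigl(1-\alpha\omega(z)\bigr)\bigl(1-\beta\omega(z)\bigr)}\qquad(z\in\mathbb{D}).
\]
The algebraic core is the partial fraction identity, valid because $\alpha\neq\beta$,
\[
\frac{w}{(1-\alpha w)(1-\beta w)}=\frac{1}{\beta-\alpha}\left(\frac{1}{1-\beta w}-\frac{1}{1-\alpha w}\right),
\]
which rewrites the displayed right-hand side as $\dfrac{2(1-\gamma)}{\beta-\alpha}\left(\dfrac{1}{1-\beta\omega(z)}-\dfrac{1}{1-\alpha\omega(z)}\right)$.

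Next I would \emph{define}
\[
h(z)=z\exp\left(\int_0^z\frac{1}{t}\left(\frac{1}{1-\beta\omega(t)}-1\right)dt\right),\qquad g(z)=z\exp\left(\int_0^z\frac{1}{t}\left(\frac{1}{1-\alpha\omega(t)}-1\right)dt\right).
\]
Because $\omega(0)=0$, each integrand is analytic on $\mathbb{D}$ (the factor $\omega(t)/t$ is analytic), so $h$ and $g$ are well defined, analytic, and normalized with $h(0)=g(0)=0$, $h'(0)=g'(0)=1$. Logarithmic differentiation gives $zh'(z)/h(z)=1/(1-\beta\omega(z))$ and $zg'(z)/g(z)=1/(1-\alpha\omega(z))$; since $\omega$ is a Schwarz function these are exactly the statements $zh'/h\prec 1/(1-\beta z)$ and $zg'/g\prec 1/(1-\alpha z)$, i.e.\ $h\in\mathcal{ST}[\beta]$ and $g\in\mathcal{ST}[\alpha]$ (valid since $\alpha,\beta\neq0$, which is what makes these the classes named in the statement).

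Finally, put $c=\dfrac{2(1-\gamma)}{\beta-\alpha}$ and $\widetilde f(z)=z\bigl(h(z)/g(z)\bigr)^{c}$, taking the branch equal to $1$ at the origin; this is legitimate since $h(z)/g(z)=\exp\bigl(\Phi(z)\bigr)$ with $\Phi$ analytic and $\Phi(0)=0$, by the formulas above. Logarithmic differentiation and the partial fraction identity yield
\[
\frac{z\widetilde f'(z)}{\widetilde f(z)}=1+c\left(\frac{zh'(z)}{h(z)}-\frac{zg'(z)}{g(z)}\right)=1+\frac{2(1-\gamma)}{\beta-\alpha}\left(\frac{1}{1-\beta\omega(z)}-\frac{1}{1-\alpha\omega(z)}\right)=\frac{zf'(z)}{f(z)}.
\]
Thus $f$ and $\widetilde f$ have the same logarithmic derivative on $\mathbb{D}$ and both satisfy $f(z)/z\to1$, $\widetilde f(z)/z\to1$ as $z\to0$, so $f\equiv\widetilde f$, which is the asserted representation. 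The only delicate points are the analyticity of the integrands at $z=0$ (handled by $\omega(0)=0$) and the consistent choice of branch for the power; the substance of the argument is the partial fraction split that identifies $\mathfrak{L}_{\alpha,\beta,\gamma}\circ\omega$ with a difference of two $\mathcal{ST}[\cdot]$-generators.
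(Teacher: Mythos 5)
Your proof is correct and follows essentially the same route as the paper's: both use the Schwarz function $\omega$ from the subordination, the partial fraction split of $\mathfrak{L}_{\alpha,\beta,\gamma}(\omega)$ into $\frac{2(1-\gamma)}{\beta-\alpha}\bigl(\frac{1}{1-\beta\omega}-\frac{1}{1-\alpha\omega}\bigr)$, and the same exponential-integral definitions of $h$ and $g$. The only cosmetic difference is that the paper substitutes directly into the structural formula \eqref{rep_ST_snail}, whereas you verify that $f$ and $z(h/g)^{q}$ have equal logarithmic derivatives and normalization — an equivalent step.
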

  \begin{proof}
 Let $f\in \mathcal{ST}_{snail}(\alpha,\beta,\gamma)$. Then, by \eqref{rep_ST_snail},  there exists a self-map  $\omega$, which is analytic in  $\mathbb{D}$, $\omega(0)=0,\ |\omega(z)|<1$, and  such that
\begin{align*}
f(z)&=z\exp\myp{ \int_{0}^{z} \frac{\mathcal{T}_{\alpha,\beta,\gamma}(\omega(t))-1}{t}\, dt}=z\exp\ \int_{0}^{z}q\left[ \frac{\beta \omega(t)}{t(1-\beta \omega(t))}-\frac{\alpha\omega(t)}{t(1-\alpha \omega(t))}\right]\, dt\\
&=z \left(\frac{z\exp\displaystyle\int_{0}^{z} \frac{\beta\omega(t)}{t[1-\beta \omega(t)]}\, dt}{z\exp\displaystyle\int_{0}^{z} \frac{\alpha\omega(t)}{t[1-\alpha \omega(t)]}\, dt}\right)^{q}=z \left(\frac{z\exp\displaystyle\int_{0}^{z} \frac{\frac{1}{1-\beta \omega(t)}-1}{t}\, dt}{z\exp\displaystyle\int_{0}^{z} \frac{\frac{1}{1-\alpha \omega(t)}-1}{t}\, dt}\right)^{q}=z \myp{\frac{h(z)}{g(z)}}^{q},
\end{align*}
where $q=\frac{2(1-\gamma)}{\beta-\alpha}$. The assertion now follows.
  \end{proof}
From the relation $h\in \mathcal{CV}_{snail}(\alpha,\beta,\gamma)$ if and only if $1+zh''(z)/h'(z)\prec
\mathcal{T}_{\alpha,\beta,\gamma}(z)$ we obtain the structural formula for functions in $\mathcal{CV}_{snail}(\alpha,\beta,\gamma)$. A function $h$ is in the class
$\mathcal{CV}_{snail}(\alpha,\beta,\gamma)$ if and only if there exists an analytic function $p$ with $p\prec \mathcal{T}_{\alpha,\beta,\gamma}$, such that
\begin{equation}\label{rep_CV_snail}
	h(z)=\int_{0}^{z}\exp\myp{ \int_{0}^{w} \frac{p(t)-1}{t}\, dt}dw.
\end{equation}
The above representation supply many examples of functions in class $\mathcal{CV}_{snail}(\alpha,\beta,\gamma)$. Let
$p(z)=\mathcal{T}_{\alpha,\beta,\gamma}(z) \in \mathcal{CV}_{snail}(\alpha,\beta,\gamma)$, then for some $n\ge 1$ and $\alpha\neq\beta$,  the
 functions
\begin{multline}\label{eq_K_alpha_n}
K_{\alpha,\beta,\gamma,n}(z)=\int_{0}^{z}\exp\myp{ \int_{0}^{w}
\frac{2(1-\gamma)t^{n-1}}{\myp{1-\alpha t^n}\myp{1-\beta t^{n}}}\,
   dt}dw =\int_{0}^{z}\myp{\frac{1-\alpha t^n}{1- \beta t^n}}^{\frac{2(1-\gamma)}{n\myp{\alpha-\beta}}}dt,
\end{multline}
are extremal functions for several problems in the class $\mathcal{CV}_{snail}(\alpha,\beta,\gamma)$.
 For  $n=1$ we have
\begin{equation}\label{eq_K_alpha}
K_{\alpha,\beta,\gamma}(z):=K_{\alpha,\beta,\gamma,1}(z)=\int_{0}^{z}\myp{\frac{1-\alpha t}{1-\beta t}}^{\frac{2(1-\gamma)}{\alpha-\beta}}dt.
\end{equation}
and for $\alpha=\beta$
\begin{equation}\label{eq_K_alpha_1}
K_{\alpha,\alpha,\gamma,n}(z)=\int_{0}^{z}\exp\left(\frac{2(1-\gamma)t^n}{n(1- \alpha t^n)}\right)dt\quad\textit{and}\quad K_{\alpha,\alpha,\gamma}(z):=K_{\alpha,\alpha,\gamma,1}(z).
\end{equation}

Now we get a representation of functions in class $\mathcal{CV}_{snail}(\alpha,\beta,\gamma)$ with the help of class $\mathcal{ST}[\beta]$.
From Lemma \ref{lem_rep_ST_snail}, we conclude the following Corollary.
\begin{cor}\label{cor_rep_CV_snail}
Let  $f\in \mathcal{CV}_{snail}(\alpha,\beta,\gamma)$ with $\alpha, \beta\ne 0$ and $\alpha\neq \beta$. Then there exists  $h\in \mathcal{ST}[\beta]$ and $g\in \mathcal{ST}[\alpha]$ such that
\[
 f'(z)=\myp{\frac{h(z)}{g(z)}}^{\frac{2(1-\gamma)}{\beta-\alpha}}\quad (z \in \mathbb{D}).
\]
  \end{cor}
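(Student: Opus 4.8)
The plan is to deduce Corollary~\ref{cor_rep_CV_snail} from Lemma~\ref{lem_rep_ST_snail} by way of the Alexander-type correspondence between $\mathcal{CV}_{snail}$ and $\mathcal{ST}_{snail}$. First I would record the elementary identity: setting $F(z):=zf'(z)$ one has
\[
\frac{zF'(z)}{F(z)}=\frac{z\myp{f'(z)+zf''(z)}}{zf'(z)}=1+\frac{zf''(z)}{f'(z)}\qquad(z\in\mathbb{D}),
\]
so that $1+zf''(z)/f'(z)\prec\mathcal{T}_{\alpha,\beta,\gamma}(z)$ holds if and only if $zF'(z)/F(z)\prec\mathcal{T}_{\alpha,\beta,\gamma}(z)$. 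In other words, $f\in\mathcal{CV}_{snail}(\alpha,\beta,\gamma)$ if and only if $F=zf'\in\mathcal{ST}_{snail}(\alpha,\beta,\gamma)$; the membership $F\in\mathcal{S}$ together with the normalization $F(0)=F'(0)-1=0$ follows at once from the structural formula~\eqref{rep_CV_snail}, which gives $f'(z)=\exp\myp{\int_0^z\frac{p(t)-1}{t}\,dt}$ for $p:=1+zf''/f'\prec\mathcal{T}_{\alpha,\beta,\gamma}$, whence $F(z)=zf'(z)=z\exp\myp{\int_0^z\frac{p(t)-1}{t}\,dt}$ is precisely of the form~\eqref{rep_ST_snail}.

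Next I would apply Lemma~\ref{lem_rep_ST_snail} to $F=zf'$. Since $\alpha,\beta\neq 0$ and $\alpha\neq\beta$, that lemma supplies $h\in\mathcal{ST}[\beta]$ and $g\in\mathcal{ST}[\alpha]$ with
\[
zf'(z)=F(z)=z\myp{\frac{h(z)}{g(z)}}^{\frac{2(1-\gamma)}{\beta-\alpha}}\qquad(z\in\mathbb{D}),
\]
the branch of the power being the one normalized to equal $1$ at the origin, exactly as in the proof of Lemma~\ref{lem_rep_ST_snail}. Cancelling the factor $z$ on both sides yields the desired representation $f'(z)=\myp{\frac{h(z)}{g(z)}}^{\frac{2(1-\gamma)}{\beta-\alpha}}$.

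I do not anticipate a genuine obstacle; the only spot requiring a word of care is the implication $f\in\mathcal{CV}_{snail}\Rightarrow zf'\in\mathcal{ST}_{snail}$, because $\mathcal{ST}_{snail}(\alpha,\beta,\gamma)$ was defined as a subfamily of $\mathcal{S}$ rather than merely by a subordination. As indicated above, this is settled by comparing~\eqref{rep_CV_snail} with~\eqref{rep_ST_snail}: the same analytic function $p\prec\mathcal{T}_{\alpha,\beta,\gamma}$ that represents $f$ in $\mathcal{CV}_{snail}$ represents $zf'$ in $\mathcal{ST}_{snail}$. Should one prefer to avoid invoking Lemma~\ref{lem_rep_ST_snail} as a black box, one can instead repeat its brief computation directly from~\eqref{rep_CV_snail}: using the partial-fraction identity $\mathfrak{L}_{\alpha,\beta,\gamma}(w)=q\myb{\frac{\beta w}{1-\beta w}-\frac{\alpha w}{1-\alpha w}}$ with $q=\frac{2(1-\gamma)}{\beta-\alpha}$, split $f'(z)=\exp\myp{\int_0^z\frac{p(t)-1}{t}\,dt}$ (with $p=1+zf''/f'$) into a quotient of two exponentials, and recognize them as $(h(z)/z)^{q}$ and $(g(z)/z)^{q}$ with $h\in\mathcal{ST}[\beta]$ and $g\in\mathcal{ST}[\alpha]$, exactly as in the proof of Lemma~\ref{lem_rep_ST_snail}.
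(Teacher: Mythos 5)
Your proposal is correct and follows exactly the route the paper intends: the paper simply states that the corollary follows from Lemma \ref{lem_rep_ST_snail}, the implicit mechanism being the Alexander correspondence $f\in\mathcal{CV}_{snail}(\alpha,\beta,\gamma)\Leftrightarrow zf'\in\mathcal{ST}_{snail}(\alpha,\beta,\gamma)$ (which the paper records explicitly just before Proposition \ref{Ex}), followed by applying the lemma to $F=zf'$ and cancelling the factor $z$. Your additional care in verifying that $zf'$ genuinely lies in $\mathcal{ST}_{snail}$ by comparing \eqref{rep_CV_snail} with \eqref{rep_ST_snail} is a sound touch that the paper leaves tacit.
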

   From \eqref{eq_Between1}, we conclude that  $f\in \mathcal{ST}_{snail}(\alpha,\beta,\gamma)$  if and only if
    \[
  \left|\dfrac{zf'(z)}{f(z)}-1\right|<L =
  \left\{   \begin{array}{ll}
   \frac{2(1-\gamma)}{(1+\alpha)(1+\beta)} \quad&\textit{for}\quad\alpha\beta>0 \ \textit{with}\ \alpha+\beta<0\ \textit{or} \ \beta=0,\\
 \frac{2(1-\gamma)}{(1-\alpha)(1-\beta)} \quad&\textit{for}\quad\alpha\beta>0 \ \textit{with}\ \alpha+\beta>0\ \textit{or} \ \alpha=0,\\
  \frac{4(1-\gamma)\sqrt{|\alpha\beta|}}{(\beta-\alpha)(1-\alpha\beta)}\quad&\textit{for}\quad  \alpha\beta<0 \ \textit{with}\   \alpha+\beta\ne 0,\\
  \frac{2(1-\gamma)}{1+\alpha^2}\quad&\textit{for}\quad   \alpha+\beta=0
  \end{array}\right.
  \]
  and the fact that $f\in \mathcal{CV}_{snail}(\alpha,\beta,\gamma)$ if and only if $zf'(z)\in \mathcal{ST}_{snail}(\alpha,\beta,\gamma)$,  we get the following conclusions.
\begin{prop}\label{Ex} Let $-1 < \alpha\le\beta< 1$. The classes $\mathcal{ST}_{snail}(\alpha,\beta,\gamma)$ and $\mathcal{CV}_{snail}(\alpha,\beta,\gamma)$ are nonempty. The following functions are the examples of their members.\rm
\begin{enumerate}
\item
Let $a_n\in \mathbb{C}$ with $n=2,3 ,\ldots$. Then $f(z)=z+a_nz^n \in \mathcal{ST}_{snail}(\alpha,\beta,\gamma)$ $\Longleftrightarrow
\left|a_n\right|\le \frac{L}{n-1+L}.$
\item Let $a_n\in \mathbb{C}$ with $n=2, 3, \ldots$. Then  $f(z)=z+a_nz^n\in\mathcal{CV}_{snail}(\alpha,\beta,\gamma)$ $\Longleftrightarrow
n\left|a_n\right|\leq \frac{L}{n-1+L}.$
\item
Let $A\in \mathbb{C}$. Then $z/\myp{1-Az}^2\in \mathcal{ST}_{snail}(\alpha,\beta,\gamma)
\Longleftrightarrow  |A|\le \frac{2}{2+L}.$
 \item
Let $A\in \mathbb{C}$. Then $z/(1-Az) \in\mathcal{CV}_{snail}(\alpha,\beta,\gamma) \Longleftrightarrow
|A|\leq \frac{2}{2+L}.$
 \item
Let $A\in \mathbb{C}$. Then $z\exp(Az)\in \mathcal{ST}_{snail}(\alpha,\beta,\gamma) \Longleftrightarrow  |A|\le L.$
\item
Let $A\in \mathbb{C}$. Then $\frac{\exp(Az)-1}{A}\in \mathcal{CV}_{snail}(\alpha,\beta,\gamma)\Longleftrightarrow  0<|A|\le L,$\\
where $L$ is given in the Corollary \ref{cor_rep_CV_snail}.
\end{enumerate}\end{prop}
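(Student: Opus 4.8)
The plan is to reduce all six items, together with the nonemptiness assertion, to two facts recorded immediately before the statement: the characterization $f\in\mathcal{ST}_{snail}(\alpha,\beta,\gamma)\Longleftrightarrow|zf'(z)/f(z)-1|<L$ for all $z\in\mathbb{D}$ (which itself rests on the univalence of $\mathcal{T}_{\alpha,\beta,\gamma}$, on $\mathcal{T}_{\alpha,\beta,\gamma}(0)=1$, and on the disk inclusions \eqref{eq_Between1}--\eqref{eq_Between2} for $\mathcal{T}_{\alpha,\beta,\gamma}(\mathbb{D})$), and the equivalence $f\in\mathcal{CV}_{snail}(\alpha,\beta,\gamma)\Longleftrightarrow zf'\in\mathcal{ST}_{snail}(\alpha,\beta,\gamma)$. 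Granting these, it is enough to establish the three starlike items (1), (3) and (5). Item (2) then follows by applying (1) to $zf'(z)=z+na_nz^n$; item (4) by applying (3) to $zf'(z)=z/(1-Az)^2$; item (6) by applying (5) to $zf'(z)=z\exp(Az)$ (the extra hypothesis $A\neq0$ in (6) merely reflects that $(e^{Az}-1)/A$ is written in that form). Nonemptiness is then immediate — e.g. $f(z)=z$ has $zf'(z)/f(z)\equiv1=\mathcal{T}_{\alpha,\beta,\gamma}(0)$, hence lies in both classes.

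For the basic items I would first compute $p(z):=zf'(z)/f(z)$ in closed form:
\[
p(z)=\frac{1+na_nz^{n-1}}{1+a_nz^{n-1}}\ \text{ in (1)},\qquad p(z)=\frac{1+Az}{1-Az}\ \text{ in (3)},\qquad p(z)=1+Az\ \text{ in (5)},
\]
so $p(0)=1$ throughout. Each such $p$ maps $\mathbb{D}$ onto a disk: $D(1,|A|)$ in (5); the Möbius image of $\mathbb{D}$ under $z\mapsto(1+Az)/(1-Az)$ in (3); and in (1), putting $w=a_nz^{n-1}$ (which sweeps the open disk $\{|w|<|a_n|\}$; one may assume $|a_n|<1$, since otherwise $f$ is not univalent and the claimed bound fails as well), $p(z)-1=(n-1)w/(1+w)$, a Möbius image of a disk about the origin. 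By the characterization above, $f$ lies in $\mathcal{ST}_{snail}(\alpha,\beta,\gamma)$ iff $M:=\sup_{z\in\mathbb{D}}|p(z)-1|\le L$; note that this supremum is attained only on $\partial\mathbb{D}$, so ``$M\le L$'' is equivalent to ``$|p(z)-1|<L$ on all of $\mathbb{D}$''.

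Evaluating $M$ is then a short maximum-modulus computation for a Möbius function: $M=|A|$ in (5), $M=\dfrac{2|A|}{1-|A|}$ in (3), and $M=\dfrac{(n-1)|a_n|}{1-|a_n|}$ in (1). The conditions $M\le L$ are therefore $|A|\le L$ in (5), $2|A|\le L(1-|A|)$ in (3), and $(n-1)|a_n|\le L(1-|a_n|)$ in (1); solving each for the free parameter gives an explicit threshold, and transporting through $g=zf'$ produces the corresponding thresholds in (2), (4), (6). Both implications of every ``$\Longleftrightarrow$'' appear simultaneously: when the bound holds, $M\le L$ and $p\prec\mathcal{T}_{\alpha,\beta,\gamma}$; when it fails, $M>L$, so some value $p(z_0)$ with $z_0\in\mathbb{D}$ lies outside $D(1,L)$, and one verifies — using that $p(\mathbb{D})$ is a disk centred on the real axis and the description of $\mathcal{T}_{\alpha,\beta,\gamma}(\mathbb{D})\cap\mathbb{R}$ furnished by Theorem~\ref{main1}/Corollary~\ref{Cor1_Th-Re} — that $p(z_0)\notin\mathcal{T}_{\alpha,\beta,\gamma}(\mathbb{D})$.

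The step I expect to be the main obstacle is exactly that last verification in items (1) and (3), where $p(\mathbb{D})$ is \emph{not} centred at $1$: one must show that the inclusion $p(\mathbb{D})\subseteq\mathcal{T}_{\alpha,\beta,\gamma}(\mathbb{D})$ is controlled precisely by the extreme point of $p(\mathbb{D})$ on the real axis (lying to the left of $1$ in (1), to the right of $1$ in (3)), which forces one to invoke the exact real section of $\mathcal{T}_{\alpha,\beta,\gamma}(\mathbb{D})$ from Theorem~\ref{main1} and the inner/outer disk estimates \eqref{eq_Between1}--\eqref{eq_Between2}, together with the case split over the branches of $L$ (equivalently, over the signs of $\alpha\beta$ and of $\alpha+\beta$). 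In item (5) this difficulty does not occur, since there $p(\mathbb{D})=D(1,|A|)$ is centred at $1$ and $L$ is exactly its inradius; the remaining computations are then purely elementary algebra.
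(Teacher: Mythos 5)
Your overall route is the same as the paper's: the paper offers no more proof of this proposition than the sentence preceding it, namely the reduction to the condition $\left|zf'(z)/f(z)-1\right|<L$ together with the equivalence $f\in\mathcal{CV}_{snail}(\alpha,\beta,\gamma)\Leftrightarrow zf'\in\mathcal{ST}_{snail}(\alpha,\beta,\gamma)$, and your computations of $M=\sup_{z\in\mathbb{D}}|p(z)-1|$ for the three basic examples are correct. The genuine gap is that this reduction only yields the ``$\Longleftarrow$'' half of each item: \eqref{eq_Between1} says the disk $|w-1|<L$ is \emph{contained in} $\mathcal{T}_{\alpha,\beta,\gamma}(\mathbb{D})$, so $M\le L$ is sufficient for membership, but the converse would require $\mathcal{T}_{\alpha,\beta,\gamma}(\mathbb{D})\subset\set{|w-1|<L}$, and by \eqref{eq_Between2} the circumscribed radius exceeds $L$ whenever $\alpha\beta<0$ or $\alpha+\beta=0$. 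You correctly single this out as the main obstacle, but the repair you sketch does not close it: in items (1) and (3) the disk $p(\mathbb{D})$ is eccentric, and whether it fits inside the snail is not decided by its real section. Indeed, for $\alpha\beta<0$ the nearest boundary point of $\mathcal{T}_{\alpha,\beta,\gamma}(\mathbb{D})$ to $1$ comes from the non-real roots of \eqref{eq1} (this is exactly how $L$ arises in Lemma \ref{univalent}), while in item (3) the farthest point of $p(\mathbb{D})$ from $1$ lies to the \emph{right} of $1$, where the snail extends to the outer radius of \eqref{eq_Between2} rather than to $L$; so $M>L$ does not by itself place any value of $p$ outside the snail. Only item (5), where $p(\mathbb{D})=D(1,|A|)$ is centred at $1$ and $L$ is precisely the inradius about $1$, comes out of this argument as a genuine equivalence.

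There is also a concrete numerical mismatch you should have caught: your (correct) condition in item (3) is $2|A|\le L(1-|A|)$, which solves to $|A|\le L/(2+L)$, whereas the proposition asserts $|A|\le 2/(2+L)$, and the same discrepancy propagates to item (4) via the Alexander relation. Either the printed bound is a misprint for $L/(2+L)$ or your chain of reasoning does not deliver the statement as written; in any case the proposal, which stops at ``solving each for the free parameter gives an explicit threshold,'' does not actually arrive at the asserted thresholds for items (3) and (4).
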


The following corollary is the consequence of  Lemma \ref{univalent}, and Theorems in \cite{MM}.
\begin{cor}\label{corollary1} For $-1 < \alpha\le\beta< 1$, $|z|=r<1$, and $f\in \mathcal{ST}_{snail}(\alpha,\beta,\gamma)$, it holds
$$
-\Psi_{\alpha,\beta,\gamma}(-r) \leq \left|f(z) \right|\leq \Psi_{\alpha,\beta,\gamma}(r),$$
$$\Psi_{\alpha,\beta,\gamma}'(-r)\leq \left|f'(z) \right|\leq   \Psi'_{\alpha,\beta,\gamma}(r)\quad\textit{for}\quad\alpha\beta>0 \ \textit{with}\ \alpha+\beta>0\ \textit{or} \ \alpha=0,$$
$$\Psi_{\alpha,\beta,\gamma}'(r)\leq \left|f'(z) \right|\leq   \Psi'_{\alpha,\beta,\gamma}(-r)\quad\textit{for}\quad\alpha\beta>0 \ \textit{with}\ \alpha+\beta<0\ \textit{or} \ \beta=0,$$
$$\left| {\Arg}\set{{f(z)}/{z}}\right|\leq \max_{|z|=r}{\Arg}\set{{\Psi_{\alpha,\beta,\gamma}(z)}/{z}}.$$
Equalities in the above inequalities hold at a given point other than origin for the functions
\begin{equation}\label{eq_psi_alpha_lambda}
\psi_{\alpha,\gamma,\mu}(z)=\overline{\mu}\Psi_{\alpha,\beta,\gamma}(\mu z)\quad  \myp{\abs{\mu}=1}.\end{equation}
Moreover
\begin{equation}\frac{f(z)}{z}\prec \frac{\Psi_{\alpha,\beta,\gamma}(z)}{z} \quad (z \in \mathbb{D}).
\end{equation}
If $f\in \mathcal{ST}_{snail}(\alpha,\beta,\gamma)$,  then either $f$ is a rotation of $\Psi_{\alpha,\beta,\gamma}$ given by \eqref{eq_psi_alpha} and \eqref{eq_psi_2alpha_1}  or
$$\left\{w\in \mathbb{C} \colon |w|\leq  -\Psi_{\alpha,\beta,\gamma}(-1) \right\}\subset  f(\mathbb{D}),$$  where $-\Psi_{\alpha,\beta,\gamma}(-1)=\lim_{r\to 1^-}[-\Psi_{\alpha,\beta,\gamma}(-r)]$.
\end{cor}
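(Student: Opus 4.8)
The plan is to recognise $\mathcal{T}:=\mathcal{T}_{\alpha,\beta,\gamma}$ as a Ma--Minda function and then read off all five assertions from the growth, covering, distortion and subordination theorems of \cite{MM}, using Lemma \ref{univalent} to decide in which direction the extrema lie. First I would record the four Ma--Minda hypotheses for $\mathcal{T}$: $\mathcal{T}(0)=1$, $\mathcal{T}'(0)=2(1-\gamma)>0$, $\mathcal{T}$ is univalent, and $\mathcal{T}(\mathbb{D})=\mathfrak{D}_{\alpha,\beta,\gamma}$ is symmetric about the real axis and starlike with respect to $1$ --- all established earlier in this section. Then, since by Lemma \ref{univalent} the function $\mathfrak{L}_{\alpha,\beta,\gamma}=\mathcal{T}-1$ is starlike, the Ma--Minda extremal function attached to $\mathcal{T}$, namely $k(z)=z\exp\int_{0}^{z}\frac{\mathcal{T}(t)-1}{t}\,dt=z\exp\int_{0}^{z}\frac{\mathfrak{L}_{\alpha,\beta,\gamma}(t)}{t}\,dt$, satisfies $zk'(z)/k(z)=1+\mathfrak{L}_{\alpha,\beta,\gamma}(z)=\mathcal{T}(z)$ after logarithmic differentiation, and comparison with \eqref{eq_psi_alpha} and \eqref{eq_psi_2alpha_1} identifies $k=\Psi_{\alpha,\beta,\gamma}$. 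In particular $z\Psi_{\alpha,\beta,\gamma}'(z)/\Psi_{\alpha,\beta,\gamma}(z)=\mathcal{T}(z)$, so $\Psi_{\alpha,\beta,\gamma}\in\mathcal{ST}_{snail}(\alpha,\beta,\gamma)$ and its rotations $\psi_{\alpha,\gamma,\mu}$ are the candidate extremals.

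Since $f\in\mathcal{ST}_{snail}(\alpha,\beta,\gamma)$ means exactly $zf'/f\prec\mathcal{T}$, the Ma--Minda growth and covering theorem applied to $\phi=\mathcal{T}$ immediately gives, for $|z|=r<1$, the bounds $-\Psi_{\alpha,\beta,\gamma}(-r)\le|f(z)|\le\Psi_{\alpha,\beta,\gamma}(r)$, the subordination $f(z)/z\prec\Psi_{\alpha,\beta,\gamma}(z)/z$ (which also follows from the convexity of $\log(\Psi_{\alpha,\beta,\gamma}(z)/z)=\int_{0}^{z}\mathfrak{L}_{\alpha,\beta,\gamma}(t)/t\,dt$, established in the proof of Theorem \ref{th_ST_con}, together with the subordination principle for convex functions), the statement that equality at a non-zero point forces $f=\psi_{\alpha,\gamma,\mu}$ for some $|\mu|=1$, and the covering assertion that $f(\mathbb{D})$ contains $\set{w\in\mathbb{C}\colon|w|\le-\Psi_{\alpha,\beta,\gamma}(-1)}$ unless $f$ is one of those rotations. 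The argument estimate $\abs{\Arg\set{f(z)/z}}\le\max_{|z|=r}\Arg\set{\Psi_{\alpha,\beta,\gamma}(z)/z}$ then follows at once from $f(z)/z\prec\Psi_{\alpha,\beta,\gamma}(z)/z$, because the range of the subordinate function is contained in that of the subordinating one and the latter's argument, being harmonic, is maximised over $|z|\le r$ on the circle $|z|=r$.

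For the distortion bounds I would factor $|f'(z)|=\abs{f(z)/z}\cdot\abs{zf'(z)/f(z)}$ and estimate the two factors. The first lies between $-\Psi_{\alpha,\beta,\gamma}(-r)/r$ and $\Psi_{\alpha,\beta,\gamma}(r)/r$ by the growth bounds above; for the second, the Schwarz lemma gives $zf'(z)/f(z)=\mathcal{T}(\omega(z))$ with $|\omega(z)|\le|z|=r$, so $\abs{zf'(z)/f(z)}$ lies between $\min_{|\zeta|\le r}\abs{\mathcal{T}(\zeta)}$ and $\max_{|\zeta|\le r}\abs{\mathcal{T}(\zeta)}$. In the regime $\alpha\beta>0$ with $\alpha+\beta>0$, or $\alpha=0$, Lemma \ref{univalent} gives $\max_{|\zeta|=r}\abs{\mathfrak{L}_{\alpha,\beta,\gamma}(\zeta)}=\mathfrak{L}_{\alpha,\beta,\gamma}(r)$ and $\min_{|\zeta|=r}\abs{\mathfrak{L}_{\alpha,\beta,\gamma}(\zeta)}=-\mathfrak{L}_{\alpha,\beta,\gamma}(-r)$; since $\mathfrak{L}_{\alpha,\beta,\gamma}(r)>0$, the triangle inequality $\abs{\mathcal{T}}\le1+\abs{\mathfrak{L}_{\alpha,\beta,\gamma}}$ together with the maximum-modulus principle upgrades the first to $\max_{|\zeta|\le r}\abs{\mathcal{T}(\zeta)}=\mathcal{T}(r)$, and, $\mathcal{T}$ being zero-free (as $\Re\mathcal{T}>0$ by Theorem \ref{main1} and the standing condition $\gamma\ge\mathcal{T}_0(\alpha,\beta)$ of Definition \ref{def1}), the minimum-modulus principle reduces $\min_{|\zeta|\le r}\abs{\mathcal{T}(\zeta)}$ to $\min_{|\zeta|=r}\abs{\mathcal{T}(\zeta)}$, which a short computation shows to equal $\mathcal{T}(-r)$. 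Multiplying the factor estimates and invoking $\Psi_{\alpha,\beta,\gamma}'(\pm r)=\frac{\Psi_{\alpha,\beta,\gamma}(\pm r)}{\pm r}\,\mathcal{T}(\pm r)$ yields $\Psi_{\alpha,\beta,\gamma}'(-r)\le|f'(z)|\le\Psi_{\alpha,\beta,\gamma}'(r)$; the regime $\alpha\beta>0$ with $\alpha+\beta<0$, or $\beta=0$, is symmetric with $r$ and $-r$ interchanged, and sharpness is obtained by tracing equality back through the Schwarz lemma, forcing $\omega$ to be a rotation and $f=\psi_{\alpha,\gamma,\mu}$.

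I expect the only non-routine point to be this distortion step: transferring the circular extremal data from $\mathfrak{L}_{\alpha,\beta,\gamma}$ (where Lemma \ref{univalent} supplies it) to $\mathcal{T}=1+\mathfrak{L}_{\alpha,\beta,\gamma}$, and checking that the upper (respectively lower) bounds for $\abs{f(z)/z}$ and for $\abs{zf'(z)/f(z)}$ are compatible, so that their product is exactly the sharp constant $\Psi_{\alpha,\beta,\gamma}'(\pm r)$. This compatibility is precisely why the distortion claim is restricted to the two regimes in which the critical points of $\theta\mapsto\abs{\mathfrak{L}_{\alpha,\beta,\gamma}(re^{i\theta})}$ sit at $\theta=0,\pi$, so that the extrema are real and attained at $\pm r$; in the cases $\alpha\beta<0$ with $\alpha+\beta\ne0$ and $\alpha+\beta=0$ the critical points leave the real axis and no such clean statement holds. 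The identity $\min_{|\zeta|=r}\abs{\mathcal{T}(\zeta)}=\mathcal{T}(-r)$ in the admissible regimes is the one spot needing a genuine (but short) calculation, of the same flavour as the critical-point analysis carried out inside the proof of Lemma \ref{univalent}.
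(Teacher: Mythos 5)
Your proposal is correct and follows the same route as the paper: the paper offers no proof beyond the remark that the corollary ``is the consequence of Lemma \ref{univalent}, and Theorems in \cite{MM}'', i.e.\ one checks that $\mathcal{T}_{\alpha,\beta,\gamma}$ is a Ma--Minda function whose associated extremal function is $\Psi_{\alpha,\beta,\gamma}$ and then quotes the growth, distortion, covering and subordination theorems of \cite{MM}, exactly as you do. The extra care you take in transferring the modulus extrema of $\mathfrak{L}_{\alpha,\beta,\gamma}$ supplied by Lemma \ref{univalent} to $\abs{\mathcal{T}_{\alpha,\beta,\gamma}}=\abs{1+\mathfrak{L}_{\alpha,\beta,\gamma}}$ (the hypothesis of the Ma--Minda distortion theorem, where the reverse triangle inequality alone does not settle the minimum) is a detail the paper silently omits, so your sketch is, if anything, more complete than the original.
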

 \begin{cor}\label{corollary11} Let $-1 < \alpha\le\beta< 1$. If $f\in \mathcal{CV}_{snail}(\alpha,\beta,\gamma)$  and $|z|=r<1$, then
$$
-K_{\alpha,\beta,\gamma}(-r) \leq |f(z)|\leq K_{\alpha,\beta,\gamma}(r),$$
$$K_{\alpha,\beta,\gamma}'(-r) \leq |f'(z)|\leq   K'_{\alpha,\beta,\gamma}(r),$$
$$|{\Arg}\set{f'(z)}| \leq  \max_{|z|=r}{\Arg}\set{{K'_{\alpha,\beta,\gamma}(z)}}.$$
Equalities in the above inequalities hold  at a given point other than $0$ for functions
$\overline{\mu}K_{\alpha,\beta,\gamma}(\mu z)$ with $\myp{\abs{\mu}=1}.$
Moreover
$${f'(z)}\prec {K'_{\alpha,\beta,\gamma}(z)} \quad  (z \in \mathbb{D}).$$
If $f\in \mathcal{CV}_{snail}(\alpha,\beta,\gamma)$,  then either $f$ is a rotation of $K_{\alpha,\beta,\gamma}$ given by \eqref{eq_K_alpha} and \eqref{eq_K_alpha_1} or
$$\{w\in \mathbb{C} \colon |w|\leq  -K_{\alpha,\beta,\gamma}(-1)\}\subset f(\mathbb{D}),$$ where $-K_{\alpha,\beta,\gamma}(-1)=\lim_{r\to 1^-}[-K_{\alpha,\beta,\gamma}(-r)]$.
\end{cor}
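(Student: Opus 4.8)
The idea is to deduce everything from the starlike analogue, Corollary~\ref{corollary1}, via the Alexander-type equivalence recorded before Proposition~\ref{Ex}: $f\in\mathcal{CV}_{snail}(\alpha,\beta,\gamma)$ if and only if $g:=zf'\in\mathcal{ST}_{snail}(\alpha,\beta,\gamma)$, which is just $1+zf''(z)/f'(z)=zg'(z)/g(z)$. First I would check that under this correspondence the starlike extremal $\Psi_{\alpha,\beta,\gamma}$ is carried to the convex extremal $K_{\alpha,\beta,\gamma}$. Indeed, a partial-fraction computation gives $\int_0^z\frac{2(1-\gamma)\,dt}{(1-\alpha t)(1-\beta t)}=\frac{2(1-\gamma)}{\alpha-\beta}\log\frac{1-\beta z}{1-\alpha z}$ (with the obvious modification when $\alpha=\beta$), so that $K'_{\alpha,\beta,\gamma}(z)=\Psi_{\alpha,\beta,\gamma}(z)/z$, whence $zK'_{\alpha,\beta,\gamma}(z)=\Psi_{\alpha,\beta,\gamma}(z)$; consequently $g=zf'$ is a rotation $\overline{\mu}\,\Psi_{\alpha,\beta,\gamma}(\mu z)$ of $\Psi_{\alpha,\beta,\gamma}$ exactly when $f'(z)=K'_{\alpha,\beta,\gamma}(\mu z)$, i.e. when $f=\overline{\mu}\,K_{\alpha,\beta,\gamma}(\mu\,\cdot)$ is a rotation of $K_{\alpha,\beta,\gamma}$.

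Applying Corollary~\ref{corollary1} to $g=zf'$, the subordination $g(z)/z\prec\Psi_{\alpha,\beta,\gamma}(z)/z$ becomes $f'(z)\prec K'_{\alpha,\beta,\gamma}(z)$, which is the last assertion of the present corollary. Write $f'(z)=K'_{\alpha,\beta,\gamma}(\omega(z))$ with $\omega$ a Schwarz function, so $|\omega(z)|\le|z|=r$. Since $K'_{\alpha,\beta,\gamma}$ is, up to an exponent, the M\"obius map $z\mapsto(1-\beta z)/(1-\alpha z)$ (and the exponential of a M\"obius map when $\alpha=\beta$), both $|K'_{\alpha,\beta,\gamma}|$ and $\Arg K'_{\alpha,\beta,\gamma}$ attain their extrema over $\overline{\mathbb{D}}_r$ at the real points $\pm r$; the maximum and minimum modulus principles, together with the maximum principle for the harmonic function $\Arg K'_{\alpha,\beta,\gamma}$, then give $K'_{\alpha,\beta,\gamma}(-r)\le|f'(z)|\le K'_{\alpha,\beta,\gamma}(r)$ and $|\Arg f'(z)|\le\max_{|w|=r}\Arg K'_{\alpha,\beta,\gamma}(w)$, with equality at a non-zero point forcing $\omega(z)=\mu z$, hence $f=\overline{\mu}K_{\alpha,\beta,\gamma}(\mu\,\cdot)$. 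Integrating $|f'|$ along $[0,z]$ and using that $K'_{\alpha,\beta,\gamma}$ is increasing on $(-1,1)$ yields $|f(z)|\le\int_0^rK'_{\alpha,\beta,\gamma}(t)\,dt=K_{\alpha,\beta,\gamma}(r)$.

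The sharp lower bound $|f(z)|\ge-K_{\alpha,\beta,\gamma}(-r)$ and the covering of $\set{|w|\le-K_{\alpha,\beta,\gamma}(-1)}$ are where the normalization $\gamma\ge\mathcal{T}_0(\alpha,\beta)$ imposed in Definition~\ref{def1} is essential: solving $1+\mathfrak{L}_0(\alpha,\beta,\gamma)\ge0$ in each of the two cases of Corollary~\ref{Cor1_Th-Re} shows that this is precisely the inequality $\gamma\ge\mathcal{T}_0(\alpha,\beta)$, so by \eqref{eq_def_main1} we have $\Re\set{1+zf''(z)/f'(z)}>0$, i.e. $\mathcal{CV}_{snail}(\alpha,\beta,\gamma)\subset\mathcal{CV}$ and $f(\mathbb{D})$ is convex. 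Then, for $|z_0|=r$ at which $|f|$ is minimal on $|z|=r$, the segment $[0,f(z_0)]$ lies in $f(\mathbb{D})$; pulling it back to an arc $\Gamma$ from $0$ to $z_0$ in $\mathbb{D}$ and using the distortion estimate along $\Gamma$,
\[
|f(z_0)|=\int_{\Gamma}|f'(\zeta)|\,|d\zeta|\ \ge\ \int_0^rK'_{\alpha,\beta,\gamma}(-\rho)\,d\rho\ =\ -K_{\alpha,\beta,\gamma}(-r).
\]
Letting $r\to1^-$ gives $f(\mathbb{D})\supset\set{|w|<-K_{\alpha,\beta,\gamma}(-1)}$, and the passage to the closed disc, together with the alternative that $f$ is a rotation of $K_{\alpha,\beta,\gamma}$, follows from the Ma--Minda dichotomy of \cite{MM}, since $K_{\alpha,\beta,\gamma}$ and its rotations are the only functions in the class whose image has a boundary point at distance exactly $-K_{\alpha,\beta,\gamma}(-1)$ from the origin. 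I expect this last paragraph to be the only genuinely non-routine part of the argument; the growth, distortion, argument and subordination statements are a mechanical transcription of Corollary~\ref{corollary1} through the substitution $g=zf'$.
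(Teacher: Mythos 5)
Your proposal is correct and follows essentially the route the paper intends: the paper gives no proof of this corollary, presenting it (like Corollary \ref{corollary1}) as a direct application of the Ma--Minda theorems \cite{MM} through the Alexander relation $f\in \mathcal{CV}_{snail}(\alpha,\beta,\gamma)\Leftrightarrow zf'\in \mathcal{ST}_{snail}(\alpha,\beta,\gamma)$ stated just before Proposition \ref{Ex}, which is exactly the reduction you carry out, and your identity $zK'_{\alpha,\beta,\gamma}=\Psi_{\alpha,\beta,\gamma}$ is the correct one (it follows from the integral definition in \eqref{eq_K_alpha_n} and in fact exposes a sign slip in the closed form \eqref{eq_K_alpha}, whose exponent should read $\tfrac{2(1-\gamma)}{\beta-\alpha}$). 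Only one harmless inaccuracy: $\Arg K'_{\alpha,\beta,\gamma}$ does not attain its extrema over $|z|\le r$ at the real points $\pm r$ (it vanishes there), but this does not affect your argument, since the bound you actually derive is the stated $\max_{|w|=r}\Arg K'_{\alpha,\beta,\gamma}(w)$.
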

\begin{thm}\label{th:twoinequality}Let $-1 < \alpha<\beta< 1$. If $f\in \mathcal{ST}_{snail}(\alpha,\beta,\gamma)$, then
\begin{enumerate}\rm
     \item\label{P_1}
          $
          \Re\set{\dfrac{f(z)}{z}}>\myp{\dfrac{1+\alpha}{1+\beta}}^{\frac{2-2\gamma}{\beta-\alpha}}\quad\textit{for}\quad
           \mathcal{T}_0(\alpha,\beta)\le 1-\dfrac{\beta-\alpha}{2} \le \gamma\ (z\in
          \mathbb{D}),$
           \item\label{P_11}
          $
          \Re\set{\dfrac{f(z)}{z}}^{\frac{\beta-\alpha}{2-2\gamma}}>\dfrac{1+\alpha}{1+\beta}\quad  (z \in \mathbb{D}),$
          \item\label{P_2}
     $ \left|{\Arg}\set{\dfrac{f(z)}{z}}\right|\le\dfrac{2(1-\gamma)}{\beta-\alpha}
     \sin^{-1}\myp{\dfrac{|z|(\beta-\alpha)}{1-|z|^2\alpha\beta}}\quad  (z \in \mathbb{D}).$
     \end{enumerate}
\end{thm}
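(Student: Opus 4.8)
The plan is to derive all three estimates from the single subordination $f(z)/z\prec\Psi_{\alpha,\beta,\gamma}(z)/z$ recorded in Corollary~\ref{corollary1}, combined with Lemma~\ref{inequalityreal} and an explicit description of the range of an auxiliary M\"obius map. Set $q=\frac{2(1-\gamma)}{\beta-\alpha}>0$; by \eqref{eq_psi_alpha} one has $\Psi_{\alpha,\beta,\gamma}(z)/z=\bigl(\frac{1-\alpha z}{1-\beta z}\bigr)^{q}$, so the subordination furnishes a Schwarz function $\omega$ (analytic in $\mathbb{D}$, $\omega(0)=0$, $|\omega(z)|\le|z|$) with
\[
\frac{f(z)}{z}=\left(\frac{1-\alpha\,\omega(z)}{1-\beta\,\omega(z)}\right)^{q}\qquad(z\in\mathbb{D}),
\]
the power taken as the analytic branch equal to $1$ at the origin; equivalently $\log\frac{f(z)}{z}=q\,\log\frac{1-\alpha\omega(z)}{1-\beta\omega(z)}$ for the branches of the logarithm that vanish at $0$.

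The second step is to pin down the geometry of $\phi(\zeta)=\frac{1-\alpha\zeta}{1-\beta\zeta}$ on the subdisk $\{|\zeta|\le r\}$, $0<r<1$. The decomposition $\phi(\zeta)=\frac{\alpha}{\beta}+\frac{\beta-\alpha}{\beta}\cdot\frac{1}{1-\beta\zeta}$ (the degenerate case $\beta=0$ being checked directly) shows that $\phi$ maps $\{|\zeta|\le r\}$ onto the closed disk $\overline{\Delta}_r$ with centre $c_r=\frac{1-\alpha\beta r^{2}}{1-\beta^{2}r^{2}}$ and radius $\rho_r=\frac{r(\beta-\alpha)}{1-\beta^{2}r^{2}}$. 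Since $c_r-\rho_r=\frac{(1-\beta r)(1+\alpha r)}{1-\beta^{2}r^{2}}=\frac{1+\alpha r}{1+\beta r}>0$, the disk $\overline{\Delta}_r$ sits in the right half-plane, and for every $w\in\overline{\Delta}_r$,
\[
\Re w\ge\frac{1+\alpha r}{1+\beta r}>\frac{1+\alpha}{1+\beta}\qquad\text{and}\qquad|\arg w|\le\sin^{-1}\!\Bigl(\tfrac{\rho_r}{c_r}\Bigr)=\sin^{-1}\!\Bigl(\tfrac{r(\beta-\alpha)}{1-\alpha\beta r^{2}}\Bigr),
\]
the first strict inequality holding because $r\mapsto\frac{1+\alpha r}{1+\beta r}$ is decreasing (as $\alpha<\beta$) and $r<1$, and the argument bound coming from the tangent lines to $\partial\Delta_r$ issuing from the origin.

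With $z\in\mathbb{D}$ fixed, $r=|z|$, and $w:=\frac{1-\alpha\omega(z)}{1-\beta\omega(z)}\in\overline{\Delta}_r$, the three assertions would follow at once. For \eqref{P_1}: the hypothesis $\gamma\ge1-\frac{\beta-\alpha}{2}$ is exactly $q\le1$, so, $\Re w$ being positive, Lemma~\ref{inequalityreal} gives $\Re\{f(z)/z\}=\Re\{w^{q}\}\ge(\Re w)^{q}\ge\bigl(\tfrac{1+\alpha r}{1+\beta r}\bigr)^{q}>\bigl(\tfrac{1+\alpha}{1+\beta}\bigr)^{q}$, using that $x\mapsto x^{q}$ is increasing on $(0,\infty)$. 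For \eqref{P_11}: here $1/q=\frac{\beta-\alpha}{2-2\gamma}$, and since $\log\frac{f(z)}{z}=q\log w$, analytic continuation from the origin gives $\bigl(f(z)/z\bigr)^{1/q}=w$, whence $\Re\{(f(z)/z)^{1/q}\}=\Re w\ge\frac{1+\alpha r}{1+\beta r}>\frac{1+\alpha}{1+\beta}$. For \eqref{P_2}: similarly $\Arg\{f(z)/z\}=\Im\log\frac{f(z)}{z}=q\,\Im\log w=q\arg w$ (the principal value, as $w$ lies in the right half-plane), so the argument estimate of the previous paragraph yields $|\Arg\{f(z)/z\}|\le q\,\sin^{-1}\!\bigl(\tfrac{|z|(\beta-\alpha)}{1-\alpha\beta|z|^{2}}\bigr)$, which is the claim since $q=\frac{2(1-\gamma)}{\beta-\alpha}$. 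The one genuinely delicate point is the branch bookkeeping --- confirming that $\bigl(f(z)/z\bigr)^{1/q}$ and $\Arg\{f(z)/z\}$ really correspond to $w$ and $q\arg w$ --- which I would settle by analytic continuation using that $w(z)$ never leaves the right half-plane; everything else is the elementary geometry of $\Delta_r$ together with Lemma~\ref{inequalityreal}.
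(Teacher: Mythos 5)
Your proof is correct and follows essentially the same route as the paper: subordination of $f(z)/z$ to $\Psi_{\alpha,\beta,\gamma}(z)/z=\bigl(\frac{1-\alpha z}{1-\beta z}\bigr)^{q}$ via Corollary \ref{corollary1}, the explicit disk image of the M\"obius map $\zeta\mapsto\frac{1-\alpha\zeta}{1-\beta\zeta}$ on $|\zeta|\le r$ (same centre $\frac{1-\alpha\beta r^2}{1-\beta^2r^2}$ and radius $\frac{r(\beta-\alpha)}{1-\beta^2r^2}$), and Lemma \ref{inequalityreal} for part \eqref{P_1}. If anything, your version is slightly more careful than the paper's, since you make the Schwarz function and the branch of the power explicit rather than comparing $\Re\{f(z)/z\}$ directly with $\Re\{\Psi_{\alpha,\beta,\gamma}(z)/z\}$ at the same point.
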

 \begin{proof}
 \noindent Let  $q:=\frac{2(1-\gamma)}{\beta-\alpha}$.\\
 \underline{\rm Case \ref{P_1}}.
 From $1-\frac{\beta-\alpha}{2} \le \gamma$ it follows that  $0<(2-2\gamma)/(\beta-\alpha)\le1$, and from $f\in \mathcal{ST}_{snail}(\alpha,\beta,\gamma)$ it follows that $\mathcal{T}_0\le 1-\frac{\beta-\alpha}{2}$. Then, making use  Corollary \ref{corollary1} and Lemma \ref{inequalityreal}, we conclude that
\begin{align*}
\Re\set{\frac{f(z)}{z}}>\Re\set{\frac{\Psi_{\alpha,\beta,\gamma}(z)}{z}}=\Re\set{\myp{\frac{1-\alpha z}{1-\beta z}}^{q}}
\ge\set{\Re\myp{\frac{1-
     \alpha z}{1- \beta z}}}^{q}>\myp{\frac{1+\alpha}{1+\beta}}^{q}.
\end{align*}
The function $\psi_{\alpha,\gamma,\mu}$ given by
\eqref{eq_psi_alpha_lambda}, shows that the bound is the best possible.\\
\underline{\rm Case \ref{P_11}}. From Corollary \ref{corollary1} we have
\[
\left[\frac{f(z)}{z}\right]^{1/q}\prec \left[\frac{\Psi_{\alpha,\beta,\gamma}(z)}{z}\right]^{1/q}.
\]
Thus
\[
\Re\set{\frac{f(z)}{z}}^{1/q}>\Re\set{\frac{1-\alpha z}{1-\beta z}}>\frac{1+\alpha}{1+\beta}.
\]
\underline{\rm Case  \ref{P_2}}.  By Corollary \ref{corollary1}  it is enough to consider ${\Arg}\set{{\Psi_{\alpha,\beta,\gamma}(z)}/{z}}$.  Since the image of the disk
 $\set{z\in \mathbb{C}\colon |z|\le r}$ by the function
 $w={\Psi_{\alpha,\beta,\gamma}(z)}/{z}$ or
 $w^{1/q}=\myp{1-\alpha z}/\myp{1-\beta z}$   is contained in
 closed disc with center $\myp{1-\alpha\beta r^2}/\myp{1-\beta^2 r^2}$  and  radius
 $\myp{r(\beta-\alpha)}/\myp{1-\beta^2 r^2}$. Therefore
\begin{equation*}
\left|w^{1/q}-\frac{1-\alpha\beta r^2}{1-\beta^2 r^2}\right|\le
\frac{r(\beta-\alpha)}{1-\beta^2 r^2}\quad \textit{and}\quad \left|{\Arg}\,w^{1/q}\right|<\frac{\pi}{2}.
\end{equation*}
Thus
\[
\left|{\Arg}\,w^{1/q}\right|\le \sin^{-1}\left(\frac{r(\beta-\alpha)}{1-r^2\alpha\beta}\right).
\]
The proof is now complete.
\end{proof}
It is clear  that  $f(z)\in \mathcal{CV}_{snail}(\alpha,\beta,\gamma)$ if and only if  $zf'(z)\in
\mathcal{ST}_{snail}(\alpha,\beta,\gamma)$. Using the same notation and the same
reasoning as in the
proof of Theorem \ref{th:twoinequality} we have the  following Corollary.
\begin{cor}\label{th:twoinequality2}Let $-1 < \alpha<\beta< 1$. If $f\in \mathcal{CV}_{snail}(\alpha,\beta,\gamma)$, then
\begin{enumerate}\rm
 \item
  $
          \Re\set{f'(z)}>\myp{\dfrac{1+\alpha}{1+\beta}}^{\frac{2-2\gamma}{\beta-\alpha}}\quad\textit{for}\quad
          \mathcal{T}_0(\alpha,\beta)\le 1-\dfrac{\beta-\alpha}{2} \le \gamma\ (z\in
          \mathbb{D}),$
           \item
          $
          \Re\set{f'(z)}^{\frac{\beta-\alpha}{2-2\gamma}}>\dfrac{1+\alpha}{1+\beta}\quad (z \in \mathbb{D}),$
          \item
     $ \abs{{\Arg}\set{f'(z)}}\le\dfrac{2(1-\gamma)}{\beta-\alpha}
     \sin^{-1}\left(\dfrac{|z|(\beta-\alpha)}{1-|z|^2\alpha\beta}\right)\quad (z \in \mathbb{D}).$
     \end{enumerate}
\end{cor}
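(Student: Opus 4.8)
The plan is to exploit the Alexander-type relation between the two families that was recorded just before the statement: $f\in\mathcal{CV}_{snail}(\alpha,\beta,\gamma)$ if and only if $g(z):=zf'(z)\in\mathcal{ST}_{snail}(\alpha,\beta,\gamma)$. This is immediate, since for $g=zf'$ one has $zg'(z)/g(z)=1+zf''(z)/f'(z)$, so that the defining subordination $1+zf''(z)/f'(z)\prec\mathcal{T}_{\alpha,\beta,\gamma}$ of $\mathcal{CV}_{snail}$ is literally the defining subordination $zg'(z)/g(z)\prec\mathcal{T}_{\alpha,\beta,\gamma}$ of $\mathcal{ST}_{snail}$.

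Given $f\in\mathcal{CV}_{snail}(\alpha,\beta,\gamma)$, I would therefore set $g=zf'$ and apply Theorem~\ref{th:twoinequality} to $g$. Since $g(z)/z=f'(z)$, the three conclusions of that theorem for $g$ transcribe directly into the three assertions for $f$: the estimate $\Re\{g(z)/z\}>\bigl((1+\alpha)/(1+\beta)\bigr)^{(2-2\gamma)/(\beta-\alpha)}$, valid under the side condition $\mathcal{T}_0(\alpha,\beta)\le 1-(\beta-\alpha)/2\le\gamma$, yields item (1); the estimate for $\Re\{g(z)/z\}^{(\beta-\alpha)/(2-2\gamma)}$ yields item (2); and the argument bound $\abs{\Arg\{g(z)/z\}}\le\frac{2(1-\gamma)}{\beta-\alpha}\sin^{-1}\bigl(|z|(\beta-\alpha)/(1-|z|^2\alpha\beta)\bigr)$ yields item (3).

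No genuine difficulty arises here: the whole content is the equivalence of the first paragraph followed by a verbatim substitution into Theorem~\ref{th:twoinequality}, whose hypotheses ($-1<\alpha<\beta<1$, plus the constraint on $\gamma$ in part (1), which guarantees $0<\frac{2(1-\gamma)}{\beta-\alpha}\le 1$) coincide with those of the Corollary. The only thing worth noting is that strictness $\alpha<\beta$ is needed so that the exponent $q=\frac{2(1-\gamma)}{\beta-\alpha}$ is well defined and positive; this is assumed. For sharpness it suffices to integrate the extremal rotations $\psi_{\alpha,\gamma,\mu}$ of $\Psi_{\alpha,\beta,\gamma}$ appearing in the proof of Theorem~\ref{th:twoinequality}, which produces the functions $\overline{\mu}K_{\alpha,\beta,\gamma}(\mu z)$ with $\abs{\mu}=1$ as the extremal functions for the present inequalities.
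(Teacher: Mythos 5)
Your proof is correct and is exactly the paper's argument: the paper likewise invokes the equivalence $f\in \mathcal{CV}_{snail}(\alpha,\beta,\gamma)$ if and only if $zf'(z)\in \mathcal{ST}_{snail}(\alpha,\beta,\gamma)$ and then transfers the conclusions of Theorem~\ref{th:twoinequality} to $g=zf'$, using $g(z)/z=f'(z)$. Nothing is missing.
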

Condition $ \mathcal{T}_0(\alpha,\beta)\le 1-\dfrac{\beta-\alpha}{2}\le \gamma$ in Theorem \ref{th:twoinequality} for  requirement $\beta=-\alpha$ are equivalent to conditions $1-\sqrt{2}\le\alpha<0$, $\gamma\ge 1+\alpha$, and so we have the following.
\begin{cor} For $1-\sqrt{2}\le\alpha<0$ and  $\gamma\ge 1+\alpha$, we have:
\[
f\in \mathcal{ST}_{snail}(\alpha,-\alpha,\gamma) \Longrightarrow		\Re\set{\dfrac{f(z)}{z}}>\myp{\dfrac{1-\alpha}{1+\alpha}}^{\frac{1-\gamma}{\alpha}}\quad (z\in \mathbb{D}),\]
			and
			\[
	f\in \mathcal{CV}_{snail}(\alpha,-\alpha,\gamma) \Longrightarrow			\Re\set{f'(z)}>\myp{\dfrac{1-\alpha}{1+\alpha}}^{\frac{1-\gamma}{\alpha}}\quad	(z\in \mathbb{D}).\]
\end{cor}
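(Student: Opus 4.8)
The plan is to read off the Corollary directly from Theorem~\ref{th:twoinequality} and Corollary~\ref{th:twoinequality2} by putting $\beta=-\alpha$; the only work is to check that, under this specialization, the abstract hypothesis $\mathcal{T}_0(\alpha,\beta)\le 1-\tfrac{\beta-\alpha}{2}\le\gamma$ collapses to the stated pair of conditions and that the abstract bound $\bigl(\tfrac{1+\alpha}{1+\beta}\bigr)^{(2-2\gamma)/(\beta-\alpha)}$ collapses to $\bigl(\tfrac{1-\alpha}{1+\alpha}\bigr)^{(1-\gamma)/\alpha}$.

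For the hypotheses I would use $\beta-\alpha=-2\alpha$, so that $1-\tfrac{\beta-\alpha}{2}=1+\alpha$ and the inequality $1-\tfrac{\beta-\alpha}{2}\le\gamma$ is exactly $\gamma\ge1+\alpha$. Since $\alpha\beta=-\alpha^{2}<0$ the pair $(\alpha,-\alpha)$ is not in $B_{1}$, so \eqref{eq_Con_RE} gives $\mathcal{T}_0(\alpha,-\alpha)=1-\tfrac{(1+\alpha)(1-\alpha)}{2}=\tfrac{1+\alpha^{2}}{2}$ (the value already recorded for $\mathcal{T}_0(\alpha,-\alpha)$ in Definition~\ref{def1}); hence $\mathcal{T}_0(\alpha,-\alpha)\le 1+\alpha$ is equivalent to $\alpha^{2}-2\alpha-1\le0$, i.e. to $1-\sqrt2\le\alpha\le1+\sqrt2$, which together with $-1<\alpha<0$ (forced by $\alpha<\beta=-\alpha$) is exactly $1-\sqrt2\le\alpha<0$. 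This is the equivalence noted just above the Corollary. For the bound I would compute, again with $\beta=-\alpha$,
\[
\myp{\frac{1+\alpha}{1+\beta}}^{\frac{2-2\gamma}{\beta-\alpha}}=\myp{\frac{1+\alpha}{1-\alpha}}^{-\frac{1-\gamma}{\alpha}}=\myp{\frac{1-\alpha}{1+\alpha}}^{\frac{1-\gamma}{\alpha}}.
\]

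With these two identities in hand, the first assertion is Theorem~\ref{th:twoinequality}, Case~\ref{P_1}, applied to $(\alpha,-\alpha,\gamma)$, and the second is the first part of Corollary~\ref{th:twoinequality2} (which is the same inequality with $zf'$ in place of $f$, via $f\in\mathcal{CV}_{snail}(\alpha,\beta,\gamma)\iff zf'\in\mathcal{ST}_{snail}(\alpha,\beta,\gamma)$). I do not expect any genuine obstacle; the one point to watch is the sign bookkeeping in the exponent $\tfrac{2-2\gamma}{\beta-\alpha}$, since $\alpha<0$ makes $\beta-\alpha=-2\alpha>0$ and hence the exponent positive (and $\le1$ precisely when $\gamma\ge1+\alpha$), so that both the use of Lemma~\ref{inequalityreal} inside the proof of Theorem~\ref{th:twoinequality} and the identity $X^{-c}=(1/X)^{c}$ invoked for the bound are applied to positive real quantities.
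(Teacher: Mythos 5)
Your proposal is correct and follows exactly the paper's route: the paper derives this Corollary by the same specialization $\beta=-\alpha$ of Theorem~\ref{th:twoinequality} and Corollary~\ref{th:twoinequality2}, noting (in the sentence immediately preceding the statement) that the hypothesis $\mathcal{T}_0(\alpha,\beta)\le 1-\tfrac{\beta-\alpha}{2}\le\gamma$ becomes $1-\sqrt{2}\le\alpha<0$ and $\gamma\ge 1+\alpha$. Your computations of $\mathcal{T}_0(\alpha,-\alpha)=\tfrac{1+\alpha^2}{2}$ and of the exponent and base of the bound, including the sign bookkeeping for $\alpha<0$, all check out.
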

\noindent\textbf{Acknowledgments}

\noindent  The authors thank the editor and the anonymous referees for constructive and pertinent suggestions.\bigskip

\noindent\textbf{Availability of supporting data}

\noindent Not applicable.\bigskip

\noindent\textbf{Competing Interests}

\noindent The authors declare that they have no competing interests.\bigskip

\noindent\textbf{Funding}

\noindent This work was partially supported by the Center for Innovation and Transfer of Natural Sciences and Engineering
Knowledge, Faculty of Mathematics and Natural Sciences, University of Rzeszow.\bigskip

\noindent\textbf{Authors' Contributions}

\noindent Each of the authors contributed to each part of this study equally, all authors read and approved the final manuscript.


\end{document}